\documentclass[a4paper,10pt,twoside]{amsart}
\usepackage[utf8]{inputenc}
\usepackage{indentfirst}
\usepackage{fancyhdr}
\usepackage{amssymb}
\usepackage{amsmath}
\usepackage{amsthm}
\usepackage{latexsym}
\usepackage{eucal}
\usepackage{pdfpages}
\usepackage{setspace}
\usepackage{braket}
\usepackage{version}
\usepackage{verbatim}
\usepackage{bbm}
\usepackage{subfig}
\usepackage{xcolor}

\usepackage{dsfont}

\usepackage[a4paper,top=3.5cm,bottom=3.5cm,left=3.5cm,right=3.5cm]{geometry}

\usepackage{mathtools}

\DeclareMathAlphabet{\mathcal}{OMS}{cmsy}{m}{n}

\numberwithin{equation}{section}

\theoremstyle{plain}
\newtheorem{theorem}{Theorem}[section]
\newtheorem{lemma}[theorem]{Lemma}
\newtheorem{proposition}[theorem]{Proposition}

\theoremstyle{definition}
\newtheorem{remark}[theorem]{Remark}

\newtheorem*{namedthm*}{\namedthmname}

\renewcommand{\div}{\mathrm{div \ }}

\newcommand{\uNe}{u_{<N/8}}
\newcommand{\huNe}{\hat{u}_{<N/8}}
\newcommand{\fNe}{f_{<N/8}}
\newcommand{\hfNe}{\hat{f}_{<N/8}}
\newcommand{\AfNe}{(Af)_{<N/8}}

\newcommand{\hphi}{\widehat{\phi}}

\newcommand{\hf}{\hat{f}}
\newcommand{\hu}{\hat{u}}

\newcommand{\scalar}[2]{\langle #1, #2 \rangle}
\newcommand{\Scalar}[2]{\big\langle #1, #2 \big\rangle}

\newcommand{\floor}[1]{\lfloor #1 \rfloor}

\newcommand{\dt}{\partial_t}
\newcommand{\Dt}{\frac{d}{dt}}
\newcommand{\dx}{\partial_x}
\newcommand{\dy}{\partial_y}
\newcommand{\dz}{\partial_z}

\newcommand{\dyy}{\partial_{yy}}
\newcommand{\dzz}{\partial_{zz}}

\newcommand{\dtau}{\partial_{\tau}}

\newcommand{\norma}[2]{\lVert #1 \rVert_{#2}}
\newcommand{\Norma}[2]{\big\lVert #1 \big\rVert_{#2}}
\newcommand{\Normabigg}[2]{\bigg\lVert #1 \bigg\rVert_{#2}}

\setcounter{tocdepth}{1}
\usepackage{etoolbox}
\patchcmd{\section}{\scshape}{\bfseries}{}{}
\makeatletter
\renewcommand{\@secnumfont}{\bfseries}
\makeatother

\author[M. Dolce]{Michele Dolce}
\title[Nonlinear inviscid damping for zero mean perturbation of the Couette flow]{Nonlinear inviscid damping for zero mean perturbation of the 2D Euler Couette flow}
\subjclass[2010]{76E05, 35Q35, 35Q31} 
\keywords{Couette flow, Inviscid damping, Hydrodynamic stability, Echoes}
\address{\textsc{GSSI-Gran Sasso Science Institute}, Viale F. Crispi 7, L'Aquila, 67100, IT}
\email{michele.dolce@gssi.it}

\makeatletter
\let\newauthor\@author
\let\runtitle\@title
\makeatother

\usepackage{authoraftertitle}

\pagestyle{fancy}
\fancyhf{}
\fancyhead[LE,RO]{\thepage}
	\fancyhead[CO]{\textsc{M. Dolce}}
\fancyhead[CE]{\textsc{Zero mean perturbation of the Couette flow}}

	\begin{document}
		\maketitle
 \vspace{-0.8cm}
		\begin{abstract}
			In this note we revisit the proof of Bedrossian and Masmoudi \cite{BM} about the inviscid damping of planar shear flows in the 2D Euler equations under the assumption of \textit{zero mean perturbation}. We prove that a small perturbation to the 2D Euler Couette flow in $\mathbb{T}\times \mathbb{R}$ strongly converge to zero, under the additional assumption that the average in $x$ is always zero. In general the mean is not a conserved quantity for the nonlinear dynamics, for this reason this is a particular case. Nevertheless our assumption allow the presence of \textit{echoes} in the problem, which we control by an approximation of the weight built in \cite{BM}. The aim of this note is to present the mathematical techniques used in \cite{BM} and can be useful as a first approach to the nonlinear inviscid damping. 
		\end{abstract}
		\tableofcontents
		\section{Introduction}
		Consider the 2D Euler equation written in vorticity form,
		\begin{align}
		\label{2deuler}&\dt \widetilde{\omega}+v\cdot \nabla \widetilde{\omega}=0, \ \ \ \text{in } \mathbb{T}\times \mathbb{R},
		\end{align}
		where $v$ is the velocity and $\widetilde{\omega}=\operatorname{curl}(v)$ is the vorticity. The Couette flow, given by $v=(y,0)^T$ and $\widetilde{\omega}=-1$, it is a steady state for \eqref{2deuler}. We are interested in perturbations of the form $v=(y,0)^T+(U^x,U^y)^T$ with total vorticity $\widetilde{\omega}=\omega-1$, where $\omega=\operatorname{curl}(U)$. The equations for a perturbation around the Couette flow are given by
		\begin{equation}
		\label{eq:vort}
		\begin{split}
		&\dt \omega+y\dx \omega +U\cdot \nabla \omega=0, \ \ \ \text{in } \mathbb{T}\times \mathbb{R}, \\
		&U=\nabla^{\perp}\psi,\\
		&\Delta \psi= \omega,
		\end{split}
		\end{equation}
		 where $\psi$ is the stream function associated to $U$.\\ \indent
		We provide a proof of the nonlinear \textit{inviscid damping} result obtained by Bedrossian and Masmoudi in \cite{BM}, under the additional assumption that $\int_\mathbb{T} U^x(t)dx=0$ for all times $t$. Let us remark that this condition does not hold true for general solutions to \eqref{eq:vort} and we comment more later on. \\ \indent 
		The purpose of this note is to familiarize with the techniques used in \cite{BM} and to show some of the difficulties of this problem. Event though several simplifications due to our hypothesis the phenomenon of \textit{echoes} is still present, that has analogies with the Landau damping setting, see \cite{bedrossian2016landau,MV}.\\ \indent
		With inviscid damping, roughly speaking, we mean that $U$ will converge to zero even without viscosity, in particular one has something like $\displaystyle \norma{U}{L^2}\leq\epsilon(1+t)^{-1}$, where $\epsilon$ is related to the initial viscosity. For a detailed and clear discussion on the problem we refer to \cite{bedrossianGermainMasmoudi,bedrossian2013asymptotic,BM} and references therein. \\ \indent
		The techniques used here and the main ideas of the proof are the same of \cite{BM}, for this reason we also keep some notation used in the original paper in order to simplify the reading of the latter and related works of Bedrossian, Masmoudi and coauthors see \cite{bedrossian2017stability,bedrossian2016landau,bedrossian2018sobolev,deng2018long}. In particular we will try to highlight when our assumption plays a role by referring always to \cite{BM}.\\
		 Let us also mention the previous work of Lin and Zeng \cite{lin2011inviscid}, which provide an example of instability, and the recent work of Ionescu and Jia \cite{ionescu2018inviscid}, where with the techniques introduced in \cite{BM}, they prove nonlinear inviscid damping in a periodic channel, i.e. $\mathbb{T}\times [-1,1]$, for a perturbation to Couette, in the critical regularity requirement of \cite{BM}, namely $s=1/2$, and controlling the boundary effect on the stream function. They have to assume the initial vorticity to be compactly supported inside the channel, which guarantees that the support of the vorticity will remain inside, as pointed out also in \cite[Remark 5]{BM} for the domain $\mathbb{T}\times \mathbb{R}$. For other type of shear flows, also in the viscous setting, we refer to \cite{bedrossian2017enhanced,zelati2018relation,zelati2019enhanced,lin2004nonlinear,zillinger2016linear,zillinger2017linear}. For perturbations around vortex structures we mention \cite{bedrossian2017vortex,coti2019degenerate,gallay2005global}.\\ \\ \indent
		Let us comment more on our assumption. Since the average in $x$ is not conserved in the nonlinear setting, one has to deal with the zero mode, namely $U_0(t,y)=\int_{\mathbb{T}}U dx$, which gives another shearing component in addition to the one given by the underlying shear flow $(y,0)^T$. In fact thanks to $\div U=0$, one knows that $\int_\mathbb{T} U^ydx =const.$, and so the zero mode of the velocity can be chosen as $U_0(t)=(U_0(t,y),0)^T$.\\ \indent
		The presence of the zero mode implies additional difficulties in the proof of inviscid damping, which are not only mathematical ones. In fact $U_0$ has a relevant influence in the dynamics, in particular the total flow will not converge exactly to the Couette one but it will goes somehow sufficiently 'close', which means that there exist a shear flow $U_\infty=(U_\infty(y),0)$, with (heuristically) $|U_\infty'|\approx 1$, $|U_\infty''|\approx 0$, such that $\norma{U-U_\infty}{L^2}\leq \epsilon(1+t)^{-1}$. The scattering profile is determined by the evolution of the zero mode, for this reason we cannot hope to recover the real scattering in our setting. This is one of the reason why we have to consider our assumption as a sort of toy case. \\ \indent
	    From the mathematical point of view, assuming $U_0=0$ simplifies the proof of the nonlinear inviscid damping. In particular the major simplifications are due to the fact that we can perform estimates in the moving frame generated by following only the background Couette flow. In the general case of $U_0\neq 0$, since the background shear flow depends on the solution itself, the estimates in the moving frame are more involved. In particular a major problem is to translate estimates from the moving frame to the original one. Indeed one needs to ensure that the change of variables is sufficiently smooth. This requires a control also in derivatives of $U_0$. Instead with our assumption, the change of variables is straightforward, hence we have less terms to control.\\ \\ \indent
		As said before, the main ideas and techniques are the same of \cite{BM}. In particular a key point of \cite{BM} is the construction of a proper weight, here we construct the weight in slightly different way, not as sharp as in \cite{BM}, because we want to give an intuitive qualitative behaviour of it. Also we give the proof of its properties in a more heuristic fashion to keep the discussion light, since the real proofs are essentially combinatorial.\\ \indent
		The proof of inviscid damping instead relies on paraproduct decomposition, where essentially we follow the step of \cite{BM}, trying, when possible, to simplify or rephrase some estimates in our setting. \\ \\ \indent
       The note is organized as follows, firstly we introduce some notation and convention that will be used throughout all the note. Then we give also a short discussion on the linear problem to show some of the main difficulties that one may expect.\\ \indent 
       In Section \ref{secstatement} we state the main theorem.\\ \indent
       In Section \ref{secproof}, first we define the main ingredients required and then we reduce the result in the proof of some propositions. Based on those propositions we prove the theorem thanks to bootstrap argument and all the rest of the note is dedicated in proving the propositions. 
       \subsection{Notation and convention}
       With the symbol $a\lesssim b$ we mean that $a\leq Cb$ for some constant $C\geq 1$ that may depend on given quantities. Analogously for $\gtrsim$. We denote $a\approx b$ if $C^{-1}b\leq a \leq Cb$.\\  
       	Define the set of dyadic integers as 
       \begin{equation*}
       \mathbf{D}=\bigg \{\frac{1}{2},1,2,...,2^j,...\bigg\},
       \end{equation*}
       and usually numbers on this set are denoted as $N,N'$. \\ \indent
       For a vector in $\mathbb{R}^{n}$, we denote $|v|=|v_1|+...+|v_n|$, and define the japanese bracket as 
       \begin{equation*}
       \langle v \rangle^2 =1+|v|^2.
       \end{equation*}
       Given two functions $f,g\in L^2(\mathbb{T}\times \mathbb{R})$, we denote the $L^2$ scalar product as $\scalar{f}{g}$.\\
       The Fourier transform of a function $f$ is defined as 
       \begin{equation*}
       \hat{f}_k(\eta)=\frac{1}{2\pi}\int_{\mathbb{T}\times \mathbb{R}}e^{-i(kx+\eta y)}f(x,y)dxdy,
       \end{equation*}
       and the inverse is given by 
       \begin{equation*}
       f(x,y)=\frac{1}{2\pi}\sum_{k\in \mathbb{Z}}\int_\mathbb{R}e^{i(kx+\eta y)}\hat{f}_k(\eta)d\eta.
       \end{equation*}
       We recall also some basic properties, 
       \begin{align*}
       \scalar{f}{g}=\scalar{\hat{f}}{\hat{g}},\\
       \widehat{fg}=\hat{f}*\hat{g},\\
       \big(\widehat{\nabla f}\big)_k(\eta)=(ik,i\eta)\hat{f}_k(\eta).
       \end{align*}
       We define the Gevrey-s norm, with $\sigma$-Sobolev correction, as 
       \begin{equation}
       \norma{f}{\mathcal{G^{\lambda,\sigma}}}^2=\sum_{k\in \mathbb{Z}}\int_\mathbb{R} e^{2\lambda|k,\eta|^s}\langle k,\eta \rangle^{2\sigma}|\hf|^2(k,\eta)d\eta.
       \end{equation}
       The space equipped with this norm is denoted by $\mathcal{G}^{\lambda,\sigma,s}$ and we say that $f\in \mathcal{G}^{\lambda,0,s}$ is a function of Gevrey-$1/s$ class. If $f\in \mathcal{G}^{\lambda,0,1}$, we have an analytic function where $\lambda$ is the radius of analiticity, so for $s<1$ it has an analogous meaning. If $f\in \mathcal{G}^{\lambda,0,0}$ then it is a smooth function.\\
       We will always omit the index $s$ and if $\sigma=0$ we omit also this index. \\ \indent
       Given two Fourier multiplier $B(\nabla)$, $C(\nabla)$, whose symbols are $B_k(\eta), C_k(\eta)$, and two functions $f,g$, we will denote 
       \begin{equation}
       \label{notationBC}
       \mathcal{F}\big(BfCg\big)_k(\eta)=\big(B\hf*C\hat{g}\big)_k(\eta) :=(B\circledast C)(\hat{f}*\hat{g})_k(\eta),
       \end{equation}
       and the notation is introduced somehow in analogy with tensor product of linear maps.\\ 
       When dealing with paraproducts, see \ref{appelitt}, when we have a term like 
       \begin{equation*}
       \big(h_{<N/8}*g_{N}\big)_k(\eta)=\sum_l \int_{\mathbb{R}}h_{k-l}(\eta-\xi)_{<N/8}g_l(\xi)_Nd\xi,
       \end{equation*}
       we always use the convention that $(l,\xi)$ is for high frequencies, namely $N$, and $(k-l,\eta-\xi)$ is for the low one, namely $<N/8$.
       \subsection{Short discussion on the linear problem}
       The linear problem related to \eqref{eq:vort}, is given by the following transport equation 
       \begin{equation}
       \label{eq:linomega}
       \dt \omega+ y\dx \omega=0,
       \end{equation}
       and the velocity $U$ is recovered again thanks to the Biot-Savart law $\eqref{eq:vort}_2$. Clearly the explicit solution is given by following the characteristics, so for example introduce the change of variable $(z,y)\rightarrow (x-yt,y)$ which changes the differential operators as follows 
       \begin{equation}
       \label{DeltaL}
      \begin{split}
       &\nabla_{x,y} \rightarrow (\dz,\dy-t\dz)^T:=\nabla_L,\\
       &\Delta_{x,y}\rightarrow\dzz+(\dy-t\dz)^2:=\Delta_L.
      \end{split}
       \end{equation} 
       So defining $f(t,z,y)=\omega(t,z+ty,y)$ one gets that $\dt f=0$, hence $f(t,z,y)=\omega_{in}(x,y)$. Calling $\tilde{u}(t,z,y)=U(t,z+ty,y)$, we can compute explicitly the velocity on the Fourier side. In fact assuming $\mathbf{k\neq 0}$ (in the linear setting the mode $k=0$ is conserved), there is no problem in defining $U=\nabla^\perp \Delta^{-1}\omega$, and so in the new variables we have
      \begin{align}
      \label{u1} \widehat{\tilde{u}}_{1,k}(t,\eta)=\frac{i(\eta-kt)}{k^2+(\eta-kt)^2}\hat{\omega}_{in}(k,\eta),\\
      \label{u2}\widehat{\tilde{u}}_{2,k}(t,\eta)=-\frac{ ik}{k^2+(\eta-kt)^2}\hat{\omega}_{in}(k,\eta).
      \end{align}
       Then just observe that,
       \begin{equation}
       \label{japbound}
       \langle \eta \rangle^2 \langle\eta-kt\rangle^2 \gtrsim \langle t\rangle^2,
       \end{equation} 
       and thanks to Plancharel and change of variables, one has 
       \begin{equation}
       \label{lininvdamp}
       \begin{split}
       &\norma{U^x}{L^2}\lesssim \frac{1}{\langle t \rangle }\norma{\omega_{in}}{H^1_{x,y}},\\
       &\norma{U^y}{L^2}\lesssim \frac{1}{\langle t \rangle^2 }\norma{\omega_{in}}{H^2_{x,y}},
       \end{split}
       \end{equation}
       and the last two inequalities means \textit{inviscid damping}, i.e. the possibility for the velocity to be damped even without viscosity. Also the equations in \eqref{u1}, \eqref{u2} are the key observation to see a possibility of a \textit{transient growth}, near the \textit{Orr's} critical time $t=\eta/k$, called like this because already Orr in the 1907, see \cite{orr1907stability}, noticed essentially this behaviour. Somehow a related mechanism of transient growths creates a lot of difficulties in the nonlinear case, where frequencies are not independent.\\
       Also in \eqref{lininvdamp} we see that one has to pay regularity to get a decay. \\ \indent
       For more discussion about the linear case, also for shear flows close to Couette and in the channel, we refer to \cite{zillinger2016linear,zillinger2017linear}.
       \section{Statement of the theorem} \label{secstatement}
       As suggested by the linear case, if one tries to perform a simple perturbative argument, any estimate will require to control at least two derivatives more. So due to the structure of the problem, it is natural to ask for an infinite regularity class, in particular Gevrey-$1/s$, where $s>1/2$. By now this restriction is seen to be somehow sharp, because Deng and Masmoudi, see \cite{deng2018long}, have proven that for $s<1/2$ there is instability created completely by a nonlinear mechanism. \\
       In our setting of zero $x$-average, the theorem of Bedrossian and Masmoudi, \cite[Theorem 1]{BM}, reads as follows.
       \begin{theorem}
       	\label{maintheorem}
       	Assume $\int_{\mathbb{T}}  U^xdx=0$ for all times. For all $1/2<s\leq1$, $\lambda_0>\lambda'>0$. Then there exists an $\epsilon_0(\lambda_0,\lambda',s)\leq 1/2$ such that for any $\epsilon<\epsilon_0$, if $\omega_{in}$ satisfies $\int \omega_{in} dx dy=0$, $\int|y\omega_{in}|dxdy<\epsilon$ and 
       	\begin{equation}
       	\norma{\omega_{in}}{\mathcal{G}^{\lambda_0}}\leq \epsilon,
       	\end{equation}
       	then it holds 
       	\begin{align}
       	\label{vorticity}
       	\norma{\omega(t,x+yt,y)}{\mathcal{G^{\lambda'}}}&\leq \epsilon,\\
       	\label{U1}\norma{U^x}{L^2}&\leq \frac{\epsilon}{\langle t \rangle},\\
       	\label{U2}\norma{U^y}{L^2}&\leq \frac{\epsilon}{\langle t \rangle^2}.
       	\end{align}
       \end{theorem}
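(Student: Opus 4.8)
The plan is to run the whole argument in the frame moving with the Couette flow and to reduce everything to a single weighted energy estimate closed by a continuity argument, exactly along the lines of \cite{BM}. \textbf{Setup.} First I would note that the standing hypothesis $\int_{\mathbb{T}}U^x\,dx=0$ forces $\dy\psi_0\equiv0$, hence the zero mode of the vorticity $\omega_0=\dyy\psi_0$ vanishes identically (consistently with $\int\omega_{in}=0$); thus the solution lives entirely in the sector $k\neq0$, where the Biot--Savart law and the formulas \eqref{u1}--\eqref{u2} make sense, with $\hat\omega_{in}$ replaced by $\hat f(t)$. Introducing $f(t,z,y)=\omega(t,z+ty,y)$ and $\psi_L(t,z,y)=\psi(t,z+ty,y)$ turns \eqref{eq:vort} into $\dt f+\nabla_L^\perp\psi_L\cdot\nabla_L f=0$, $\Delta_L\psi_L=f$, with $\nabla_L,\Delta_L$ as in \eqref{DeltaL}; the change of variables is volume preserving, so $\norma{U}{L^2}=\norma{\tilde u}{L^2}$ throughout. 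I would then consider the energy $\norma{Af}{L^2}^2$ for a Fourier multiplier of the form $A_k(t,\eta)=e^{\lambda(t)|k,\eta|^s}\langle k,\eta\rangle^\sigma/w_k(t,\eta)$, where $\lambda(t)$ decreases from just below $\lambda_0$ down to $\lambda'$ and $w$ is the echo-absorbing weight constructed below, tuned so that $-\dt A/A=|\dot\lambda|\,|k,\eta|^s+\dt w/w\ge0$. Differentiating and using the equation,
\begin{equation*}
\tfrac{1}{2}\Dt\norma{Af}{L^2}^2=-\mathrm{CK}_\lambda-\mathrm{CK}_w-\Scalar{A(\nabla_L^\perp\psi_L\cdot\nabla_L f)}{Af},
\end{equation*}
with $\mathrm{CK}_\lambda=|\dot\lambda|\,\norma{|k,\eta|^{s/2}Af}{L^2}^2$ and $\mathrm{CK}_w=\norma{\sqrt{\dt w/w}\,Af}{L^2}^2$ nonnegative; since $A\ge e^{\lambda'|k,\eta|^s}$, any bound on $\norma{Af}{L^2}$ at once yields \eqref{vorticity}. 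Local well-posedness in the Gevrey class provides the short-time solution to start from.

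\textbf{Transport and remainder.} Next I would Littlewood--Paley decompose both factors and split the nonlinear term into a transport part (low-frequency $\psi_L$, high-frequency $f$), a reaction part (high-frequency $\psi_L$, low-frequency $f$), and a balanced remainder. For the transport part I would move $A$ onto $f_N$, annihilate the main term (after summing in $N$) against $Af$ by the divergence-free, skew-symmetric structure of $\nabla_L^\perp\psi_L\cdot\nabla_L$, and estimate the commutator using that $A$ varies slowly across a dyadic shell, that the low-frequency velocity is --- up to a fixed loss of derivatives --- as regular as $f$, and that its non-resonant part carries the time-integrable decay produced by $(k^2+(\eta-kt)^2)^{-1}$; this bounds it by a small multiple of $\mathrm{CK}_\lambda+\mathrm{CK}_w$ plus $C\epsilon\langle t\rangle^{-2}\norma{Af}{L^2}^2$. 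The remainder is treated identically, the frequency balance making it strictly easier.

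\textbf{Reaction term (the obstacle).} The hardest part, and the one where the echoes sit, is the reaction term $\sum_N\Scalar{A(\nabla_L^\perp\psi_{L,N}\cdot\nabla_L f_{<N/8})}{Af}$. Here $\psi_{L,N}$ is resonant and carries $(k'^2+(\xi-k't)^2)^{-1}$, small near the Orr critical time $t\approx\xi/k'$; transferring $A_k(t,\eta)$ onto the high mode $\hat f_{k'}(\xi)$ produces the ratio $A_k(t,\eta)/A_{k'}(t,\xi)$ with $|k-k'|+|\eta-\xi|\ll N$, and the weight $w$ is designed precisely so that this ratio times the resonant denominator is dominated by a genuinely non-resonant factor plus a factor controlled by $\sqrt{\dt w/w}$ at $(k',\xi)$. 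One then splits the domain into the regions near versus far from the critical time and according to the relative sizes of $|\eta|,|\xi|,t$, absorbing the resonant contribution into $\epsilon\,\mathrm{CK}_w$ and the rest into $\epsilon(\mathrm{CK}_\lambda+\langle t\rangle^{-2}\norma{Af}{L^2}^2)$; the weighted $L^1$ hypothesis $\int|y\omega_{in}|\,dxdy<\epsilon$, which bounds $\hat f$ near $\eta=0$, is used in the sub-case where the output frequency $\xi$ is close to the origin. I expect the genuine difficulty to be the combinatorial bookkeeping of this ratio and of $\dt w/w$ --- essentially the proof of the properties of the weight --- which is why $w$ is taken here in a less sharp, more transparent form than in \cite{BM}.

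\textbf{Conclusion.} Collecting the estimates, on any interval where $\norma{Af(t)}{L^2}\le4\epsilon$ one gets $\tfrac{1}{2}\Dt\norma{Af}{L^2}^2\le-\tfrac{1}{2}(\mathrm{CK}_\lambda+\mathrm{CK}_w)+C\epsilon\langle t\rangle^{-2}\norma{Af}{L^2}^2\le C\epsilon\langle t\rangle^{-2}\norma{Af}{L^2}^2$ for $\epsilon$ small. Since $\norma{Af(0)}{L^2}\le\epsilon$ and $\int_0^{\infty}\langle s\rangle^{-2}\,ds<\infty$, Grönwall gives $\norma{Af(t)}{L^2}\le\epsilon$ for $\epsilon\le\epsilon_0(\lambda_0,\lambda',s)$, which closes the bootstrap and yields \eqref{vorticity}. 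Finally, \eqref{U1}--\eqref{U2} follow from \eqref{u1}--\eqref{u2}, the elementary bound \eqref{japbound}, Plancherel and $\norma{U}{L^2}=\norma{\tilde u}{L^2}$, together with the fact that Gevrey control of $f$ controls in particular all of its Sobolev norms --- exactly the linear computation \eqref{lininvdamp}.
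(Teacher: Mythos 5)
Your proposal follows the same route as the paper: pass to the Couette-following coordinates, run a weighted Gevrey energy estimate with a Cauchy--Kovalevskaya pair $(\mathrm{CK}_\lambda,\mathrm{CK}_w)$, decompose the nonlinearity by paraproduct into transport, reaction, and remainder, absorb the resonant (echo) contributions into $\mathrm{CK}_w$ via a comparison of the weight at nearby frequencies, and read off the velocity decay from the elliptic lemma. Two points, however, deserve correction.

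First, the multiplier you write down, $A_k(t,\eta)=e^{\lambda(t)|k,\eta|^s}\langle k,\eta\rangle^\sigma/w_k(t,\eta)$, is not the one the paper uses; the paper takes $A=e^{\lambda|k,\eta|^s}\langle k,\eta\rangle^\sigma J_k(t,\eta)$ with $J_k(t,\eta)=e^{\mu|\eta|^{1/2}}/w_k(t,\eta)+e^{\mu|k|^{1/2}}$. The prefactor $e^{\mu|\eta|^{1/2}}$ ensures that the multiplier always carries a Gevrey-$2$ reserve in $\eta$ even after the weight has relaxed to $1$, and the additive term $e^{\mu|k|^{1/2}}$ is what makes the frequency-exchange estimates (Lemma \ref{lemmaJ}) close in the regime $|k|\gg|\eta|$, where $w_k\equiv w_{NR}$ is essentially trivial. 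This additive structure is used explicitly, for example in the transport sub-case $T_{N,2}^{E,z}$ where $|l|>100|\xi|$, and it is also what produces the two-tier $A$ versus $\tilde A$ in the $\mathrm{CK}_w$ term. A bare $1/w$ multiplier would not satisfy the needed ratio bounds uniformly, so this is a genuine gap as written, even if it may be reparable by building the Gevrey-$2$ factors into a modified $w$.

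Second, the assertion that the hypothesis $\int|y\,\omega_{in}|\,dx\,dy<\epsilon$ is used in a sub-case of the reaction estimate where the output frequency is near the origin is not what happens here. In the zero-mean setting the zero mode vanishes identically, so $\Delta_L^{-1}$ is always well defined on the relevant frequencies and the elliptic lemma needs no low-frequency control on $\hat f$; the paper remarks explicitly that this hypothesis is vestigial in this setting, kept only for comparison with Theorem \ref{thbedmas}, where it is needed to control the change of coordinates and the zero-mode elliptic problem. Your bootstrap argument therefore should not, and in fact cannot, invoke it in the reaction term. Apart from these two points, the scheme you outline matches the paper's.
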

   For completeness and to make comparison, we state also the theorem for the general dynamics. 
   \begin{theorem}[Bedrossian and Masmoudi \cite{BM}]
   	\label{thbedmas}
   	For all $1/2<s\leq1$, $\lambda_0>\lambda'>0$. Then there exists an $\epsilon_0(\lambda_0,\lambda',s)\leq 1/2$ such that for any $\epsilon<\epsilon_0$, if $\omega_{in}$ satisfies $\int \omega_{in} dx dy=0$, $\int|y\omega_{in}|dxdy<\epsilon$ and
   	\begin{equation*}
   	\norma{\omega_{in}}{\mathcal{G}^{\lambda_0}}\leq \epsilon,
   	\end{equation*}
   	then there exist $f_\infty$ with $\int f_\infty dx dy=0$ and $\norma{f_\infty}{\mathcal{G^{\lambda'}}}\leq \epsilon$ such that 
   	\begin{equation*}
   	\norma{\omega(t,x+yt+\Phi(t,y),y)-f_\infty}{\mathcal{G^{\lambda'}}}\lesssim \frac{\epsilon^2}{\langle t \rangle},
   	\end{equation*}
   	where $\Phi$ is given  explicitly by 
   	\begin{equation*}
   	\Phi(t,y)=\frac{1}{2\pi}\int_0^t \int_{\mathbb{T}}U^x(\tau,x,y)dxd\tau=u_\infty(y)t+O(\epsilon^2),
   	\end{equation*}
   	with $u_\infty=\displaystyle \dy \dyy^{-1} \frac{1}{2\pi}\int_{\mathbb{T}}f_\infty (x,y)dx$. Moreover, the velocity field $U$ satisfies 
   		\begin{align*}
   		\Normabigg{\frac{1}{2\pi}\int_\mathbb{T} U^x(t,x,\cdot)dx-u_\infty}{\mathcal{G^{\lambda'}}}&\lesssim \frac{\epsilon^2}{\langle t \rangle^2},\\
   		\Normabigg{U^x(t)-\frac{1}{2\pi}\int_\mathbb{T} U^x(t,x,\cdot)dx}{L^2}&\lesssim \frac{\epsilon}{\langle t \rangle},\\
   		\Norma{U^y(t)}{L^2}&\lesssim \frac{\epsilon}{\langle t \rangle^2}
   		\end{align*}
   \end{theorem}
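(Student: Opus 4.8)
The plan is to deduce every assertion from a single statement — the vorticity, read in coordinates moving with the \emph{full} nonlinear mean shear, stays uniformly small in a Gevrey class — and to close that statement by a bootstrap. Relative to Theorem~\ref{maintheorem} the new point is that the background shear $y\mapsto y+\overline{U^x}(t,y)$, with $\overline{U^x}=\frac{1}{2\pi}\int_{\mathbb T}U^x\,dx$, is itself an unknown, so one must control \emph{simultaneously} the profile $f$, the vorticity zero mode $\omega_0$, and the displacement $\Phi$. \textbf{Step 1 (adapted coordinates).} Since $\div U=0$ forces the velocity zero mode to be $(\overline{U^x}(t,y),0)$, the characteristics of $\dt+(y+\overline{U^x})\dx$ keep $y$ fixed; following them is the change of variables $z=x-ty-\Phi(t,y)$, $v=y$, with $\Phi$ exactly as in the statement. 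Set $f(t,z,v)=\omega(t,z+tv+\Phi(t,v),v)$. Under this map $\nabla\mapsto\nabla_L^{\Phi}$, a $\Phi$-deformation of $\nabla_L$ from \eqref{DeltaL} (with $\dv-(t+\dv\Phi)\dz$ in place of $\dy-t\dz$), $\Delta\mapsto\Delta_L^{\Phi}$ likewise, and
\[
\dt f+\big(\tilde U_{\neq}\cdot\nabla_L^{\Phi}\big)f=0,\qquad \tilde U_{\neq}=(\nabla_L^{\Phi})^{\perp}\Psi,\quad \Delta_L^{\Phi}\Psi=f_{\neq},
\]
forced only by the nonzero modes, while $f_0=\overline f=\omega_0$ satisfies $\dt\overline f=-\dv\,\overline{\tilde U^y f}$. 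Because $\dt\dv\Phi=\dv\overline{U^x}=\dv\dvv^{-1}\dv\overline f$, a Gevrey bound for $\overline f$ and finitely many of its $v$-derivatives slaves one for $\Phi$; this is what makes the coordinate change Gevrey-invertible and is precisely the ingredient absent in Theorem~\ref{maintheorem}.

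\textbf{Step 2--3 (weight, energy, paraproducts: the heart).} Following \cite{BM} and the approximate construction announced in the note, introduce $A=A(t,\nabla)$ with $A_k(\eta)=e^{\lambda(t)|k,\eta|^s}\langle k,\eta\rangle^{\sigma}w_k(\eta)^{-1}$, $\lambda(t)\downarrow\lambda_\infty>\lambda'$, and $w$ the echo weight whose time derivative is strongly negative on the resonant set $\eta\approx kt$. Work with $E(t)=\norma{Af}{L^2}^2$ plus lower-regularity energies for $\overline f$ and for $\dv^{\le n}\Phi$; then
\[
\Dt\norma{Af}{L^2}^2=2\dot\lambda\norma{|\nabla|^{s/2}Af}{L^2}^2+\mathrm{CK}_w-2\Scalar{Af}{A\big(\tilde U_{\neq}\cdot\nabla_L^{\Phi}f\big)},
\]
with the first term $\le0$ and $\mathrm{CK}_w\le0$ the Cauchy--Kovalevskaya gain from $\dt w$. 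Decompose the nonlinearity by a Littlewood--Paley paraproduct into a \emph{transport} part (low-frequency velocity, high-frequency $f$), a \emph{remainder} (comparable frequencies), and a \emph{reaction} part (high-frequency velocity built from $f_{\neq}$, low-frequency $f$). The transport and remainder pieces are controlled by commutator estimates — using that $A$ varies slowly between nearby frequencies — and absorbed into $\dot\lambda\norma{|\nabla|^{s/2}Af}{L^2}^2$. The reaction part is the dangerous one: near $t\approx\eta/k$ it generates the echo cascade, and precisely there $\mathrm{CK}_w$ must dominate, which is the (essentially combinatorial) verification that the weight ratios along the cascade stay bounded. The $\Phi$-corrections hidden in $\nabla_L^{\Phi},\Delta_L^{\Phi}$ are perturbative, controlled by the smallness of $\dv^{\ge1}\Phi$ from Step~1; they only add terms of the same type already handled — the new bookkeeping compared with Theorem~\ref{maintheorem}.

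\textbf{Step 4--5 (bootstrap, then damping and scattering).} On the maximal interval where $E\le(2\epsilon)^2$, $\norma{\overline f}{\mathcal G^{\lambda'}}\le2\epsilon$ and $\norma{\dv^{\le n}\Phi}{\mathcal G^{\lambda'}}\lesssim\epsilon$, Steps~2--3 yield $\Dt E\le C\epsilon\cdot(\text{negative CK terms})$, so $E\le E(0)+C\epsilon^3\le(\tfrac32\epsilon)^2$, and similarly for $\overline f$ and $\Phi$; by continuity the interval is $[0,\infty)$, and since $A_k(\eta)\gtrsim e^{\lambda'|k,\eta|^s}$ this already gives $\norma{f(t)}{\mathcal G^{\lambda'}}\lesssim\epsilon$, the first assertion up to unwinding coordinates. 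From here: (i) expressing $U^x_{\neq},U^y_{\neq}$ via $(\Delta_L^{\Phi})^{-1}$ and using the analogue of \eqref{japbound}, $\langle\eta\rangle^2\langle\eta-kt-k\dv\Phi\rangle^2\gtrsim\langle t\rangle^2$ (valid once $\dv\Phi$ is small), gives $\norma{U^x-\overline{U^x}}{L^2}\lesssim\epsilon\langle t\rangle^{-1}$ and $\norma{U^y}{L^2}\lesssim\epsilon\langle t\rangle^{-2}$; (ii) the exact identity $\dt\overline{U^x}=\dv\,\overline{U^x_{\neq}U^y_{\neq}}$ (from $\div U=0$), combined with (i), yields $\norma{\dt\overline{U^x}}{\mathcal G^{\lambda'}}\lesssim\epsilon^2\langle t\rangle^{-3}$, so $\overline{U^x}(t)\to u_\infty$ in $\mathcal G^{\lambda'}$ at rate $\epsilon^2\langle t\rangle^{-2}$, whence $\Phi=u_\infty t+O(\epsilon^2)$; (iii) $\dt f$ is $O(\epsilon^2)$ in a slightly weaker Gevrey class with integrable decay (from the damping of $\tilde U_{\neq}$, interpolated against the uniform $\mathcal G^{\lambda'}$ bound, with lower semicontinuity controlling the radius), so $f(t)\to f_\infty$ with $\int f_\infty=0$, $\norma{f_\infty}{\mathcal G^{\lambda'}}\le\epsilon$, $\norma{f(t)-f_\infty}{\mathcal G^{\lambda'}}\lesssim\epsilon^2\langle t\rangle^{-1}$; averaging $f(t,z,v)=\omega(t,z+tv+\Phi,v)$ in $z$ identifies $u_\infty=\dv\dvv^{-1}\overline{f_\infty}$, and rewriting in terms of $\omega$ gives all stated inequalities.

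\textbf{Main obstacle.} The crux is the reaction term in Step~3: proving the echo weight $w$ absorbs exactly the critical-time growth of the nonlinear cascade at a cost of no more than $\mathcal G^{\lambda'}$ regularity, which is where the borderline exponent $s>1/2$ enters. The genuinely new difficulty compared with Theorem~\ref{maintheorem} is running the $\overline f$/$\Phi$ sub-bootstrap in parallel, so that the deformed operators $\nabla_L^{\Phi},\Delta_L^{\Phi}$ remain harmless and the change of variables stays invertible in Gevrey class; everything else is a refinement of the $U_0=0$ argument already presented for Theorem~\ref{maintheorem}.
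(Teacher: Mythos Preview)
This theorem is not proved in the paper: it is the result of Bedrossian and Masmoudi, stated here without proof purely for comparison with the zero-mean Theorem~\ref{maintheorem} that the note actually establishes. There is thus no proof in this paper to compare your proposal against.

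Your sketch is nonetheless a reasonable high-level outline of the \cite{BM} strategy, and you correctly identify the new ingredient relative to Theorem~\ref{maintheorem}: running a parallel bootstrap on the zero mode $\overline f$ and on $\Phi$ so that the adapted coordinates stay Gevrey-invertible and the deformed operators remain perturbative. One caveat worth flagging: with your choice $v=y$, the operator $\Delta_L^\Phi$ acquires a genuinely variable coefficient $\partial_v\Phi(t,v)$, so its inverse is no longer a Fourier multiplier; obtaining the elliptic estimate (the analogue of Lemma~\ref{ellipticlemma}) then becomes a substantial piece of analysis, as the paper itself warns in the remark following that lemma. In \cite{BM} this is handled by also changing the vertical coordinate, and the resulting coefficient control is a significant part of the bootstrap, not merely ``terms of the same type already handled.'' At the level of a sketch your ideas are right, but the elliptic and coordinate-regularity step is where most of the extra work beyond Theorem~\ref{maintheorem} actually lives.
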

   \begin{remark}
   	Clearly our statement is a simpler version of the one in \cite{BM}, because we do not care about $\int_\mathbb{T} U^xdx$, which modify the background shear flow that one has to follow.
   \end{remark}
\begin{remark}
	The theorem in our setting resembles the linear case, in fact our assumption destroys the nonlinear interactions of the zero mode, in particular we lose also information about the scattering of $\omega$ to a steady profile.
\end{remark}
\begin{remark}
	We stress again that the assumption of zero mean it does not have any rigorous justification, and due to the fact that we lose some important information about the general dynamics, theorem \ref{maintheorem} should be considered as a particular case useful to familiarize with the techniques to prove theorem \ref{thbedmas}. 
\end{remark}
\begin{remark}
	As pointed out in \cite{BM}, the spatial localization $\int |y\omega_{in}|dxdy<\epsilon$ is used only to ensure that $U^x\in L^2$. Actually for them it is useful to control the change of coordinate, in our case we do not need such hypothesis, but we have included it just to compare the two theorems.
\end{remark}
For more discussion on implications of the general dynamics we refer to \cite{BM}.
       \section{Proof of Theorem \ref{maintheorem}}
       \label{secproof}
       Thanks to the assumption of zero average, we perform the change of variable as in the linear case, namely $z\rightarrow x-yt$ and define 
       \begin{align*}
       &f(t,z,y)=\omega(t,z+yt,y),\\
       &\tilde{u}(t,z,y)=U(t,z+yt,y),\\
       &\phi(t,z,y)=\psi(t,z+yt,y).
       \end{align*} 
      The system \eqref{eq:vort} becomes
       \begin{equation}
       \label{futilde}
       \begin{split}
       &\dt f+ \tilde{u} \cdot\nabla_{L} f=0,\\
       &\tilde{u}=\nabla^\perp_L\phi=\nabla^\perp_L\Delta_L^{-1}f,
       \end{split}
       \end{equation} 
       where $\nabla_{L}, \Delta_L$ are defined in \eqref{DeltaL}.\\ 
      Now observe that in \eqref{futilde} there is a crucial cancellation that eliminates the time dependent factor of $\nabla_{L}$, in fact \eqref{futilde} is exactly the same as 
      \begin{equation}
      \label{fu}
      \begin{split}
      &\dt f+u\cdot \nabla_{z,y}f=0,\\
      &u:=\nabla^\perp_{z,y} \phi= \nabla^\perp_{z,y}\Delta_L^{-1}f,
      \end{split}
      \end{equation}
      which is the one that we want to investigate. A similar cancellation holds also in the general case, where one has to chose carefully the change of coordinates. In contrast to \cite{BM}, here $u$ is again divergence free. From now on we will omit the dependence of the gradients in $(z,y)$, since it will be clear when we are working on the new reference frame. \\ \indent
      The main hint of the linear case, is that if one wants decay on the velocity, then it is sufficient to bound $f$ in a suitable high regularity space, because if we can pay at least an $H^1$ price on $f$, then the decay is naturally given by the negative order differential operator in $\eqref{futilde}_2$. The nonlinear structure of the problem tells that, even arguing heuristically, a bound in $H^s$ implies at least a control on $H^{s+2}$. This is the main reason why it is necessary an infinite regularity setting. \\ \indent
      For this purpose, we introduce the multiplier 
      \begin{equation}
      \label{def:A}
      A_k(t,\eta)=e^{\lambda(t)|k,\eta|^s}\langle k,\eta \rangle^\sigma J_k(t,\eta),
      \end{equation}
      where $\lambda(t)$, chosen by the proof, has to satisfy the following ODE:
      \begin{equation}
      \label{deflambda}
      \begin{split}
      &\dot{\lambda}(t)=-\frac{\delta_\lambda}{\langle t \rangle^{2s}}\big(1+\lambda(t)\big) \ \ t\geq 1\\
      &\lambda(t)=\frac{3}{4}\lambda_0+\frac{1}{4}\lambda'\ \ t\leq 1,
      \end{split} 
      \end{equation}
      where $\delta_\lambda \approx \lambda_0-\lambda'$ is a small parameter to ensure that $\lambda(t)>\lambda_0/2+\lambda'/2$.\\ \indent
       The Sobolev regularity, with $\sigma>10$, is considered just to avoid technicalities to close estimates with the same index of Gevrey regularity. Instead the most important part is the multiplier $J$, defined as 
      \begin{equation}
      \label{def:J}
      J_k(t,\eta)=\frac{e^{\mu |\eta|^{1/2}}}{w_k(t,\eta)}+e^{\mu|k|^{1/2}},
      \end{equation}
      and the key element is the weight $w$, to be constructed later based on a toy model of the 'worst possible case'. In particular, $1/w$ will grow as $e^{\mu/2 \sqrt{\eta}}$, and so we control growth in time by paying this amount of derivatives. 
      \begin{remark}
      	The introduction of this weight maybe is one of the key point in the proof of \cite{BM} and we discuss more about it in Section \ref{secweight}.
      \end{remark}
      For convenience define also 
      \begin{align}
      \label{def:Jtilde}
      \tilde{J}_k(t,\eta)&=\frac{e^{\mu |\eta|^{1/2}}}{w_k(t,\eta)}\\
      \tilde{A}_k(t,\eta)&=e^{\lambda(t)|k,\eta|^s}\langle k,\eta \rangle^\sigma \tilde{J}_k(t,\eta).
      \end{align} 
      Just notice that it holds $\tilde{A}\leq A$ and if $|k|\leq |\eta|$, then $A\lesssim \tilde{A}$.\\ \indent
      	As pointed out also in \cite{BM}, to treat the analytic setting, namely $s=1$, one should add an additional Gevrey correction, since if $s<1$, we have an improved triangular inequality, see lemma \ref{lemmaconc}, that helps for example in keeping always the same radius of regularity $\lambda$. For instance it is enough to include an additional Gevrey-$1/\beta$ regularity, with $1/2<\beta<1$.\\ \indent

      Finally the proof of the theorem is reduced to prove that the following energy is bounded,
      \begin{equation}
      E(t)=\frac{1}{2}\norma{A(t)f(t)}{L^2_{z,y}}^2,
      \end{equation}
      because in fact we have that 
      \begin{equation}
      \label{omegalessAf}
      \norma{\omega(t,x+yt,y)}{\mathcal{G}^{\lambda'}}\leq\norma{Af}{L^2}.
      \end{equation}
      The idea is to proceed via a bootstrap argument, and to start the argument there is local well-posedness for 2D-Euler in Gevrey spaces. For this and related results see \cite{bardos1977domaine,ferrari1998gevrey,foias1989gevrey,kukavica2009radius,levermore1997analyticity}. Let us recall what we need.
      \begin{lemma}
      	\label{localwell}
      	For all $\epsilon>0$, $s>1/2$ and $\lambda_0>\lambda'>0$, there exists an $\epsilon'>0$ such that if $\norma{\omega_{in}}{\mathcal{G^{\lambda_0}}}\leq \epsilon'$ and $\int|y\omega_{in}|dxdy\leq\epsilon'$, then 
      	\begin{align*}
      	&\sup_{t\in[0,1]}\norma{f(t)}{\mathcal{G}^{3\lambda_0/4+\lambda'/4,\sigma}}<\epsilon,\\
      	&E(1)<\epsilon^2.
      	\end{align*}
      \end{lemma}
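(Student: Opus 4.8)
The plan is to reduce Lemma~\ref{localwell} to a single a priori energy estimate on the bounded interval $[0,1]$, carried out in a Gevrey space whose radius decreases \emph{linearly} in time; combined with the usual mollification/iteration construction of Gevrey-class solutions to the 2D Euler equation (see the references cited above), and since the a priori bound will be uniform along the approximating sequence, this yields existence, uniqueness and the quantitative bounds at once. Concretely, I would pick a radius $\lambda_{\mathrm{loc}}(t)$ decreasing linearly on $[0,1]$ from a value slightly below $\lambda_0$ to a value slightly above $\tfrac34\lambda_0+\tfrac14\lambda'$: the small gap at $t=0$ lets us upgrade the hypothesis (which is stated without Sobolev weight) to the same smallness with the $\sigma$-correction, $\norma{\omega_{in}}{\mathcal{G}^{\lambda_{\mathrm{loc}}(0),\sigma}}\lesssim\norma{\omega_{in}}{\mathcal{G}^{\lambda_0}}\le\epsilon'$, at the cost of a harmless constant; the value at $t=1$ is kept above $\tfrac34\lambda_0+\tfrac14\lambda'$ so that, since $s>1/2$, it also dominates the bounded Gevrey-$2$ correction $J_k(1,\eta)\lesssim e^{C\mu|k,\eta|^{1/2}}$ coming from \eqref{def:J}. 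Setting $\Lambda_k(t,\eta):=e^{\lambda_{\mathrm{loc}}(t)|k,\eta|^s}\langle k,\eta\rangle^{\sigma}$ and $E_{\mathrm{loc}}(t):=\tfrac12\norma{\Lambda(t)f(t)}{L^2}^2$, it then suffices to prove $E_{\mathrm{loc}}\lesssim\epsilon'^2$ on $[0,1]$: this gives $\sup_{[0,1]}\norma{f(t)}{\mathcal{G}^{3\lambda_0/4+\lambda'/4,\sigma}}<\epsilon$ and, through the domination just described, $E(1)\lesssim E_{\mathrm{loc}}(1)<\epsilon^2$. (The localisation hypothesis $\int|y\omega_{in}|\,dxdy<\epsilon'$ is not actually needed here, since under our zero-average assumption $u$ has no zero mode; we keep it only for comparison with \cite{BM}.)

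For the a priori estimate I would differentiate $E_{\mathrm{loc}}$ along \eqref{fu}. Writing $\dot\lambda_{\mathrm{loc}}\equiv-\kappa<0$ and using that $\operatorname{div}u=0$ annihilates $\scalar{\Lambda f}{u\cdot\nabla(\Lambda f)}$, the full nonlinear contribution collapses to a commutator:
\begin{equation*}
\Dt E_{\mathrm{loc}}(t)=-\kappa\,\norma{|k,\eta|^{s/2}\Lambda(t)f(t)}{L^2}^2-\scalar{\Lambda f}{[\Lambda,\,u\cdot\nabla]f}.
\end{equation*}
On the bounded interval $[0,1]$ the Biot--Savart law is innocuous: $(k^2+(\eta-kt)^2)^{-1}$ is bounded, and one checks $|\eta|\lesssim k^2+(\eta-kt)^2$ and $|k|\lesssim k^2+(\eta-kt)^2$ for $k\neq0$ and $t\in[0,1]$, while the zero mode of $u$ vanishes; hence $\norma{u(t)}{\mathcal{G}^{\lambda_{\mathrm{loc}},\sigma}}\lesssim\norma{f(t)}{\mathcal{G}^{\lambda_{\mathrm{loc}},\sigma}}$ uniformly. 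The commutator is then handled by the standard Gevrey product estimate: bound the symbol increment $\Lambda_k(\eta)-\Lambda_l(\xi)$ using subadditivity of $x\mapsto x^s$ together with the improved triangle inequality of Lemma~\ref{lemmaconc} when $s<1$, and use $\sigma>10$ to dispose of the bounded-frequency pieces, obtaining the ``half-derivative loss''
\begin{equation*}
\big|\scalar{\Lambda f}{[\Lambda,\,u\cdot\nabla]f}\big|\lesssim\norma{|k,\eta|^{s/2}\Lambda f}{L^2}\,\norma{f}{\mathcal{G}^{\lambda_{\mathrm{loc}},\sigma}}^{2}+\norma{f}{\mathcal{G}^{\lambda_{\mathrm{loc}},\sigma}}^{3}.
\end{equation*}

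Absorbing the first term into $-\kappa\norma{|k,\eta|^{s/2}\Lambda f}{L^2}^2$ by Young's inequality leaves $\Dt E_{\mathrm{loc}}\lesssim E_{\mathrm{loc}}^{3/2}(1+E_{\mathrm{loc}}^{1/2})$, which with $E_{\mathrm{loc}}(0)\lesssim\epsilon'^2$ and a continuity argument stays $\lesssim\epsilon'^2$ on $[0,1]$ provided $\epsilon'$ is small; tracing constants and shrinking $\epsilon'$ then yields the stated $<\epsilon$ and $<\epsilon^2$. The main obstacle is the commutator estimate itself — specifically, extracting the power $|k,\eta|^{s/2}$ (and no more) so that Young's inequality genuinely closes against the dissipation-like term $-\kappa|k,\eta|^{s}$; this is exactly the classical Gevrey calculus and is far softer here than in the global problem, because on a fixed bounded time interval none of the echo/Orr-critical-time resonances that drive the rest of the note are present. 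The remaining steps — the behaviour of $J$ at $t=1$, the regularity of $u$ on $[0,1]$, and the mollification scheme — are routine bookkeeping.
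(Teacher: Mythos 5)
The paper does not actually prove Lemma~\ref{localwell}: it states the result and delegates to the local well-posedness literature for 2D Euler in Gevrey/analytic classes (the references cited immediately before the statement). So your proposal is not competing with an in-paper argument; it supplies one. The route you take is the standard one from that literature, specialized to the conventions of this note: a Gevrey weight with linearly shrinking radius $\lambda_{\mathrm{loc}}(t)$ on $[0,1]$, the Cauchy--Kovalevskaya term $-\kappa\norma{|\nabla|^{s/2}\Lambda f}{L^2}^2$, a commutator estimate via the elementary Gevrey calculus of Lemma~\ref{lemmaconc}, the observation that on $[0,1]$ with $k\neq 0$ (our zero-mean assumption) the Biot--Savart operator $\nabla_L^\perp\Delta_L^{-1}$ is harmless so that $\norma{u}{\mathcal{G}^{\lambda_{\mathrm{loc}},\sigma}}\lesssim\norma{f}{\mathcal{G}^{\lambda_{\mathrm{loc}},\sigma}}$, and a continuity argument on the resulting Riccati-type inequality. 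Your two bookkeeping observations --- that the small gap $\lambda_0-\lambda_{\mathrm{loc}}(0)$ absorbs the missing Sobolev factor $\langle k,\eta\rangle^\sigma$ at $t=0$, and that because $s>1/2$ the bounded correction $J_k(1,\eta)\lesssim e^{C\mu|k,\eta|^{1/2}}$ is dominated by the gap $\lambda_{\mathrm{loc}}(1)-\lambda(1)$, giving $E(1)\lesssim E_{\mathrm{loc}}(1)$ --- are exactly what is needed to translate the $E_{\mathrm{loc}}$ bound into the two displayed conclusions. This is correct and consistent both with the cited references and with the global energy scheme the note uses afterwards.

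One imprecision worth flagging, though it is not a gap: the commutator bound you display, $\norma{|\nabla|^{s/2}\Lambda f}{L^2}\norma{f}{\mathcal{G}^{\lambda_{\mathrm{loc}},\sigma}}^{2}$, is not the form the paraproduct split naturally produces. The transport piece (velocity at low frequency) yields $\norma{|\nabla|^{s/2}\Lambda f}{L^2}^{2}\,\norma{u}{\mathcal{G}^{\lambda_{\mathrm{loc}},\sigma-3}}$, which is absorbed into the Cauchy--Kovalevskaya term not by Young's inequality but by taking $\epsilon'$ small relative to the slope $\kappa$ of $\lambda_{\mathrm{loc}}$; the reaction and remainder pieces give the cubic term. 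Either form closes, but in a full write-up the distinction matters for stating cleanly what smallness condition on $\epsilon'$ is being used. Everything else --- the irrelevance of the spatial-localization hypothesis under the zero-mean assumption, the mollification scheme for existence, the Gronwall/continuity step --- is as you describe.
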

      So we safely ignore the interval $[0,1]$. To get a bound for $E(t)$, it is natural to compute the derivative with respect to time to get 
      \begin{equation}
      \label{dtE}
      \begin{split}
      \Dt E(t)&=\scalar{\dt(A)f}{Af}+\scalar{A\dt f}{Af}\\
      & =\dot{\lambda}\scalar{|\nabla|^sAf}{Af}-\scalar{\frac{\dt w}{w}\tilde{A}f}{Af}-\scalar{A(u\cdot \nabla f)}{Af}\\
      &=-CK_\lambda-CK_w-\scalar{A(u\cdot \nabla f)}{Af}.
      \end{split}
      \end{equation}
      where the scalar product is in $(z,y)$, hence $(k,\eta)$ by Plancharel. The terms $CK_\cdot$ stands for Cauchy-Kovalevskaya. The involved Fourier multipliers are positive, hence the associated operators are self-adjoint. So rewrite the $CK$'s terms as 
      \begin{align}
      \label{CKl}
      CK_\lambda&=-\dot{\lambda}\norma{|\nabla|^{s/2}Af}{L^2}^2\\
      \label{CKw}
      CK_{w}&=\bigg \langle\sqrt{\frac{\dt w}{w}}\tilde{A}f,\sqrt{\frac{\dt w}{w}}Af\bigg \rangle\geq \bigg \lVert\sqrt{\frac{\dt w}{w}}\tilde{A} f\bigg \rVert_{L^2}^2.
      \end{align} 
      As said in \cite{BM}, the previous computations are not really rigorous, because it is not ensured that it is possible to take derivatives of $Af$. One can overcome the problem by regularization and passage to the limit. \\
      Now we are ready to make our bootstrap hypothesis for $t\geq1$, namely 
      \begin{align}
      \label{B1}\tag{B1}
      &E(t)\leq 4\epsilon^2\\
      \label{B2}\tag{B2}&\int_1^t(CK_\lambda(\tau)+CK_w(\tau))d\tau\leq 8\epsilon^2.
      \end{align}
      Let us state the main proposition to prove theorem \ref{maintheorem}.
      \begin{proposition}
      	\label{mainprop}
      	Assume that \eqref{B1}-\eqref{B2} holds in a time interval $[1,T^*]$. Then there exists and $0<\epsilon_0<1/2$, depending on $\lambda,\lambda',s,\sigma$, such that if $\epsilon<\epsilon_0$, then for any $t\in [1,T^*]$ it holds
      	\begin{align}
      	&E(t)\leq 2\epsilon^2\\
      &\int_1^t(CK_\lambda(\tau)+CK_w(\tau))d\tau\leq 6\epsilon^2.
      	\end{align}
      \end{proposition}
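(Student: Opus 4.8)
The plan is a standard continuity/bootstrap improvement: assuming the hypotheses \eqref{B1}--\eqref{B2} on $[1,T^*]$, integrate \eqref{dtE} in time and show that the nonlinear transport term, together with the favourable $CK$ contributions, can only improve the constants from $4\epsilon^2$ to $2\epsilon^2$ and from $8\epsilon^2$ to $6\epsilon^2$, provided $\epsilon$ is small. Concretely, from \eqref{dtE} we have
\[
E(t)+\int_1^t \big(CK_\lambda(\tau)+CK_w(\tau)\big)\,d\tau = E(1) - \int_1^t \scalar{A(u\cdot\nabla f)}{Af}(\tau)\,d\tau,
\]
and by Lemma \ref{localwell} we may take $E(1)<\epsilon^2$. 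So the whole game is to prove
\[
\left| \int_1^t \scalar{A(u\cdot\nabla f)}{Af}\,d\tau \right| \lesssim \epsilon\,\Big(\int_1^t \big(CK_\lambda+CK_w\big)\,d\tau\Big) + \epsilon^3 \lesssim \epsilon^3,
\]
i.e. a cubic-in-$\epsilon$ bound with a small prefactor absorbing the $CK$ terms. Then $E(t) \le \epsilon^2 + C\epsilon^3 \le 2\epsilon^2$ and likewise the integral bound follows once $\epsilon<\epsilon_0$ is chosen small enough.

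The core estimate is the nonlinear reaction bound, which I would organize exactly as in \cite{BM}. First, since $u=\nabla^\perp\phi$ is divergence free (this is the simplification our zero-mean assumption buys us, noted after \eqref{fu}), rewrite $u\cdot\nabla f = \nabla^\perp\phi\cdot\nabla f$ and split it via the paraproduct decomposition (the appendix referenced as \ref{appelitt}): a \emph{transport} piece where $\phi$ is at low frequency, a \emph{reaction} piece where $f$ is at low frequency and $\phi$ at high frequency, and a \emph{remainder}/high-high piece. For each piece one applies $A$, uses the commutator-type splitting $A_k(t,\eta) \lesssim A_{k-l}(t,\eta-\xi) + A_l(t,\xi)$ controlled through the subadditivity of $|k,\eta|^s$ for $s\le 1$ (Lemma \ref{lemmaconc}) and the behaviour of $J$/$w$, and pairs against $Af$. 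The decisive point — and the place where the weight $w$ earns its keep — is the reaction term near the critical (Orr) times $t\approx \eta/k$: there $A_k(t,\eta)/A_l(t,\xi)$ can be large, and one trades this growth against the $CK_w$ term, using that $\partial_t w/w$ is comparable to the resonant amplification factor. This is the classical ``echo chain'' mechanism; with our hypothesis the zero-mode interactions are absent, so the analysis is cleaner but the echoes survive and must still be tamed by $w$.

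The steps, in order, are: (i) quote $E(1)<\epsilon^2$ from Lemma \ref{localwell}; (ii) integrate \eqref{dtE}; (iii) paraproduct-decompose the transport nonlinearity; (iv) bound the transport/low-$\phi$ term by $\epsilon\, CK_\lambda + \epsilon\,E$ using $\dot\lambda$-gain at high frequency plus inviscid-damping decay of $u$ (estimates \eqref{u1}--\eqref{u2} type, i.e. $\widehat{u}\sim |\text{low freq.}|/|k,\eta-kt|^2 \cdot \hat f$ in the new frame); (v) bound the reaction/high-$\phi$ term by $\epsilon\,CK_w + \epsilon\,CK_\lambda + \epsilon\,E$, which is the heart of the matter; (vi) bound the high-high remainder by $\epsilon\,CK_\lambda + \epsilon\,E$ exploiting the extra Sobolev factor $\langle k,\eta\rangle^\sigma$ with $\sigma>10$; (vii) assemble, invoke \eqref{B1}--\eqref{B2} to turn the right-hand sides into $\lesssim \epsilon^3$ and $\lesssim \epsilon\cdot\epsilon^2$, and choose $\epsilon_0$ so that $\epsilon^2 + C\epsilon^3 \le 2\epsilon^2$ and $\epsilon^2 + C\epsilon^3 \le 6\epsilon^2$. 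I expect step (v), the reaction estimate and in particular the quantitative matching of the growth of $1/w$ (behaving like $e^{(\mu/2)\sqrt{|\eta|}}$) against the echo cascade, to be the main obstacle; everything else is bookkeeping with the norms and the decay in $\langle t\rangle$. These sub-estimates are presumably isolated as the remaining propositions announced at the end of Section \ref{secproof}, so here one only needs to wire them together with the bootstrap.
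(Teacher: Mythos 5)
Your proposal is correct and follows essentially the same route as the paper: integrate \eqref{dtE}, invoke Lemma \ref{localwell} for $E(1)<\epsilon^2$, paraproduct-decompose the commutator of the transport nonlinearity into transport/reaction/remainder (Propositions \ref{proptransport}--\ref{propremainders}), bound each by $\epsilon\,CK_\lambda+\epsilon\,CK_w+\epsilon^3/\langle t\rangle^2$, and absorb/close the bootstrap for $\epsilon$ small. The only cosmetic difference is that you plug the bootstrap bound $\int CK\leq 8\epsilon^2$ directly to get a cubic bound while the paper absorbs the $\epsilon\,CK$ terms into the left-hand side with a $(1-\epsilon)$ coefficient; both yield the improvement to $2\epsilon^2$ and $6\epsilon^2$.
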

  \begin{remark}
  	To continue the comparison with \cite{BM}, here the bootstrap hypothesis and so proposition \ref{mainprop}, are simplified. One simplification comes from the fact that we do not have to control our change of variable, being always well defined. 
  \end{remark}
  This proposition means that $T^*=+\infty$, hence proving our theorem. To see that, one can argue as follows: since $E(t)$ is continuous then the set of times on which it holds the property $E(t)\lesssim \epsilon^2$, namely $[1,T^*]$, is closed and open, but since it is also connected it is only possible if $T^*=+\infty$. Otherwise one can think that since $E(T^*)\leq 2\epsilon^2$, then, by the local well-posedness lemma \ref{localwell}, we can argue analogously for the interval $[T^*,T^{**}]$. So one extend $T^*$ up to infinity.\\ \\ \indent
      Since everything is reduced in proving proposition \ref{mainprop}, let us sketch the idea. The last term in \eqref{dtE} is the one that we have to control, here we have the major simplifications with respect to the true case of non zero mean. A first simplification (not really significant) comes from the fact that $\div u=0$, hence one has 
      \begin{equation}
      0=\frac{1}{2}\int\div(u|Af|^2)=\int Af(u\cdot \nabla Af),
      \end{equation}
      and so just by adding zero to the last term in \eqref{dtE}, we obtain a commutator that helps in controlling some term, namely 
      \begin{equation}
      \scalar{Af}{A(u\cdot \nabla f)-u\cdot \nabla Af}=\sum_{N\geq 8}T_N+\sum_{N\geq 8}R_N+\sum_{N\in\mathbf{D}}\sum_{N/8\leq N'\leq 8N}\mathcal{R}_{N,N'},
      \end{equation}
      where we have just performed a paraproduct decomposition, see \ref{appelitt}. Each term is defined as follow 
      \begin{align*}
      T_N&=\scalar{Af}{A(\uNe \cdot \nabla f_N)-\uNe\cdot \nabla(Af)_N},\\
      R_N&=\scalar{Af}{A(u_N \cdot \nabla \fNe)-u_N\cdot \nabla\AfNe},\\
      \mathcal{R}_{N,N'}&=\scalar{Af}{A(u_N \cdot \nabla f_{N'})-u_N\cdot \nabla Af_{N'}}.
      \end{align*}
      
       $T_N$ stand for \textit{transport}, since the 'velocity' $u$ is cut at low frequencies, and so it will be easy to obtain bounds without paying to much regularity. Essentially it is like treating this term as if the $f$ is transported by a passive velocity. Here the assumption of zero $x$-average has not a big influence.\\ \indent
      Instead the most problematic term is $R_N$, called \textit{reaction}, where one has to be really careful to control the possibility of losing regularity in time. In fact the weight $w$ is built up to simulate the behaviour of a particular term of it, namely the one that produces \textit{echoes}. Here it will be crucial to split carefully in different time intervals, to recover integrability in time when possible and otherwise absorbing terms thanks to the weight. \\
      For this term the assumption of zero mean plays a major role to reduce the number of terms that one has to control.  \\ \indent
      Finally $\mathcal{R}_{N,N'}$ is a \textit{remainder} and it is the easiest one to bound.
      \begin{remark}
      	 In the case of non-zero mean, $u$ is not only given by some derivative of the stream function, but it contains also derivatives of the zero mode. For this reason, when $u$ is at high frequency, namely in the reaction term, one should be really careful to obtain precise estimates. In fact, for example, to recover integrability in time, the elliptic estimates has to be obtained following the background shear flow, for us it is trivial, in \cite{BM} it requires a precise elliptic control which has to be carefully obtained, since as already said, the background shear flow changes with the solution itself. 
      \end{remark}
      So let us just summarize in the following propositions the estimates that we have to prove.
      \begin{proposition}[Transport]
      	\label{proptransport}
      	Under the bootstrap hypothesis 
      	\begin{equation*}
  	\sum_{N\geq 8}|T_N|\lesssim \epsilon CK_\lambda+\epsilon CK_w +\frac{\epsilon^3}{\langle t \rangle^2}.
      	\end{equation*}
      \end{proposition}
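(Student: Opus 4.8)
The plan is to exploit the commutator structure of $T_N$ together with the frequency separation built into the paraproduct. I would first pass to the Fourier side: with the convention of \eqref{notationBC} and the paraproduct conventions (see~\ref{appelitt}),
\begin{equation*}
T_N=\sum_{k}\int\overline{A_k(\eta)\hat f_k(\eta)}\,\sum_{l}\int\big(A_k(\eta)-A_l(\xi)\big)\,\hat u_{k-l}(\eta-\xi)_{<N/8}\cdot(il,i\xi)\,\hat f_l(\xi)_N\,d\xi\,d\eta ,
\end{equation*}
where $(l,\xi)$ carries the high frequency ($\approx N$) and $(k-l,\eta-\xi)$ the low one ($<N/8$), so that on the support of the integrand $\langle k,\eta\rangle\approx\langle l,\xi\rangle$ and $|k-l,\eta-\xi|\le\tfrac14|l,\xi|$. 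I would also use that, under the zero-mean hypothesis, $u$ (hence $\uNe$) has no $k=0$ mode, so that the Biot--Savart law genuinely produces decay out of it.

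The heart of the matter is a pointwise bound on the symbol difference $A_k(\eta)-A_l(\xi)$ at comparable frequencies. Writing $A=e^{\lambda(t)|\cdot|^s}\langle\cdot\rangle^\sigma J$ and invoking the improved triangle inequality of Lemma~\ref{lemmaconc}, I would split $A_k(\eta)-A_l(\xi)$ into the contributions of the three factors. In the piece where the difference quotient hits the Gevrey exponent one gains, relative to $A$, a factor $\approx\lambda\,|l,\xi|^{s-1}$; pairing it with the derivative symbol $(il,i\xi)$, which contributes $|l,\xi|$, and with the low frequency $|k-l,\eta-\xi|$, and distributing $|l,\xi|^{s}\approx|k,\eta|^{s/2}|l,\xi|^{s/2}$ between the two copies of $Af$, one rebuilds a term of the form $CK_\lambda=-\dot\lambda\norma{|\nabla|^{s/2}Af}{L^2}^2$; the leftover scalar $\lambda/(-\dot\lambda)\lesssim\langle t\rangle^{2s}$ is then killed by the $\langle t\rangle^{-2}$ decay of $\uNe$ (next paragraph), and since $2s-2\le0$ for $s\le1$ this piece ends up bounded by $\epsilon\,CK_\lambda$. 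In the piece where the difference quotient hits $\tilde J=e^{\mu|\eta|^{1/2}}/w$ I would argue in the same way, extracting $\sqrt{\dt w/w}$ on each side by means of the quantitative properties of the weight to be established in Section~\ref{secweight} (comparability $w_k(\eta)\approx w_l(\xi)$ for comparable frequencies, and control of $|\partial_\eta w|/w$ by $\sqrt{\dt w/w}$ up to acceptable losses), which reconstructs $CK_w\ge\norma{\sqrt{\tfrac{\dt w}{w}}\,\tilde{A}f}{L^2}^2$ with a small prefactor. The contributions of $\langle\cdot\rangle^\sigma$ and of the $e^{\mu|k|^{1/2}}$ part of $J$ cost at most one bounded power of frequency, compensated by the comparability of the frequencies and the room in $\sigma$, and are sent to the last term.

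For the remaining ``error'' pieces, namely those producing no derivative of the weight, I would use not the smallness but the decay of $\uNe$. From $u=\nabla^\perp\Delta_L^{-1}f$ (the velocity appearing in \eqref{fu}) one has on the Fourier side $\hat u_k(\eta)=(i\eta,-ik)\big(k^2+(\eta-kt)^2\big)^{-1}\hat f_k(\eta)$; using $\langle\eta\rangle^2\langle\eta-kt\rangle^2\gtrsim\langle t\rangle^2$ as in \eqref{japbound} (valid as $k\ne0$ for every mode present) and absorbing a loss of at most three derivatives into the Gevrey margin $\lambda'<\lambda$, one gets $\norma{\uNe}{\mathcal{G}^{\lambda',\sigma}}\lesssim\epsilon\langle t\rangle^{-2}$ under the bootstrap assumption. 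Together with $\norma{Af}{L^2}\lesssim\epsilon$ for the other two copies of $f$, a Cauchy--Schwarz in $(k,\eta)$ and a Young-type estimate for the convolution against $\uNe$ close these pieces by $\epsilon^3\langle t\rangle^{-2}$; finally, summing the geometric series over $N\ge8$ preserves all three bounds.

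I expect the main obstacle to be the symbol-difference decomposition, especially its $\tilde J$-part, which forces a comparison of $w_k(\eta)$ with $w_l(\xi)$ and a bound on $\tilde J_k(\eta)-\tilde J_l(\xi)$ by $\sqrt{\dt w/w}$ up to affordable losses, and hence depends on the precise behaviour of the weight not yet available at this stage. A secondary subtlety is that one should treat separately the $\dz$ and $\dy$ parts of $\uNe\cdot\nabla f_N$: the $\dz$ part may leave the low-frequency factor of size far below $N$ (and then goes entirely into the error term), whereas the $\dy$ part does not, and only by exploiting the $\langle t\rangle^{-2}$ decay of both components of $u$ — at the price of a few derivatives — does one land on exactly the power $\langle t\rangle^{-2}$ claimed.
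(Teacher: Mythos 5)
Your overall plan — Fourier-side commutator, splitting $A_k(\eta)-A_l(\xi)$ into the Gevrey, $J$, and Sobolev pieces, and feeding the $\langle t\rangle^{-2}$ elliptic decay of $\uNe$ into the pieces that do not produce a CK term — matches the paper's treatment of the transport term. Your accounting for the Gevrey piece and the Sobolev piece is essentially correct.

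The gap is in your treatment of the $J$-piece. You propose to ``extract $\sqrt{\dt w/w}$ on each side'' uniformly, invoking comparability of $w_k(\eta)$ and $w_l(\xi)$ and a supposed bound of $|\partial_\eta w|/w$ by $\sqrt{\dt w/w}$. Neither step survives scrutiny: $w_k(t,\eta)$ is not a smooth function of $\eta$ in any useful sense (it is defined frequency-by-frequency through the resonant intervals), and more importantly $\dt w_k(t,\eta)$ vanishes identically for $t<\sqrt{|\eta|}$ and for $t>2|\eta|$, and is degenerate near those endpoints, so $\sqrt{\dt w/w}$ simply cannot dominate the $J$-difference there. The paper's proof handles this with a genuine case split that your proposal does not reproduce: for $t\le\tfrac12\min(\sqrt{|\xi|},\sqrt{|\eta|})$ one instead gains half a derivative from $|J_k(\eta)/J_l(\xi)-1|\lesssim\langle\eta-\xi,k-l\rangle|\xi|^{-1/2}e^{11\mu|k-l,\eta-\xi|^{1/2}}$ (Lemma \ref{lemmahalfd}) and closes by the elliptic decay of $\uNe$; only when $t$ lies in the resonant intervals for \emph{both} $(k,\eta)$ and $(l,\xi)$ does one extract $\sqrt{\dt w_k(\eta)/w_k(\eta)}\sqrt{\dt w_l(\xi)/w_l(\xi)}$ via \eqref{J/Jcap} and land in $CK_w$; and in the remaining regime (neither short times nor doubly resonant) one uses the improved ratio estimate \eqref{J/Jimproved} together with $|l,\xi|\lesssim t^2$ or a further split $|l|\gtrless 100|\xi|$ to obtain integrable time decay. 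Without this trichotomy in $t$, the claimed reconstruction of $CK_w$ with small prefactor does not follow, because the $J$-piece is not controlled by $\sqrt{\dt w/w}$ outside the resonant regime.

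A smaller point: the ``secondary subtlety'' you describe — treating $\dz$ and $\dy$ parts of $\uNe\cdot\nabla f_N$ separately because ``the $\dz$ part may leave the low-frequency factor of size far below $N$'' — is not how the transport term works here. The full derivative $\nabla$ hits $f_N$, which carries the high frequency, so both components contribute the same $|l,\xi|$-factor, and the distinction you sketch plays no role in this proposition.
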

  \begin{proposition}[Reaction]
  	\label{propreaction}
  	Under the bootstrap hypothesis 
  	\begin{equation*}
  	\sum_{N\geq 8}|R_N|\lesssim \epsilon CK_\lambda+\epsilon CK_w +\frac{\epsilon^3}{\langle t \rangle^2}.
  	\end{equation*}
  \end{proposition}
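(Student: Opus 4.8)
The strategy is to reproduce the reaction estimate of \cite{BM}, the zero-mean hypothesis serving only to discard every contribution of the zero mode. First I would pass to the Fourier side. By the paraproduct conventions of \ref{appelitt}, in $R_N$ the pair $(l,\xi)$, with $|l,\xi|\approx N$, is the high frequency, carried by $u_N=\nabla^\perp\Delta_L^{-1}f_N$, whose symbol (cf. \eqref{fu}) is $\lesssim|l,\xi|\,(l^2+|\xi-lt|^2)^{-1}$ and degenerates to $\sim|l,\xi|\,l^{-2}$ near the Orr time $t=\xi/l$ --- this is what produces the echoes; the pair $(k-l,\eta-\xi)$, with $|k-l,\eta-\xi|<N/8$, is the low frequency, carried by $\nabla\fNe$. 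The commutator structure produces the factor $A_k(t,\eta)-A_{k-l}(t,\eta-\xi)$, so that
\[
R_N=\sum_{k,l}\int\!\!\int\overline{\widehat{Af}_k(\eta)}\,\big(A_k(t,\eta)-A_{k-l}(t,\eta-\xi)\big)\,\widehat{u_N}_{l}(t,\xi)\cdot\big(i(k-l),i(\eta-\xi)\big)\,\widehat{\fNe}_{k-l}(\eta-\xi)\,d\xi\,d\eta.
\]
Since $\int_{\mathbb T}U^x\,dx\equiv0$ forces $\hf_0\equiv0$ (hence the modified velocity has no zero mode), only modes with $k,l,k-l\neq0$ contribute, which is exactly the simplification over \cite{BM}; the localization $|k-l,\eta-\xi|<N/8$ and $|l,\xi|\approx N$ gives $|k,\eta|\approx|l,\xi|\approx N$.

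The heart of the estimate is bounding the symbol difference. Writing $A=e^{\lambda(t)|\cdot|^s}\langle\cdot\rangle^\sigma J$, the exponential and Sobolev parts are handled by sub-additivity of $|\cdot|^s$ (for $s<1$ one can even gain from the improved triangle inequality of Lemma \ref{lemmaconc}, replaced for $s=1$ by the extra Gevrey-$1/\beta$ correction discussed after \eqref{def:Jtilde}):
\[
e^{\lambda|k,\eta|^s}\langle k,\eta\rangle^\sigma\lesssim e^{\lambda|l,\xi|^s}\langle l,\xi\rangle^\sigma\cdot e^{\lambda|k-l,\eta-\xi|^s}\langle k-l,\eta-\xi\rangle^\sigma,
\]
the first factor joining $\widehat{Af_N}$ and the second joining $\widehat{\AfNe}$. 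The delicate factor is the ratio of the weights, which is precisely what $w$ is built to control (Section \ref{secweight}). I would split according to the distance of $t$ from $\xi/l$: when $t$ is away from the resonant window the weights are comparable and the elliptic denominator contributes $|\xi-lt|^{-2}$, which after the dyadic sum produces a factor $\langle t\rangle^{-2}$; when $t$ lies in the resonant window around $\xi/l$ (nonempty only for $l$ small, the genuine echo regime) the factor $1/w_l(t,\xi)$ may be large, but by construction it never exceeds $e^{(\mu/2)|\xi|^{1/2}}\lesssim e^{(\mu/2)|l,\xi|^{1/2}}$, absorbed by the summand $e^{\mu|k|^{1/2}}$ of \eqref{def:J}, while on that window $\sqrt{\dt w/w}$ is bounded below by a suitable negative power of $\langle t\rangle$, which is exactly what turns the $\nabla\fNe$ derivative loss (matched against the degenerate elliptic factor) into a $CK_w$ gain.

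With the symbol difference under control, I would close by Cauchy--Schwarz in $(k,\eta)$ and in $(l,\xi)$, placing on each of the two frequency-$\approx N$ factors $\widehat{Af}$ and $\widehat{Af_N}$ a copy of the density $\sqrt{-\dot{\lambda}}\,|\nabla|^{s/2}Af$ or $\sqrt{\dt w/w}\,\tilde{A}f$ according to where the gain sat (exponential discrepancy $\to CK_\lambda$ away from the window, weight derivative $\to CK_w$ inside it); the remaining low-frequency factor $\sum_{k-l}\int e^{\lambda|k-l,\eta-\xi|^s}\langle k-l,\eta-\xi\rangle^{\sigma+1}|\widehat{\fNe}_{k-l}(\eta-\xi)|\,d\xi$ is summed in $\ell^1$ after one more Cauchy--Schwarz, the $\nabla$-loss being harmless because $\sigma>10$, and is $\lesssim\norma{Af}{L^2}\le2\sqrt{2}\,\epsilon$ by \eqref{B1}. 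Summing the dyadic pieces in $N$ via almost-orthogonality of the Littlewood--Paley projections of $f$ then gives $\sum_{N\ge8}|R_N|\lesssim\epsilon\,CK_\lambda+\epsilon\,CK_w+\epsilon^3\langle t\rangle^{-2}$, the last term collecting the off-resonant pieces where no $CK$ gain is available but a factor $\langle t\rangle^{-2}$ is, paired with two bootstrap powers of $\epsilon$.

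The genuinely hard point is the resonant case: one must show that on the resonant window the product of the weight ratio, the $\nabla\fNe$ loss, and the degenerate elliptic factor $|l,\xi|\,l^{-2}$ is controlled by $\sqrt{\dt w/w}$ up to constants, while $1/w$ never grows faster than $e^{(\mu/2)|\eta|^{1/2}}$. This rests entirely on the properties of $w$ from Section \ref{secweight} --- the underlying toy model being precisely the iterated reaction --- and is the combinatorial core of \cite{BM}; everything else is routine paraproduct bookkeeping of the same flavour as the transport estimate of Proposition \ref{proptransport}.
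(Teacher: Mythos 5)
Your overall strategy --- Fourier-side paraproduct with $(l,\xi)$ on $u_N$, exchange of $A_k(\eta)$ for a multiplier at $(l,\xi)$ times a low-frequency factor, splitting according to the resonance of $t$ with $\xi/l$, elliptic decay off resonance, absorption into $CK_w$ on resonance, closing by Cauchy--Schwarz and bootstrap --- is the same as the paper's. One presentational remark: the commutator $A_k(\eta)-A_{k-l}(\eta-\xi)$ is \emph{not} small here (unlike in the transport term, where both frequencies are comparable and high), so the paper does not gain from the difference; it simply splits $R_N=R_N^1+R_N^3$ and treats $R_N^3$ as a separate, easy commutator remnant. Your subsequent analysis does the same, but the displayed formula can mislead.

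The genuine gap is in the resonant case, exactly the part you label as hard. Your heuristic --- that $1/w_l(t,\xi)\lesssim e^{(\mu/2)|\xi|^{1/2}}$ is ``absorbed by the summand $e^{\mu|k|^{1/2}}$ of \eqref{def:J}'' --- does not work: on the resonant window $|k|\approx|l|\lesssim\sqrt{|\xi|}$, so $e^{\mu|k|^{1/2}}$ is of order $e^{\mu|\xi|^{1/4}}$, far too small to absorb the weight. What must actually be controlled is the \emph{ratio} $J_k(\eta)/J_l(\xi)$, not $J_l(\xi)$ alone; this requires Lemma \ref{lemmaJ} in conjunction with the trichotomy Lemma \ref{trichotomy}. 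Concretely, the paper splits $R_N^1$ into four pieces according to whether $t\in I_{k,\eta}$ and/or $t\in I_{l,\xi}$ (with a further split of the $R_N^{R,NR}$ case on whether $t\in I_{k,\xi}$), and in each case applies the appropriate ratio bound \eqref{J/Jgeneral}, \eqref{J/Jimproved}, \eqref{J/Jlxi} or \eqref{J/Jcap}, the off-resonance elliptic estimate \eqref{nabla/deltaL}, and the on-resonance key claim \eqref{replacedtww}. That case analysis, rather than the Littlewood--Paley bookkeeping, is where the proof lives; your outline defers it entirely to \cite{BM} and substitutes a heuristic with a specific error.
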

\begin{proposition}[Remainders]
	\label{propremainders}
	Under the bootstrap hypothesis 
	\begin{equation*}
	\sum_{N\in\mathbf{D}}\sum_{N/8\leq N'\leq 8N}|\mathcal{R}_{N,N'}|\lesssim\frac{\epsilon^3}{\langle t \rangle^2}.
	\end{equation*}
\end{proposition}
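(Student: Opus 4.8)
The plan is to estimate each term $\mathcal{R}_{N,N'}$ by brute force, taking advantage of the fact that it is a high--high interaction with \emph{comparable} input frequencies: on the support one has $|k-l,\eta-\xi|\approx N$ on the $u_N$ factor and $|l,\xi|\approx N'\approx N$ on the $f_{N'}$ factor, while the output obeys $|k,\eta|\lesssim N$. For this reason I do not need the commutator structure at all (it is there only to help the transport and reaction terms) and simply bound $|A_k(t,\eta)-A_l(t,\xi)|\le A_k(t,\eta)+A_l(t,\xi)$. Expanding $\mathcal{R}_{N,N'}$ on the Fourier side and using $|l,\xi|\lesssim\langle N\rangle$ for the factor coming from $\nabla$, the task reduces to bounding
\[
\langle N\rangle\sum_{k,l}\iint\big|\widehat{Af}_k(\eta)\big|\,\big(A_k(t,\eta)+A_l(t,\xi)\big)\,\big|\widehat{u_N}_{k-l}(\eta-\xi)\big|\,\big|\widehat{f_{N'}}_l(\xi)\big|\,d\xi\,d\eta .
\]

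Next I extract the $\langle t\rangle^{-2}$ decay from the elliptic operator in $u_N=\nabla^\perp\Delta_L^{-1}f_N$ (see \eqref{fu}). This is where the zero--mean hypothesis enters: it forces $\widehat{u}_0\equiv 0$, so only frequencies with $|k-l|\ge 1$ occur, and then \eqref{japbound} applied to $(k-l,\eta-\xi)$, together with $|\eta-\xi|\lesssim N$ on the support of $u_N$, gives
\[
(k-l)^2+\big(\eta-\xi-(k-l)t\big)^2\ \gtrsim\ \frac{\langle t\rangle^2}{\langle\eta-\xi\rangle^2}\ \gtrsim\ \frac{\langle t\rangle^2}{\langle N\rangle^2},\qquad\text{hence}\qquad \big|\widehat{u_N}_{k-l}(\eta-\xi)\big|\ \lesssim\ \frac{\langle N\rangle^3}{\langle t\rangle^2}\,\big|\widehat{f_N}_{k-l}(\eta-\xi)\big| .
\]
In the general non-zero-mean case $u$ also contains the zero mode, which carries no such decay; this is one of the places our assumption simplifies matters.

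The only genuinely delicate step is to move the multiplier $A$ back onto $f_N$ and $f_{N'}$, so that what remains is governed by $E(t)$. For the $A_l(t,\xi)$ piece this is immediate, since $A_l(t,\xi)\big|\widehat{f_{N'}}_l(\xi)\big|=\big|\widehat{Af_{N'}}_l(\xi)\big|$ and $\langle N\rangle^{C}\big|\widehat{f_N}\big|\lesssim\big|\widehat{Af_N}\big|$ because $A\ge e^{\lambda(t)|\cdot|^s}$ beats any polynomial. For the $A_k(t,\eta)$ piece I need $A_k(t,\eta)\langle N\rangle^{C}\lesssim A_{k-l}(t,\eta-\xi)\,A_l(t,\xi)$; bounding $J$ crudely by $J_k(t,\eta)\lesssim e^{C\mu|k,\eta|^{1/2}}$ (using $1/w\lesssim e^{(\mu/2)|\eta|^{1/2}}$) and $\langle k,\eta\rangle\lesssim\langle k-l,\eta-\xi\rangle\langle l,\xi\rangle$, this reduces to
\[
e^{-\lambda(t)\big(|k-l,\eta-\xi|^s+|l,\xi|^s-|k,\eta|^s\big)}\;e^{C\mu|k,\eta|^{1/2}}\;\langle N\rangle^{\sigma+C}\ \lesssim\ 1 .
\]
Since $N\approx N'$ the two high frequencies are comparable, so the improved triangle inequality of Lemma \ref{lemmaconc} yields $|k-l,\eta-\xi|^s+|l,\xi|^s-|k,\eta|^s\gtrsim N^s$; as $\lambda(t)$ is bounded below and $s>1/2$, the gain $e^{-c\lambda(t)N^s}$ beats the loss $e^{C\mu|k,\eta|^{1/2}}\langle N\rangle^{\sigma+C}\lesssim e^{C'N^{1/2}}\langle N\rangle^{\sigma+C}$ (everything is bounded for bounded $N$, and the exponential wins for large $N$). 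I expect this inequality to be the main obstacle, although it is essentially bookkeeping. In the borderline analytic case $s=1$, where this triangle inequality degenerates, one carries along an auxiliary Gevrey--$1/\beta$ correction with $1/2<\beta<1$, as remarked after \eqref{def:Jtilde}: its triangle inequality still produces a gain $e^{-cN^\beta}$ with $\beta>1/2$, again dominating the loss.

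With these reductions in hand, an application of Young's inequality for convolutions (the residual power $\langle N\rangle$ lost when passing $\widehat{Af_N}$ from $L^1$ to $L^2$, using the frequency localization of $f_N$ at $\approx N$, being absorbed into the exponential weight exactly as above) gives $|\mathcal{R}_{N,N'}|\lesssim \langle t\rangle^{-2}\norma{Af}{L^2}\norma{Af_N}{L^2}\norma{Af_{N'}}{L^2}$. Summing over $N/8\le N'\le 8N$ (an $O(1)$ set of dyadic numbers, Cauchy--Schwarz) turns $\sum_{N'}\norma{Af_{N'}}{L^2}$ into $\norma{Af_{\approx N}}{L^2}$, and summing in $N$ with Cauchy--Schwarz and Littlewood--Paley almost orthogonality gives $\sum_N\norma{Af_N}{L^2}\norma{Af_{\approx N}}{L^2}\lesssim\norma{Af}{L^2}^2$; together with the bootstrap bound \eqref{B1} this yields
\[
\sum_{N\in\mathbf{D}}\ \sum_{N/8\le N'\le 8N}|\mathcal{R}_{N,N'}|\ \lesssim\ \frac{\norma{Af}{L^2}^3}{\langle t\rangle^2}\ \lesssim\ \frac{\epsilon^3}{\langle t\rangle^2},
\]
which is the claim.
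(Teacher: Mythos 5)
Your proposal is correct and takes essentially the same route as the paper: treating the two pieces of the commutator separately (equivalently, bounding $|A_k-A_l|\le A_k+A_l$), using the comparable high--high frequencies together with the improved triangle inequality \eqref{imptri} and the crude bound $J\lesssim e^{C\mu|\cdot|^{1/2}}$ to redistribute $A$, extracting the $\langle t\rangle^{-2}$ decay from $\Delta_L^{-1}$ via \eqref{japbound} (the paper packages this as Lemma \ref{ellipticlemma}), and closing with \eqref{CS+Young} and Littlewood--Paley orthogonality. Your bookkeeping in the final sum over $N,N'$ is in fact slightly more careful than the paper's displayed intermediate line, but the argument is the same.
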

If we are able to prove those propositions, thanks to \eqref{dtE} we get that 
\begin{equation*}
E(t)+(1-\epsilon)\int_{1}^{t}(CK_\lambda(\tau)+CK_w(\tau))d\tau \lesssim E(1)+\epsilon^3,
\end{equation*}
hence thanks to bootstrap hypothesis, we get that, for $\epsilon$ sufficiently small, proposition \ref{mainprop} holds. \\ \indent
To conclude, the estimates on the velocity follows exactly as in the linear case, in fact one has 
\begin{align*}
\norma{U^x}{L^2}&=\norma{\dy \Delta^{-1}\omega}{L^2}=\norma{(\dy-t\dz)\Delta_L^{-1}f}{L^2}\\
&\lesssim \frac{1}{\langle t \rangle}\norma{f}{H^1} \lesssim \frac{1}{\langle t \rangle}\norma{Af}{L^2}\lesssim \frac{\epsilon}{\langle t \rangle},
\end{align*}
analogously for $U^y$, hence proving all the estimates in theorem \ref{maintheorem}.
\begin{remark}
	In our setting we pass from the standard reference frame to the one which follows the Couette flow in a trivial way. In the general dynamics, in order to follow the background shear flow, where one is able to obtain estimates, the change of variable to perform it is not trivial at all. Also one should check that it is possible to perform the change of variable in a sufficiently smooth way, because in Gevrey spaces is a delicate task. For this reason in \cite{BM} they have to be really precise in keeping under control the background shear flow.
\end{remark}
      \section{Construction of the weight}
      \label{secweight}
      \subsection*{Heuristic Ideas}
      Consider the equations \eqref{fu}. We want to construct a weight that mimic the 'worst possible case'. In particular, we know that we have to pay regularity on $u$ to have decay, so we are interested to see the interactions between $u$ at high frequencies and $f$ at low ones, since we have done the splitting with the paraproduct. So we are considering 
      \begin{equation*}
      \dt f= -\nabla^\perp (\Delta_L^{-1}f)\cdot  \nabla f_{lo}\approx \dy (\Delta_L^{-1}f)\dz f_{lo}, 
      \end{equation*}
      where we have approximated with the most dangerous case, namely derivatives in $y$. So on the Fourier side we are left with 
      \begin{equation}
      \label{approxw}
     \begin{split}
      \dt \hf(t,k,\eta)&=-\sum_{l\neq 0}\int \frac{\xi (k-l)}{l^2+|\xi-lt|^2}\hf (l,\xi)\hf_{lo}(t,k-l,\eta-\xi)d\xi\\
      &\approx -\sum_{l\neq 0}\frac{\eta (k-l)}{l^2+|\eta-lt|^2}\hf (l,\eta)\hf_{lo}(t,k-l,0)
     \end{split}
      \end{equation}
      since being $\hf_{lo}$ in low frequencies, we have approximated $\eta=\xi$.\\
       For the same reason we are interested in the case $l=k+1$ or $l=k-1$, for example. Which means to see how the $k$-th mode is influenced by nearby frequencies. The mechanism from low to high frequencies it is somehow a natural part, where one just pays some standard regularity to control that, for example the travelling on high frequencies is evident even in the linear case. \\ \indent
       Instead the problem is to control a \textit{high-to-low cascade}, which creates the so called phenomenon of echoes, observed numerically \cite{vanneste2002nonlinear,vanneste1998strong} and experimentally \cite{yu2002diocotron,yu2005phase} in the context of 2D Euler. Mathematically has some relation with \textit{plasma echoes} present in Landau damping, see \cite{bedrossian2016landau,MV}.\\ \indent
       We consider the mode $k$, that at the \textit{resonant time} $t=\eta/k$, has a strong effect on the $(k-1)$-th mode if $\eta/k^2>1$, otherwise it is controlled. Then the mode $(k-1)$ influence the $(k-2)$ and so on, creating this high to low cascade. The simplest model of this effect is built in the following way. Assume that $\hf_{lo}=O(\beta)$ and absolute values everywhere on the previous equations, the toy model under consideration is the following 
       \begin{equation}
       \label{toyfRfNR}
       \begin{split}
       \dt f_R &=\beta \frac{k^2}{|\eta|} f_{NR}\\
       \dt f_{NR}&= \beta \frac{|\eta|}{k^2+|\eta-kt|^2}f_R
       \end{split}
       \end{equation}
       where $f_R$ is the \textit{Resonant} mode, i.e. $k$, and $f_{NR}$ the \textit{Non-Resonant}, i.e. $k-1$. The factor $k^2/|\eta|$ in the ODE for $f_R$ is a rough upper bound of the strongest interaction that a non resonant mode has on a resonant one, for times nearby $t=\eta/k$. In fact for example, consider \eqref{approxw} with $l=k-1$ at time $t=\eta/k$, to get
       \begin{equation*}
       \dt \hf_k\approx \beta \frac{k^2|\eta|}{k^2(k-1)^2+\eta^2}\hf_{k-1}\leq \beta \frac{k^2}{|\eta|}\hf_{k-1}.
       \end{equation*}
       The weight will be constructed by estimating the behaviour of \eqref{toyfRfNR} near times $\eta/k$. \\ \indent
       First of all let us construct time intervals centred in the resonant time $\eta/k$. Since we are interested in $\eta/k^2>1$, this means that $k<\sqrt{\eta}$, hence $t>\sqrt{\eta}$. Then all the resonant times are in the interval $[\sqrt{\eta},\eta]$, so we want to divide it with subintervals centred in $\eta/k$ for $k=1,...,\floor{\sqrt{\eta}} $, where $\floor{\cdot}$ is the integer part. In general one has intervals of the form $[\eta/k-a_k,\eta/k+b_k]$. One possibility is to choose $a_k=b_{k+1}$, i.e the mid point between $\eta/(k+1)$ and $\eta/k$. In particular $a_k$ is given by
       \begin{equation*}
       \frac{\eta}{k}-a_k=\frac{\eta}{k+1}+a_k\Rightarrow a_k=\frac{\eta}{2k(k+1)}.
       \end{equation*}
       So define $t_{|k|,\eta}=|\frac{\eta}{k}|-a_{|k|}$ and the critical intervals as
       \begin{equation}
       \label{defIketa}
       I_{k,\eta}=\begin{cases}
       [t_{|k|,\eta},t_{|k|-1,\eta}] \ \text{if $\eta k \geq 0$ and $1\leq k\leq \floor{\sqrt{\eta}}$}, \\ \emptyset \ \text{otherwise}.
       \end{cases}
       \end{equation}
       The restrictions on $\eta k \geq 0$, are due simply to the fact that for positive times are the only frequencies that can be resonant. This are the intervals considered in \cite{BM}\\
       For convenience we define also 
       \begin{equation}
       \label{defItildeketa}
       \tilde{I}_{k,\eta}=\begin{cases}
       \displaystyle\bigg[\frac{\eta}{k}-\frac{\eta}{k^2},\frac{\eta}{k}+\frac{\eta}{k^2}\bigg] \ \text{if $\eta k \geq 0$ and $1\leq k\leq \floor{\sqrt{\eta}}$}, \\ \emptyset \ \text{otherwise},
       \end{cases}
       \end{equation}
       because we will use those intervals to construct the weight. So let us recall \cite[Proposition 3.1]{BM}, which give an estimate on \eqref{toyfRfNR}. We will also give a short proof.
       \begin{proposition}
       	\label{propGrowth}
       	Let $\tau=t-\eta/k$ and consider $(f_R(\tau),f_{NR}(\tau))$ solution of \eqref{toyfRfNR} with $f_R(-\eta/k^2)=f_{NR}(-\eta/k^2)=1$.\\ 
       	Then $f_{R}\approx\displaystyle  \frac{k^2}{|\eta|}(1+|\tau|)f_{NR}$. \\More precisely, there exist a constant $C$ such that for all $\beta <1/2$ and $\eta/k^2\geq 1$, 
       	\begin{align*}
       	f_R(\tau)&\leq C\bigg(\frac{k^2}{\eta}(1+|\tau|)\bigg)^{-C\beta}, \ \ -\frac{\eta}{k^2}\leq \tau \leq 0,\\
       	f_{NR}(\tau)&\leq C\bigg(\frac{k^2}{\eta}(1+|\tau|)\bigg)^{-C\beta-1}, \ \ -\frac{\eta}{k^2}\leq \tau \leq 0,\\
       	       	f_{R}(\tau)&\leq C\bigg(\frac{\eta}{k^2}\bigg)^{C\beta}(1+|\tau|)^{C\beta+1}, \ \ 0\leq\tau\leq\frac{\eta}{k^2},\\
       	       	 	f_{NR}(\tau)&\leq C\bigg(\frac{\eta}{k^2}\bigg)^{C\beta+1}(1+|\tau|)^{C\beta}, \ \ 0\leq\tau\leq\frac{\eta}{k^2},      	
       	\end{align*}
       	where it is fixed $3/2<1+2C\beta<10$.
       \end{proposition}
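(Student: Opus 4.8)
The plan is to analyze the ODE system \eqref{toyfRfNR} separately on the two half-intervals $[-\eta/k^2,0]$ and $[0,\eta/k^2]$, using the observation that on each side one of the two coefficients can be effectively treated as a power of $\tau=t-\eta/k$. Write $a:=\beta k^2/|\eta|$ and $b(\tau):=\beta|\eta|/(k^2+|\eta-kt|^2)=\beta|\eta|/(k^2+k^2\tau^2)$, so that $\dot f_R=a\,f_{NR}$ and $\dot f_{NR}=b(\tau)f_R$; note $b(\tau)=(\beta|\eta|/k^2)\langle\tau\rangle^{-2}$ when $|\eta-kt|=k|\tau|$, hence $a\,b(\tau)=\beta^2\langle\tau\rangle^{-2}$. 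The product $a b$ being an integrable, slowly varying quantity is exactly what makes the growth only polynomial in $\tau$; this is the structural fact I would isolate first.

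First I would handle the interval $0\le\tau\le\eta/k^2$, where I expect genuine growth. Introduce $E(\tau):=f_R(\tau)+(\text{something})f_{NR}(\tau)$ or, more robustly, work with the energy-type quantity $f_R^2 - (a/b(\tau))^{-1}\cdot(\dots)$; concretely, since $\dot f_R = a f_{NR}$, I would derive a second-order scalar ODE for $f_R$, namely $\ddot f_R = \dot a f_{NR} + a b(\tau) f_R = a b(\tau) f_R$ (using $\dot a=0$), i.e. $\ddot f_R = \beta^2\langle\tau\rangle^{-2}f_R$. This is an Euler-type equation: comparing with the exact Euler equation $\tau^2 g'' = \beta^2 g$ whose solutions are $\tau^{(1\pm\sqrt{1+4\beta^2})/2}$, and accounting for the $1$ in $\langle\tau\rangle^2$ near $\tau=0$, one gets $f_R(\tau)\lesssim (1+|\tau|)^{(1+\sqrt{1+4\beta^2})/2}\lesssim (1+|\tau|)^{1+C\beta}$ for an appropriate $C$, after absorbing the initial-data normalization and the $\eta/k^2$-dependence at $\tau=0$ into the stated prefactors $(\eta/k^2)^{C\beta}$; then $f_{NR}=\dot f_R/a$ together with $a\approx (\eta/k^2)^{-1}\beta$ produces the extra factor $\eta/k^2$ and lowers the exponent of $(1+|\tau|)$ by one, giving the claimed $f_{NR}\lesssim (\eta/k^2)^{C\beta+1}(1+|\tau|)^{C\beta}$. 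The normalization $f_R(-\eta/k^2)=f_{NR}(-\eta/k^2)=1$ must be propagated through $\tau=0$, which is where the $(\eta/k^2)^{\pm C\beta}$ factors enter; I would get the value of $(f_R,f_{NR})$ at $\tau=0$ from the analysis of the left interval.

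On $-\eta/k^2\le\tau\le 0$ I would instead prove decay as $\tau\to 0^-$ (equivalently, growth backward in time away from the resonance), using the same scalar equation $\ddot f_R=\beta^2\langle\tau\rangle^{-2}f_R$: matching at $\tau=0$ to the data $(1,1)$ scaled at $\tau=-\eta/k^2$ via the comparison solutions $(1+|\tau|)^{(1\pm\sqrt{1+4\beta^2})/2}$ yields $f_R(\tau)\lesssim (\tfrac{k^2}{\eta}(1+|\tau|))^{-C\beta}$ and, again via $f_{NR}=\dot f_R/a$, the extra power giving $f_{NR}\lesssim (\tfrac{k^2}{\eta}(1+|\tau|))^{-C\beta-1}$. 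To make the comparison rigorous I would use a Grönwall/bootstrap argument: assume $f_R(\tau)\le C_0\Phi(\tau)$ for the candidate profile $\Phi$, substitute into $\dot f_{NR}=b(\tau)f_R$ and $\dot f_R=a f_{NR}$, integrate, and check the constants close for $\beta<1/2$; positivity of $f_R,f_{NR}$ (clear from the ODE with positive data and positive coefficients) keeps all the absolute values harmless. The constraint $3/2<1+2C\beta<10$ is just the statement that $C\beta$ is pinned in a fixed range, which fixes $C$ once $\beta$ is small; I would record it at the end.

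The main obstacle I anticipate is the matching at the resonant time $\tau=0$: the two one-sided analyses must be glued so that the constants $(\eta/k^2)^{\pm C\beta}$ appearing in the four displayed bounds are mutually consistent, and one has to be careful that the "$1+|\tau|$" regularization (versus bare $|\tau|$) does not spoil the Euler-equation scaling near $\tau=0$ — this is exactly the region where $\langle\tau\rangle^2$ differs from $\tau^2$. A clean way around it is to not track $f_R,f_{NR}$ through $\tau=0$ at all, but to prove the four bounds as statements about the ratio $f_R/f_{NR}$ and about $f_R f_{NR}$ (or $f_R^2$), for which the key identity is $\frac{d}{d\tau}\big(f_R^2 - \tfrac{a}{b(\tau)}^{-1}\cdots\big)$; I would check whether $f_R^2 - (b(\tau)/a)f_{NR}^2$ or a similar bilinear invariant has a sign-definite derivative, since that immediately yields two-sided control of the ratio and reduces everything to a single scalar Grönwall estimate on $f_R$.
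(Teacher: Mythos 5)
Your main route---contracting the $2\times 2$ system to the scalar second-order ODE $\ddot f_R=\beta^2\langle\tau\rangle^{-2}f_R$ and comparing with the Euler equation $\tau^2 g''=\beta^2 g$---is genuinely different from what the paper does, and in principle it works, but as written it is a plan rather than a proof. To make the comparison with the Euler equation rigorous you need a Sturm-type comparison lemma (if $p_1\le p_2$, both positive, then the positive solution of $\ddot g_1=p_1g_1$ is dominated by that of $\ddot g_2=p_2g_2$ with matched data), plus a separate elementary treatment of the neighbourhood $|\tau|\lesssim 1$ where $\langle\tau\rangle^{-2}$ is not comparable to $\tau^{-2}$; and to convert a bound on $f_R$ into one on $f_{NR}$ you should not use $f_{NR}=\dot f_R/a$ directly (it requires controlling $\dot f_R$, which is a separate estimate) but rather integrate $\dot f_{NR}=b(\tau)f_R$ using the already-established bound on $f_R$. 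None of these is hard, but none is done.

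What the paper actually does is your ``alternative'' from the last paragraph, and it is both shorter and self-contained: for $\tau<0$ one differentiates the bilinear quantity $Q=\tfrac{1}{a}f_R^2-\tfrac{1}{b(\tau)}f_{NR}^2=\tfrac{\gamma}{\beta}f_R^2-\tfrac{1+\tau^2}{\beta\gamma}f_{NR}^2$ and finds $\dot Q=-\tfrac{2\tau}{\beta\gamma}f_{NR}^2\ge 0$, whence $f_{NR}\lesssim \tfrac{\gamma}{1+|\tau|}f_R$; plugging this back into $\dot f_R=af_{NR}$ gives a scalar first-order Gr\"onwall inequality $\dot f_R\lesssim \tfrac{C\beta}{1-\tau}f_R$, which integrates to the claimed bounds. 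Note that the weights in $Q$ are the \emph{reciprocals} of the ODE coefficients $a$ and $b(\tau)$, not their ratio: the candidate you propose, $f_R^2-(b(\tau)/a)f_{NR}^2$, does not produce a sign-definite derivative, whereas $\tfrac{1}{a}f_R^2-\tfrac{1}{b(\tau)}f_{NR}^2$ does because the $f_Rf_{NR}$ cross terms cancel exactly and only $\dot b$ survives. If you pursue the invariant route, this is the one to use; it avoids the Euler comparison and the matching-at-$\tau=0$ concern you flag, since the same identity covers both sides of the resonance with the appropriate sign.
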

   \begin{remark}
   	\label{remarkgrowth}
   	Overall in the interval $[-\eta/k^2,\eta/k^2]$, both $f_R$ and $f_{NR}$, are mostly amplified by a factor $C(\frac{\eta}{k^2})^{1+2C\beta}$.
   \end{remark}
   \begin{proof}
   	Let us rewrite the system \eqref{toyfRfNR} in the variable $\tau$, call $\gamma=\displaystyle \frac{|\eta|}{k^2}$, to get
   	\begin{equation}
   	\label{proofprop}
   	\begin{split}
   	\dtau f_R &=\frac{\beta}{\gamma} f_{NR},\\
   	\dtau f_{NR}&= \frac{\beta\gamma}{1+\tau^2}f_R.
   	\end{split}
   	\end{equation}
   	First of all notice that both $f_R$, $f_{NR}$ are increasing, so they will remain positive. Then consider $\tau <0$. Multiply $\eqref{proofprop}_1$ by $(\gamma/\beta)  f_R$, $\eqref{proofprop}_2$ by $\displaystyle \frac{(1+\tau^2)}{\beta\gamma}f_{NR}$ and subtract, to have 
   	\begin{equation*}
   	\frac{1}{2}\frac{d}{d\tau}\bigg(\frac{\gamma}{\beta} f^2_R- \frac{1+\tau^2}{\beta\gamma}f^2_{NR}\bigg)=-\frac{\tau}{\beta\gamma}f^2_{NR}\geq 0, 
   	\end{equation*}
   	since $\tau<0$. This implies
   	\begin{equation}
   	\label{proofprofN}
   	f_{NR}\lesssim \frac{C\gamma}{1+|\tau|}f_{R},
   	\end{equation}
   for some $C$. Plugging the previous estimate on $\eqref{proofprop}_2$, one gets 
   	\begin{equation*}
   	\dtau f_{R}\lesssim \frac{C\beta}{1-\tau}f_{R},
   	\end{equation*}
   	since $\tau \leq0$. Integrating in the interval $[-\eta/k^2,0]$, we have   	
   	\begin{equation}
   	f_R \lesssim \bigg(\frac{k^2}{\eta}(1+|\tau|)\bigg)^{-C\beta} \label{proofpropfR}.
   	\end{equation}
   	This proves the first inequality of the Proposition, plugging \eqref{proofpropfR} into \eqref{proofprofN}, recalling $\gamma= \eta/k^2$, we have also the second one.\\
   	The inequalities for positive times follows by the same argument, just notice that $f_{R}(0)\lesssim (\eta/k^2)^{C\beta}$ and analogously for $f_{NR}(0)$.
   \end{proof}
\subsection*{Maximal growth}
Now let us restate the argument in \cite[Lemma 3.1]{BM} to prove the maximal growth of the weight.\\ 
As pointed in remark \ref{remarkgrowth}, the maximal growth over an interval like $I_{k,\eta}$, defined in \eqref{defIketa}, is something like $C(\frac{\eta}{k^2})^{1+2C\beta}$. This process is significant over the whole interval $[\sqrt{\eta},\eta]$, in which one can accumulate such growth for every $k=1,...,\floor{\sqrt{\eta}}$. Calling $c=1+2C\beta$ and $N=\floor{\sqrt{\eta}}$, this means that one can accumulate the following maximal growth 
\begin{equation*}
M_G=\bigg(\frac{\eta}{N^2}\bigg)^c\bigg(\frac{\eta}{(N-1)^2}\bigg)^c\dots \frac{\eta}{1}^c=\bigg(\frac{\sqrt{\eta}^N}{N!}\bigg)^{2c}.
\end{equation*}
Then, if $\eta\gg1$, thanks to Stirling's formula, $N!\approx \sqrt{2\pi N}N^Ne^{-N}$, we infer 
\begin{equation}
\label{MG}
M_G\approx\bigg(\frac{\sqrt{\eta}^N}{N^N}\bigg)^{2c}\frac{e^{2cN}}{\eta^c}\lesssim \frac{1}{\eta^{\mu/8}}e^{\frac{\mu}{2}|\eta|^{1/2}},
\end{equation}
since $N=\floor{\sqrt{\eta}}$, and defining $\mu=4c$. This is the main fact which explain why one has to take $s\geq1/2$, because it cannot be excluded a priori such a growth based on the toy model. 
\subsection{Construction of $w$}
In \cite{BM}, they construct the weight $w$ starting at time $2\eta$, with $w=1$, and then going backward in time up to $\sqrt{\eta}$, accumulating the \textit{inverse} of the predicted maximal growth \eqref{MG}.\\ \indent
Instead here we construct the weight forward in time, in a less rigorous and precise way. For the real construction see \cite[Section 3]{BM}.\\ \\ \indent
The basic idea is that by the expected maximal growth \eqref{MG}, we need to fit a growth of $e^{\mu\sqrt{\eta}}$ in the interval $[\sqrt{\eta},\eta]$. Notice that we define with $\mu$ and not $\mu/2$ in order to absorb some Sobolev contribution. Then fix $k,\eta$, and let us start constructing $w_k(t,\eta)$. To simplify the notation, think as $\sqrt{\eta}$ to be an integer, in remark \ref{remcorr} we explain how it works in general.\\
By the previous argument, it is reasonable to start with
\begin{equation}
\label{wfort<seta}
w_k(t,\eta)=e^{-\mu\sqrt{\eta}} \ \ \text{for $t<\sqrt{\eta}$},
\end{equation}
 Then we will approximate the non resonant modes by an exponential, namely we want to start at time $t=\sqrt{\eta}$ with the value predicted in \eqref{wfort<seta}, and arrive at time $t=2\eta$ at $1$. Then just by a quadratic connection between those values, we get
\begin{equation}
\label{defwNR}
w_{NR}(t,\eta)=\exp\bigg[{-\mu\bigg(\sqrt{\eta}-\frac{ \sqrt{\eta}}{(2\eta-\sqrt{\eta})^2}(t-\sqrt{\eta})^2}\bigg)\bigg].
\end{equation}
	Clearly the true construction is based on the bounds provided in \ref{propGrowth}, which are better than exponential ones. But in the end, in proving properties, one can obtain at most exponential bounds, as seen in \eqref{MG}. \\
	Since the multiplier for the solution contains $1/w$, with respect to \cite{BM} we are assigning more regularity for more time.
\begin{remark}
	\label{remarkdtw}
	We set up a quadratic connection since we want $\dt w$ to be continuous near $t=\sqrt{\eta}$, which before was $0$, hence it is the simplest choice to retain this property. 
\end{remark}
Finally, since by the proposition \ref{propGrowth}, we know that for $|\tau|<\frac{\eta}{k^2}$ it holds $f_R\approx \displaystyle \frac{k^2}{|\eta|}(1+|\tau|)f_{NR}$. Recalling the definition of $\tilde{I}_{k,\eta}$ given in \eqref{defItildeketa}, we define 
\begin{equation}
\label{defwR}
\begin{split}
w_R(t,\eta)=\frac{k^2}{\eta}\bigg(1+a_{k,\eta}\bigg|t-\frac{\eta}{k}\bigg|\bigg)w_{NR}(t,\eta) \ \ \text{for} \ t \in \tilde{I}_{k,\eta}
\end{split}
\end{equation}
where $a_{k,\eta}$ is chosen such that $\frac{k^2}{\eta}(1+a_{k,\eta}\frac{\eta}{k^2})=1$. In this way $w_R$ and $w_{NR}$ are exactly the same in the extremes of the interval $\tilde{I}_{k,\eta}$. Notice that at time $t=\frac{\eta}{k}$, $w_R(\frac{\eta}{k},\eta)=\frac{k^2}{\eta}w_{NR}(\frac{\eta}{k},\eta)$. \\ 
Then we define the weight as 
\begin{equation}
\label{defw}
w_k(t,\eta)=\begin{cases}
e^{-\mu\sqrt{\eta}} \ \ &t<\sqrt{\eta},\\
w_{NR}(t,\eta) \ \ &t\in [\sqrt{\eta},2\eta] \setminus \tilde{I}_{k,\eta},\\
w_R(t,\eta) \ \ &t\in \tilde{I}_{k,\eta}\\
1 \ \ &t>2\eta.
\end{cases}
\end{equation}
\begin{remark}
	\label{remcorr}
	To construct more precisely the weight, one has to substitute $\sqrt{\eta}$ with $t_{\floor{\sqrt{\eta}},\eta}$. Also, in defining $w_R$, see \eqref{defwR}, it would be more precise to consider the $I_{k,\eta}$, but them are not symmetric as $\tilde{I}_{k,\eta}$, so one should split the interval in two pieces and chose two constant to fit the values at the boundaries of the interval. \\
	We avoided to be really precise, since the construction is already a sort of approximation.
\end{remark}
\begin{remark}
	The weight of Bedrossian and Masmoudi is much sharper than our weight. In particular we are imposing the same exponential scale over the whole interval, so assigning more regularity with respect to the weight of \cite{BM}. This fact reflect also in the resonant interval, where in our case we see a peak, since it grows with the same scale. In the case of \cite{BM} there is not an evident peak since in a single interval the behaviour is polynomial. But notice that both $w_k(t,\eta)^{-1}$ are exactly increased by the same quantity $\eta/k^2$ for the resonant time.
\end{remark}
\begin{figure}[]	
	  \centering
	\hspace*{-3.2cm} 
	\includegraphics[scale=.5]{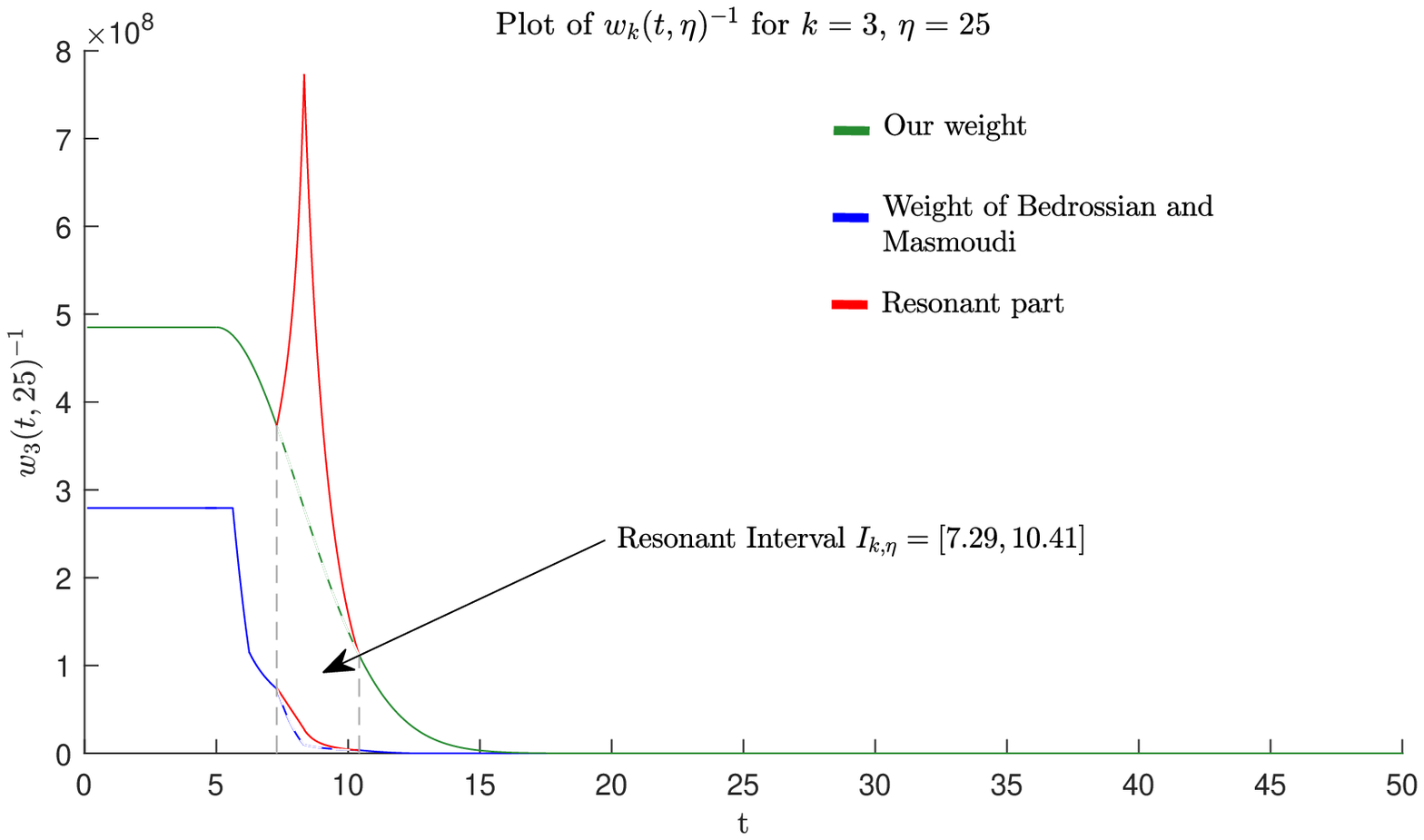}
	
	\caption{It is shown the plot of the weight we have constructed compared with the weight of Bedrossian and Masmoudi built in \cite{BM}. To visualize things we have chosen $c=1.2$ and $\mu=4$ since for the true $c>3/2$ and $\mu=4c$, things are more difficult to clearly visualize in a single plot, but the behaviour is essentially the one of the picture. For our weight we have used the intervals $I_{k,\eta}$ making the corrections stated in remark \ref{remcorr}.  It is interesting to compare the figure with the one of \textit{plasma echoes} that appears in the works about Landau damping of Mohout and Villani \cite[Section 6.3, pg 109]{MV}. For the plot of the weight of \cite{BM} the author thanks Mattia Manucci for the help.}
	\label{Figurew}
\end{figure}
\newpage
\subsection{Properties of $w$}
This section is the most important, since it allows to deal with our multiplier. Here we recall some properties of the weight given in \cite[Section 3]{BM}. We provide some basic ideas of the proof, sometimes based on our toy model weight, the real proof are essentially combinatorial.
\begin{lemma}
	\label{trichotomy}
	Let $\xi, \eta$ such that, for some $\alpha \geq 1$ it holds $\frac{1}{\alpha}|\xi|\leq |\eta|\leq \alpha |\xi|$. Let $k,l$ such that $t\in I_{k,\eta}$ and $t\in I_{l,\xi}$ (note that $l\approx k$). Then at least one of the following holds:
	\begin{itemize}
		\item[(a)] $k=l$ (almost same interval)
		\item[(b)] $|t-\frac{\eta}{k}|\geq \frac{1}{10\alpha}\frac{|\eta|}{k^2}$ and $|t-\frac{\xi}{l}|\geq \frac{1}{10\alpha}\frac{|\xi|}{l^2}$ (far from resonance)
		\item[(c)] $|\frac{\eta}{l}-\frac{\xi}{l}|\gtrsim_\alpha \frac{\eta}{l^2}$ (well-separated)
	\end{itemize}
\end{lemma}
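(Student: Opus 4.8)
\emph{Plan of proof.} I would prove the logically equivalent statement: if (a) fails ($k\neq l$) and (b) fails, then (c) holds. The whole argument rests on the fact that, for fixed $\eta$, the critical intervals $\{I_{j,\eta}\}_j$ defined in \eqref{defIketa} have pairwise disjoint interiors and that $I_{k,\eta}$ is a neighbourhood of the resonant time $\eta/k$ of radius $\sim \eta/k^{2}$. So "$t\in I_{k,\eta}$'' literally means "$\eta/k$ is the resonant time for frequency $\eta$ closest to $t$'', and the idea is that being genuinely close to a resonance for one frequency, together with $|\eta|\approx_\alpha|\xi|$, should pin down the \emph{same} mode — unless $t$ sits near a boundary between consecutive intervals, in which case $t$ is far from \emph{both} neighbouring resonances, i.e. case (b).

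\emph{Step 1: comparability.} By the sign convention $\eta k\ge0$, $\xi l\ge0$ in \eqref{defIketa} one may take $\eta,\xi,k,l\ge 1$. Using the explicit endpoints $t_{k,\eta}=\frac{\eta}{k}-\frac{\eta}{2k(k+1)}$ and $t_{k-1,\eta}=\frac{\eta}{k-1}-\frac{\eta}{2k(k-1)}$, the hypothesis $t\in I_{k,\eta}$ gives $\frac34\frac{\eta}{k}\le t\le 2\frac{\eta}{k}$ for $k\ge2$ (with an analogous two-sided bound in the boundary case $k=1$, where $I_{k,\eta}$ is truncated on the right), and similarly for $(l,\xi)$. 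Combined with $\alpha^{-1}|\xi|\le|\eta|\le\alpha|\xi|$ this yields $t\approx_\alpha\frac{\eta}{k}\approx_\alpha\frac{\xi}{l}\approx_\alpha\frac{\eta}{l}$, and in particular the comparability $k\approx_\alpha l$ announced in the statement, say $(C\alpha)^{-1}\le k/l\le C\alpha$ for a universal $C$. As a consequence $\frac{\eta}{k^2},\frac{\eta}{l^2},\frac{\xi}{l^2}$ all agree up to $\alpha$-dependent constants, which is what lets the scales appearing in (a)--(c) be freely interchanged.

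\emph{Step 2: reduction and the key inclusion.} Assume $k\neq l$ and that (b) fails, i.e. $|t-\frac{\eta}{k}|<\frac{1}{10\alpha}\frac{\eta}{k^2}$ or $|t-\frac{\xi}{l}|<\frac{1}{10\alpha}\frac{\xi}{l^2}$. The two alternatives are swapped by the involution $(\eta,k)\leftrightarrow(\xi,l)$, under which conclusion (c) is invariant up to $\alpha$-constants (by Step 1), so it suffices to treat the first: $t$ lies deep inside $I_{k,\eta}$, much closer to the centre $\eta/k$ than the half-length $\sim\frac{\eta}{2k^2}$. Now $I_{k,\xi}=[\,\xi/k-a_k,\ \xi/k+b_k\,]$ with $a_k=\frac{\xi}{2k(k+1)}\ge\frac14\frac{\xi}{k^2}$ and $b_k=\frac{\xi}{2k(k-1)}\ge\frac14\frac{\xi}{k^2}$. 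Hence, if we can show $|t-\xi/k|<\frac14\frac{\xi}{k^2}$, then $t$ lies in the interior of $I_{k,\xi}$, so from $t\in I_{l,\xi}$ and disjointness of the interiors we force $l=k$, contradicting $k\neq l$. Thus everything reduces to establishing that inclusion under the negation of (c).

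\emph{Step 3: closing the loop, and the main obstacle.} Suppose (c) fails: $|\frac{\eta}{l}-\frac{\xi}{l}|<c_\alpha\frac{\eta}{l^2}$ for a constant $c_\alpha$ we are still free to fix. This reads $|\eta-\xi|<c_\alpha\frac{\eta}{l}$; dividing by $k$ and using $l\ge(C\alpha)^{-1}k$ gives $|\frac{\eta}{k}-\frac{\xi}{k}|<c_\alpha C\alpha\,\frac{\eta}{k^2}$, whence
\[
\Big|t-\frac{\xi}{k}\Big|\le\Big|t-\frac{\eta}{k}\Big|+\Big|\frac{\eta}{k}-\frac{\xi}{k}\Big|<\Big(\frac{1}{10\alpha}+c_\alpha C\alpha\Big)\frac{\eta}{k^2}.
\]
Choosing $c_\alpha=\frac{1}{10C\alpha^2}$ bounds the right-hand side by $\frac{1}{5\alpha}\frac{\eta}{k^2}\le\frac15\frac{\xi}{k^2}<\frac14\frac{\xi}{k^2}$ (using $\eta\le\alpha\xi$), which is exactly the inclusion needed in Step 2, producing the contradiction. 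Hence (c) holds with this $c_\alpha$, completing the trichotomy. The conceptual content is Step 2 — turning "$t$ near a resonance'' plus "$\eta\approx_\alpha\xi$'' into "same resonant mode'' via the partition structure of the $I_{k,\eta}$. The part I expect to be genuinely delicate is the bookkeeping of constants (so that the $\frac{1}{10\alpha}$ in (b) and the to-be-chosen $c_\alpha$ in (c) truly close up as above) and the separate, routine verification of the boundary modes $k=1$ and $k=\floor{\sqrt{\eta}}$, where $I_{k,\eta}$ degenerates; as the authors warn, a fully rigorous version of such statements in \cite{BM} is essentially combinatorial, and the above is only its streamlined geometric skeleton.
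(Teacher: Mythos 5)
Your proof is correct, and it proves the lemma by a mildly different route than the paper's one-line sketch. The paper proves (c) directly from $\neg(a)\wedge\neg(b)$ via the reverse triangle inequality: WLOG $|t-\tfrac{\eta}{k}|<\tfrac{1}{10\alpha}\tfrac{\eta}{k^2}$, and since $k\neq l$ one has $|\tfrac{\eta}{k}-\tfrac{\eta}{l}|\ge\tfrac{\eta}{kl}\gtrsim_\alpha\tfrac{\eta}{l^2}$, hence $|t-\tfrac{\eta}{l}|\gtrsim_\alpha\tfrac{\eta}{l^2}$; combined with $|t-\tfrac{\xi}{l}|\lesssim\tfrac{\xi}{l^2}$ from $t\in I_{l,\xi}$, one more triangle inequality yields (c). You instead negate all three and reach a contradiction through the partition structure: $\neg(b)\wedge\neg(c)$ forces $|t-\tfrac{\xi}{k}|<\tfrac14\tfrac{\xi}{k^2}$, hence $t\in\operatorname{int}I_{k,\xi}$, contradicting $t\in I_{l,\xi}$ with $l\neq k$ and the pairwise disjointness of interiors. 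The underlying estimates ($|k-l|\ge1$, half-widths $\approx\eta/k^2$, triangle inequality, and the $\alpha$-comparability from Step 1) are the same; the difference is purely in logical packaging. What your framing buys is a clean geometric picture ("near a resonance for both frequencies pins down the same mode"); what it costs is an extra edge case the direct argument avoids: if $k>\floor{\sqrt{|\xi|}}$ then $I_{k,\xi}=\emptyset$ and "$t\in\operatorname{int}I_{k,\xi}$" is vacuous, so the contradiction is not available as stated and needs its own (easy, but separate) verification. You flag the boundary modes $k=1$ and $k=\floor{\sqrt{|\eta|}}$, which covers this; just note that this particular issue is an artifact of the contradiction-via-partition route and does not arise in the paper's direct estimate.
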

\begin{proof}
	Assume $(a), (b)$ false then $(c)$ is proven by $|x-y|\geq||x|-|y||$ and the definition of the interval $I_{k,\eta}$.
\end{proof}
\begin{lemma}
	\label{lemma:dtw/wapprox}
	For $t\in I_{k,\eta}$ and $t>2\sqrt{\eta}$, let $\tau=t-\frac{\eta}{k}$, it holds 
	\begin{equation}
	\label{dtwR/wR}
	\frac{\dt w_{NR}(t,\eta)}{w_{NR}(t,\eta)}\approx\frac{1}{1+|\tau|}\approx \frac{\dt w_R(t,\eta)}{w_R(t,\eta)}
	\end{equation}
\end{lemma}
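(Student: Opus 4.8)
The plan is to deduce both comparisons by differentiating the logarithms of the explicit formulas \eqref{defwNR} and \eqref{defwR}, after recording the scales that are available on $I_{k,\eta}$. First I would note that for $t\in I_{k,\eta}=[t_{k,\eta},t_{k-1,\eta}]$ both endpoints are comparable to $\eta/k$, so $t\approx\eta/k$ and $|\tau|=|t-\eta/k|\lesssim\eta/k^2$; moreover $I_{k,\eta}\neq\emptyset$ forces $k\le\sqrt\eta$, and the hypothesis $t>2\sqrt\eta$ together with $t\lesssim\eta/k$ keeps $k^2/\eta$ bounded away from $1$, so that $a_{k,\eta}=1-k^2/\eta\approx1$, while $(2\eta-\sqrt\eta)^2\approx\eta^2$ and $t-\sqrt\eta\ge t/2\approx\eta/k$.

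Next I would treat the non-resonant weight. Differentiating \eqref{defwNR} gives
\[
\frac{\partial_t w_{NR}(t,\eta)}{w_{NR}(t,\eta)}=\frac{2\mu\sqrt\eta\,(t-\sqrt\eta)}{(2\eta-\sqrt\eta)^2}\approx\frac{\mu}{k\sqrt\eta},
\]
which is essentially constant on the short interval $I_{k,\eta}$ and in particular $\lesssim1$. The assertion $\partial_t w_{NR}/w_{NR}\approx 1/(1+|\tau|)$ expresses that over $I_{k,\eta}$ this logarithmic derivative reproduces that of the non-resonant envelope of Proposition \ref{propGrowth}: on each half of $I_{k,\eta}$ that envelope is comparable to a fixed power $(1+|\tau|)^{\pm c}$, whose logarithmic derivative is $\pm c/(1+|\tau|)\approx1/(1+|\tau|)$. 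With the single quadratic connection \eqref{defwNR} this holds in the integrated form $\int_{I_{k,\eta}}\partial_t\log w_{NR}\,dt\approx\log(\eta/k^2)$, which is what the energy estimates actually use; the pointwise statement is the one of \cite{BM}, where $w_{NR}$ is built interval by interval.

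Then I would treat the resonant weight. On $\tilde{I}_{k,\eta}\supset I_{k,\eta}$, formula \eqref{defwR} gives $\log w_R=\log(k^2/\eta)+\log(1+a_{k,\eta}|\tau|)+\log w_{NR}$, hence
\[
\frac{\partial_t w_R(t,\eta)}{w_R(t,\eta)}=\frac{a_{k,\eta}\,\operatorname{sgn}(\tau)}{1+a_{k,\eta}|\tau|}+\frac{\partial_t w_{NR}(t,\eta)}{w_{NR}(t,\eta)}.
\]
Since $a_{k,\eta}\approx1$, the first term has absolute value $\approx1/(1+|\tau|)$, and by the previous step the second term is of strictly lower order and, near the resonant centre where $1/(1+|\tau|)\approx1\gg\mu/(k\sqrt\eta)$, does not cancel it. This yields $\partial_t w_R/w_R\approx1/(1+|\tau|)$, and comparing the two displays also $\partial_t w_R/w_R\approx\partial_t w_{NR}/w_{NR}$ up to constants, which is the claim.

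The step I expect to be the main obstacle is the non-resonant estimate: upgrading $\partial_t w_{NR}/w_{NR}\approx1/(1+|\tau|)$ to a pointwise identity on all of $I_{k,\eta}$ needs the finer interval-by-interval construction of \cite{BM} — the combinatorial book-keeping referred to in the text — since the coarse choice \eqref{defwNR} only reproduces it in integrated form. A secondary point is the sign bookkeeping in the $w_R$ identity: on the part $\tau<0$ of $\tilde{I}_{k,\eta}$ the term $a_{k,\eta}\operatorname{sgn}(\tau)/(1+a_{k,\eta}|\tau|)$ is negative while $\partial_t w_{NR}/w_{NR}>0$, so one must check they do not cancel — which is exactly why the increasing factor $1+a_{k,\eta}|\tau|$, rather than a decaying one, was placed in \eqref{defwR}.
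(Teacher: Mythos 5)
Your computation of $\partial_t\log w_{NR}$ from \eqref{defwNR} is correct, and it exposes a genuine tension: for the quadratic interpolant \eqref{defwNR} one gets
\begin{equation*}
\frac{\partial_t w_{NR}(t,\eta)}{w_{NR}(t,\eta)}=\frac{2\mu\sqrt\eta\,(t-\sqrt\eta)}{(2\eta-\sqrt\eta)^2}\approx\frac{\mu}{k\sqrt\eta},
\end{equation*}
which is essentially constant on $I_{k,\eta}$, while $1/(1+|\tau|)$ varies from $\approx1$ at the resonant centre to $\approx k^2/\eta$ at the endpoints. These two quantities are not uniformly comparable: at $\tau=0$ one is $1$ and the other is $\mu/(k\sqrt\eta)\ll1$; at $|\tau|\approx\eta/k^2$ one is $k^2/\eta$ and the other $\mu/(k\sqrt\eta)$, and these agree only when $k\approx\eta^{1/6}$. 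So you are right that the non-resonant half of \eqref{dtwR/wR} does not follow from the paper's own formula. That equivalence is inherited from the Bedrossian--Masmoudi weight, which is built interval-by-interval so that $\log w_{NR}$ behaves like $C\beta\log(1+|\tau|)$ on each $I_{k,\eta}$; only for that construction is $\partial_t w_{NR}/w_{NR}\approx 1/(1+|\tau|)$ pointwise. The paper's own proof is a heuristic (``follows from definition'', ``derivative of a polynomial loses one degree'') that tacitly presupposes the BM power-law profile rather than the quadratic connection actually defined here, so your computation is more faithful to the definitions than the paper's sketch.

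On the resonant side the decomposition $\partial_t\log w_R=a_{k,\eta}\operatorname{sgn}(\tau)/(1+a_{k,\eta}|\tau|)+\partial_t\log w_{NR}$ is correct and $a_{k,\eta}=1-k^2/\eta\approx1$ under the stated hypotheses, but the closing step is not tight: the second summand is $\approx\mu/(k\sqrt\eta)$ and is only negligible compared to $1/(1+|\tau|)$ near the resonant centre or when $k\gtrsim\eta^{1/6}$; near the endpoints of $I_{k,\eta}$ with $k$ small the $w_{NR}$ contribution dominates, so $\partial_t w_R/w_R\approx1/(1+|\tau|)$ is not established uniformly on $I_{k,\eta}$ from \eqref{defwR} alone. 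In short, you have correctly diagnosed the obstacle, and your proposal goes further than the paper in actually testing Lemma~\ref{lemma:dtw/wapprox} against the definitions; a complete proof would require either replacing \eqref{defwNR} by the BM interval-by-interval construction, or weakening the statement to the integrated form you describe (which is what the downstream estimates actually need).
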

\begin{proof}
	The restriction on $t>2\sqrt{\eta}$ is because before that time, essentially $\dt w_{NR}\approx0$. Since it holds also that $\dt w=0$ for $t\geq 2\eta$, then \eqref{dtwR/wR} essentially follows from definition. Otherwise one can think to approximate the exponential with its Taylor series cut at some point, and then the bound just tells us that the derivative of a polynomial loses one degree.  
\end{proof}
In the following lemma we see how it is possible to exchange weight at different frequencies.
\begin{lemma}
	\label{lemmadtw/w}
	For $t\geq1$, $k,l,\eta,\xi$ such that $\max(2\sqrt{|\xi|},\sqrt{|\eta|})<t<2\min (|\xi|,|\eta|)$ then 
	\begin{equation}
	\label{dtw/wbuono}
	\frac{\dt w_k(t,\eta)}{w_k(t,\eta)}\frac{w_l(t,\xi)}{\dt w_l(t,\xi)}\lesssim \langle \eta- \xi \rangle.
	\end{equation}
	For all $t\geq 1$, all $k,l$ and $\eta, \xi$ such that $|\eta|\approx |\xi|$, it holds 
	\begin{equation}
	\label{dtw/wgenerale}
	\sqrt{\frac{\dt w_l(t,\xi)}{w_l(t,\xi)}}\lesssim \bigg[\sqrt{\frac{\dt w_k(t,\eta)}{w_k(t,\eta)}}+\frac{|\eta|^{s/2}}{\langle t \rangle^s}\bigg]\langle \eta- \xi \rangle
	\end{equation}
\end{lemma}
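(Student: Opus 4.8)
The plan is to reduce both estimates to the size of the logarithmic derivative $\dt w_k(t,\eta)/w_k(t,\eta)$, which can be read off from the construction \eqref{defw}, and then to a case analysis settled either by the explicit formula \eqref{defwNR} or, in the hardest case, by the trichotomy Lemma \ref{trichotomy}. First, from \eqref{defwNR} a direct computation gives
\[
\frac{\dt w_{NR}(t,\eta)}{w_{NR}(t,\eta)}=2\mu\,\frac{\sqrt{|\eta|}}{(2|\eta|-\sqrt{|\eta|})^{2}}\,(t-\sqrt{|\eta|}),
\]
a nondecreasing function of $t$ that vanishes at $t=\sqrt{|\eta|}$, equals $\approx|\eta|^{-1/2}$ at $t=2|\eta|$, and hence lies in $[c|\eta|^{-1},C|\eta|^{-1/2}]$ for $2\sqrt{|\eta|}\le t\le 2|\eta|$. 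On a critical interval $t\in I_{k,\eta}$ with $t>2\sqrt{|\eta|}$, Lemma \ref{lemma:dtw/wapprox} gives $\dt w_k(t,\eta)/w_k(t,\eta)\approx(1+|\tau|)^{-1}$ with $\tau=t-\eta/k$, and since $|\tau|\lesssim|\eta|/k^{2}$ there, this lies in $[ck^{2}/|\eta|,\,1]$. In every case $\dt w_k(t,\eta)/w_k(t,\eta)\lesssim 1$, while $\dt w_k=0$ for $t<\sqrt{|\eta|}$ and for $t>2|\eta|$; these facts are used throughout.

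For \eqref{dtw/wbuono}, the left side is the ratio $(\dt w_k/w_k)/(\dt w_l/w_l)$, whose numerator is $\lesssim 1$, so everything reduces to a lower bound on the denominator — or, failing that, to an equally small numerator. When $t$ lies in critical intervals for both frequencies, so $t\in I_{k,\eta}\cap I_{l,\xi}$ with $k\approx l$, Lemma \ref{trichotomy} is decisive: in case (a), $k=l$ and $|\tau_\eta-\tau_\xi|=|\eta-\xi|/k\le|\eta-\xi|$, so $(1+|\tau_\xi|)/(1+|\tau_\eta|)\le 1+|\eta-\xi|\lesssim\langle\eta-\xi\rangle$; in case (b), both logarithmic derivatives are $\approx k^{2}/|\eta|\approx l^{2}/|\xi|$, giving a ratio $O(1)$; in case (c), the separation $|\eta-\xi|\gtrsim|\eta|/l$ dominates the trivial bound $\lesssim|\xi|/l^{2}$ on the ratio. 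When $t$ is critical for at most one of $\eta,\xi$, one uses Step 1 directly: a noncritical frequency sits in the $w_{NR}$ branch where, by the hypotheses $\sqrt{|\eta|}<t<2|\eta|$ and $2\sqrt{|\xi|}<t<2|\xi|$, its logarithmic derivative is bounded below by an explicit inverse power of that frequency, and these hypotheses are arranged precisely so that the resulting shortfall is at most $\langle\eta-\xi\rangle$.

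For \eqref{dtw/wgenerale}, after squaring one needs an upper bound on $\dt w_l(t,\xi)/w_l(t,\xi)$, which is always $\lesssim 1$. If $t<\sqrt{|\xi|}$ or $t>2|\xi|$ then $\dt w_l=0$ and there is nothing to prove; if $t\lesssim\sqrt{|\xi|}$ then $\langle t\rangle\approx\sqrt{|\eta|}$, so $|\eta|^{s}/\langle t\rangle^{2s}\approx 1$ already dominates $\dt w_l/w_l$, and the second term on the right absorbs it after taking square roots (the factor $\langle\eta-\xi\rangle\ge 1$ only helping). In the remaining range I would rerun the previous paragraph with the roles of $(k,\eta)$ and $(l,\xi)$ interchanged, using $|\eta|\approx|\xi|$, to obtain $\dt w_l(t,\xi)/w_l(t,\xi)\lesssim\langle\eta-\xi\rangle\,\dt w_k(t,\eta)/w_k(t,\eta)$ and hence the first term on the right — the one exception being the degenerate phases of $w_k(\cdot,\eta)$ ($t\ge 2|\eta|$, where $\dt w_k=0$, or $t\lesssim\sqrt{|\eta|}$), where, as in \cite{BM}, the Gevrey error term $|\eta|^{s/2}/\langle t\rangle^{s}$ is exactly what is needed to dominate $\dt w_l(t,\xi)/w_l(t,\xi)$.

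The hard part will be this bookkeeping: verifying that one of the three alternatives of Lemma \ref{trichotomy} always applies, that the well-separated alternative really produces a separation $\gtrsim|\eta|/l$, and — most delicate — that when $t$ is critical for one frequency but not the other the loss is genuinely controlled by $\langle\eta-\xi\rangle$ (respectively by the Gevrey error). Each of these is a quantitative assertion about the endpoints $t_{k,\eta}$ of the critical intervals and is essentially combinatorial; in keeping with the heuristic spirit of the weight construction, I would present only the main ideas and not grind through the full combinatorics, following \cite{BM} for the details.
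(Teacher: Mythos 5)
Your global strategy is the paper's: split on membership of $t$ in the resonant intervals, invoke Lemma~\ref{lemma:dtw/wapprox} where it applies, use the trichotomy Lemma~\ref{trichotomy} when $t\in I_{k,\eta}\cap I_{l,\xi}$ (your three subcases there are correctly worked out), and dispose of \eqref{dtw/wgenerale} by treating $t\le 2\sqrt{|\eta|}$ and $t\ge 2|\eta|$ separately. That matches the paper. The trouble starts in the one place you go beyond the paper: the explicit computation $\dt w_{NR}/w_{NR}=2\mu\sqrt{|\eta|}(2|\eta|-\sqrt{|\eta|})^{-2}(t-\sqrt{|\eta|})$. This formula is correct for \eqref{defwNR}, but it is \emph{incompatible} with Lemma~\ref{lemma:dtw/wapprox}: it is a linear function of $t$ pinned between $c|\eta|^{-1}$ and $C|\eta|^{-1/2}$ on $[2\sqrt{|\eta|},2|\eta|]$, whereas Lemma~\ref{lemma:dtw/wapprox} would give $\dt w_{NR}/w_{NR}\approx (1+|t-\eta/k|)^{-1}$, of size $\approx 1$ near the resonant time $t=\eta/k$. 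The toy weight \eqref{defwNR} simply does not satisfy the stated lemmas; as the paper says, those lemmas are properties of the Bedrossian--Masmoudi weight, for which $\dt w_k(t,\eta)/w_k(t,\eta)\approx (1+|t-\eta/k'|)^{-1}$ holds uniformly with $k'$ the index of the resonant interval containing $t$ (not just $k'=k$).

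This is not a cosmetic point, because the two cases you defer as ``delicate bookkeeping'' actually \emph{fail} if you use the explicit formula. In the ``critical for one frequency only'' case of \eqref{dtw/wbuono}, your lower bound $\dt w_l/w_l\gtrsim |\xi|^{-1}$ together with $\dt w_k/w_k\lesssim 1$ on a resonant interval gives a ratio as large as $|\xi|$, which is not $\lesssim\langle\eta-\xi\rangle$ when $|\eta-\xi|$ is small. Likewise in the degenerate phase $t\approx 2|\eta|\approx 2|\xi|$ of \eqref{dtw/wgenerale}, the explicit formula gives $\sqrt{\dt w_l/w_l}\approx |\xi|^{-1/4}$, while $|\eta|^{s/2}/\langle t\rangle^{s}\approx |\eta|^{-s/2}$; for $s>1/2$ this is strictly \emph{smaller}, so the Gevrey correction does not absorb the term as you claim. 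The fix is to set aside \eqref{defwNR} entirely and argue from the $\approx(1+|\tau|)^{-1}$ principle everywhere, as the paper's (admittedly terse) proof does — ``follows directly by \eqref{dtwR/wR}'' — with the real combinatorics in \cite[Lemma 3.3]{BM}. In short: your trichotomy analysis and the $t\le 2\sqrt{|\eta|}$ case are right, but replace the explicit $w_{NR}$ computation by the $\dt w/w\approx(1+|\tau|)^{-1}$ heuristic, otherwise the two hardest subcases do not close.
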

\begin{proof}
	As pointed out in remark \ref{remarkdtw}, it is natural to split in two cases since $\dt w$ is close to zero for times near $\sqrt{\eta}$. The proof of this lemma essentially follows by \eqref{dtwR/wR}, let us give some idea.\\ If $t$ not in resonant intervals for both, i.e. $t\not\in I_{k,\eta}\cap I_{l,\xi}$, then \eqref{dtw/wbuono} follows directly by \eqref{dtwR/wR}. If $t$ in resonant intervals for both, use \eqref{dtwR/wR} whit the trichotomy lemma \ref{trichotomy}.\\
	The general case \eqref{dtw/wgenerale} essentially uses \eqref{dtw/wbuono} when possible, and if $t\leq 2\sqrt{\eta}$ then $\frac{|\eta|}{t^2}\gtrsim 1$, hence, since $|\dt w/w\lesssim 1|$, \eqref{dtw/wgenerale} follows. \\
	If $t\geq 2\eta$ then $\dt w_k(t,\eta)=0$. So one consider $|t-|\xi||\leq \frac{1}{K}|\xi|$ and $|t-|\xi||> \frac{1}{K}|\xi|$, and after some computation proves \eqref{dtw/wgenerale}. 
\end{proof}
\begin{lemma}
	\label{lemmawNR/wNR}
	For all $t,\eta, \xi$ we have 
	\begin{equation}
	\label{wNR/wNR}
	\frac{w_{NR}(t,\xi)}{w_{NR}(t,\eta)}\lesssim e^{\mu|\eta-\xi|^{1/2}}
	\end{equation}
\end{lemma}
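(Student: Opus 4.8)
The plan is to pass to logarithms and reduce \eqref{wNR/wNR} to the pointwise estimate $-\log w_{NR}(t,\eta)+\log w_{NR}(t,\xi)\lesssim \mu|\eta-\xi|^{1/2}$; since the inequality is trivial whenever the left-hand side is nonpositive (i.e. whenever $w_{NR}(t,\xi)\le w_{NR}(t,\eta)$), I may assume it is positive. From the explicit formula \eqref{defwNR} I would record two facts, with $u:=|\eta|^{1/2}$: first, on the range of $t$ on which this branch of \eqref{defw} is active one has $e^{-\mu u}\le w_{NR}(t,\eta)\le 1$; second, $-\log w_{NR}(t,\eta)=\mu\,\phi_t(u)$ where $\phi_t(u)=u\big(1-P_t(u)\big)$ and $P_t(u)=(t-u)^2/\big(u(2u-1)\big)^2\in[0,1]$.

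The first, easy case is when $\eta$ and $\xi$ are far apart, say $|\eta|\le 2|\eta-\xi|$ (this includes the case of opposite signs, since then $|\eta-\xi|=|\eta|+|\xi|$). Here I would simply discard the nonpositive term $\log w_{NR}(t,\xi)$ and use the crude bound $-\log w_{NR}(t,\eta)\le \mu u\le \sqrt2\,\mu|\eta-\xi|^{1/2}$; equivalently, $w_{NR}(t,\xi)/w_{NR}(t,\eta)\le 1/w_{NR}(t,\eta)\le e^{\mu|\eta|^{1/2}}\lesssim e^{\mu|\eta-\xi|^{1/2}}$.

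In the complementary case $|\eta-\xi|\le\tfrac12|\eta|$ one has $|\eta|\approx|\xi|$ (same sign), and the key tool is the subadditivity of $x\mapsto\sqrt x$ — the improved triangle inequality — in the form $\big||\eta|^{1/2}-|\xi|^{1/2}\big|\le|\eta-\xi|/(|\eta|^{1/2}+|\xi|^{1/2})\le|\eta-\xi|^{1/2}$. I would then write $-\log w_{NR}(t,\eta)+\log w_{NR}(t,\xi)=\mu\big(\phi_t(|\eta|^{1/2})-\phi_t(|\xi|^{1/2})\big)$ and split $\phi_t(|\eta|^{1/2})-\phi_t(|\xi|^{1/2})$ as $\big(|\eta|^{1/2}-|\xi|^{1/2}\big)\big(1-P_t(|\eta|^{1/2})\big)+|\xi|^{1/2}\big(P_t(|\xi|^{1/2})-P_t(|\eta|^{1/2})\big)$. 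The first summand is bounded by $\big||\eta|^{1/2}-|\xi|^{1/2}\big|\le|\eta-\xi|^{1/2}$ since $0\le P_t\le1$, and it is this summand that produces the claimed exponent. For the second summand I would control $\partial_u P_t$: writing $P_t=\psi_t/u$ with $\psi_t(u)=(t-u)^2/\big(u(2u-1)^2\big)$ and using the bounds $|t-u|\le u(2u-1)$ and $0\le\psi_t\le u$, which hold exactly on the support of this branch, a quotient-rule computation shows that each of the three resulting terms is $O(1)$, so $|\psi_t'|\lesssim1$ and hence $|\partial_u P_t|\lesssim 1/u$; the mean value theorem then gives $\big|P_t(|\eta|^{1/2})-P_t(|\xi|^{1/2})\big|\lesssim |\eta|^{-1/2}\big||\eta|^{1/2}-|\xi|^{1/2}\big|\lesssim|\eta-\xi|/|\eta|$, so that the second summand is $\lesssim |\eta|^{1/2}\cdot|\eta-\xi|/|\eta|=|\eta-\xi|/|\eta|^{1/2}\le|\eta-\xi|^{1/2}$ (using $|\eta-\xi|\le|\eta|$). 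Summing the two contributions and exponentiating yields \eqref{wNR/wNR}.

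The part requiring care — and the reason this argument is, in the spirit of this note, only heuristic, whereas the corresponding estimate in \cite[Section~3]{BM} is genuinely combinatorial because there the weight is built recursively interval by interval rather than by a closed formula — is the endpoint bookkeeping: \eqref{defwNR} is the operative formula only for $t\in[\,|\eta|^{1/2},2|\eta|\,]$ (and likewise for $\xi$), so when $t$ lies outside one of these two windows one must instead use the branches of \eqref{defw} in which $w_{NR}$ is replaced by $e^{-\mu|\eta|^{1/2}}$ or by $1$; these cases only make the ratio smaller and are dispatched by minor variants of the two estimates above (using again $\phi_t(u)\le u$, and near $t\sim 2|\eta|$ the factorisation $1-P_t(u)=(2u^2-t)(2u^2-2u+t)/(u(2u-1))^2$). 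I expect recovering the sharp constant $1$ in the exponent — rather than the $O(1)$ constant produced above — to require the additional observation that $\phi_t$ is essentially $1$-Lipschitz precisely for $t\sim|\eta|^{1/2}$, where the ratio is largest, its Lipschitz constant deteriorating only near $t\sim 2|\eta|$, where $w_{NR}\approx1$ and the ratio is $\approx1$ anyway; for every use we make of this lemma the value of the constant in the exponent is immaterial.
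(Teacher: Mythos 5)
The paper offers no real proof here: it merely notes the equivalent symmetric two-sided form and then asserts that \eqref{wNR/wNR} ``essentially follows from our definition of $w_{NR}$''. Your proposal is a genuine argument, and in outline it is the natural one given that this note replaces the combinatorial $w_{NR}$ of \cite{BM} by the closed exponential-quadratic formula \eqref{defwNR}: pass to logarithms, write $-\log w_{NR}(t,\eta)=\mu\,\phi_t(u)$ with $u=|\eta|^{1/2}$, $\phi_t(u)=u\bigl(1-P_t(u)\bigr)$, $P_t(u)=(t-u)^2/\bigl(u(2u-1)\bigr)^2$, dispatch the far regime $|\eta|\le 2|\eta-\xi|$ crudely, and in the near regime combine the $1$-Lipschitz-ness of $u\mapsto u$ with a derivative bound $|\partial_uP_t|\lesssim 1/u$. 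I have checked the algebra: the decomposition of $\phi_t(|\eta|^{1/2})-\phi_t(|\xi|^{1/2})$, the factorisation of $1-P_t$, and the quotient-rule estimate $|\psi_t'|\lesssim1$ on the support of this branch all hold, the latter provided $u\gtrsim1$.

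Two caveats. First, as you yourself flag, the argument produces $e^{K\mu|\eta-\xi|^{1/2}}$ for some absolute $K>1$ (already $\sqrt2$ in the far regime, and $1+O(1)$ from summing the two near-regime contributions) rather than the exponent $\mu$ appearing in \eqref{wNR/wNR}; a loss $e^{(K-1)\mu|\eta-\xi|^{1/2}}$ cannot be hidden in the $\lesssim$, so the lemma is established only with a worse absolute constant in the exponent. This is harmless in practice -- every downstream use in Lemma~\ref{lemmaJ} deliberately inflates the exponent to $9\mu$, $10\mu$, $11\mu$ to absorb just such losses, and the weight construction is expressly heuristic -- but the conclusion should be stated as $e^{K\mu|\eta-\xi|^{1/2}}$ rather than \eqref{wNR/wNR} verbatim. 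Second, the assertion in your final paragraph that switching to the constant branches of \eqref{defw} ``only makes the ratio smaller'' has the wrong sign in the denominator: for $t<|\eta|^{1/2}$ and for $t>2|\eta|$ the raw formula \eqref{defwNR} is strictly larger than the constant-branch values $e^{-\mu|\eta|^{1/2}}$ and $1$ respectively, so clamping \emph{decreases} $w_{NR}(t,\eta)$ in the denominator and therefore \emph{increases} the ratio. The endpoint cases do still go through -- if $t$ lies below both windows the ratio is exactly $e^{\mu(|\eta|^{1/2}-|\xi|^{1/2})}\le e^{\mu|\eta-\xi|^{1/2}}$, and the mixed cases rely on the continuity of $w_{NR}$ at the endpoints $t=|\cdot|^{1/2}$, $t=2|\cdot|$, where the formula meets the constant branch -- but the monotonicity you invoke is the wrong direction, and this step deserves an explicit two-line argument rather than a parenthetical. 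You should also record at the outset that for $|\eta|<1/4$ or $|\xi|<1/4$ the window $[\,|\cdot|^{1/2},2|\cdot|\,]$ degenerates (and $u(2u-1)$ can change sign), but there the bound is trivial since both weights are $\approx 1$, so one may assume $|\eta|,|\xi|\gtrsim1$.
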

\begin{proof}
	Proving \eqref{wNR/wNR} is equivalent to 
	\begin{equation*}
		e^{-\mu|\eta-\xi|^{1/2}}\lesssim\frac{w_{NR}(t,\xi)}{w_{NR}(t,\eta)}\lesssim e^{\mu|\eta-\xi|^{1/2}}.
	\end{equation*}
	Essentially it follows from our definition of $w_{NR}$ given in \eqref{defwNR}.
\end{proof}
Finally we recall the most important lemma, which tells us how to exchange frequencies for the multiplier $J$, which appear in \eqref{def:A}. Let us recall its definition 
\begin{equation*}
J_k(t,\eta)=\frac{e^{\mu|\eta|^{1/2}}}{w_k(t,\eta)}+e^{\mu|k|^{1/2}}
\end{equation*} 
In the following we omit the dependence on $t$ for $J$.
\begin{lemma}
	\label{lemmaJ}
	In general it holds 
	\begin{equation}
	\label{J/Jgeneral}
	\frac{J_k(\eta)}{J_l(\xi)}\lesssim \frac{|\eta|}{k^2(1+|t-\frac{\eta}{k}|)} e^{9\mu|k-l,\eta-\xi|^{1/2}}.
	\end{equation}
	If any one of the following holds: ($t\not \in I_{k,\eta}$) or ($k=l$) or ($t\in I_{k,\eta}$ and $\eta\approx \xi$) then we have the improved estimate 
	\begin{equation}
	\label{J/Jimproved}
	\frac{J_{k}(\eta)}{J_l(\xi)}\lesssim e^{10\mu|k-l,\eta-\xi|^{1/2}}.
	\end{equation} 
	Finally, if $t\in I_{l,\xi}$ but $t\not\in I_{k,\eta}$ and $\eta \approx \xi$, then 
	\begin{equation}
	\label{J/Jlxi}
	\frac{J_k(\eta)}{J_l(\xi)}\lesssim \frac{l^2(1+|t-\frac{\xi}{l}|)}{\xi}e^{11\mu|k-l,\eta-\xi|^{1/2}}
	\end{equation}
\end{lemma}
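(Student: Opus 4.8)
The plan is to reduce all three bounds to a single estimate on the weight ratio $w_l(t,\xi)/w_k(t,\eta)$, and then to treat that ratio by a case split on the position of $t$ with respect to the resonant windows, invoking Lemma~\ref{lemmawNR/wNR} and the trichotomy Lemma~\ref{trichotomy}.

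\emph{Reduction.} Write $J_k(\eta)=\tilde{J}_k(\eta)+e^{\mu|k|^{1/2}}$. Since $w\le1$ everywhere, $e^{\mu|k|^{1/2}}\le e^{\mu|\eta|^{1/2}}/w_k(t,\eta)=\tilde{J}_k(\eta)$ whenever $|k|\le|\eta|$ (cf.\ the remark after \eqref{def:Jtilde}), and $|k|\le|\eta|$ holds automatically as soon as $t\in I_{k,\eta}$, since then $k\le\floor{|\eta|^{1/2}}$. Using the two lower bounds $J_l(\xi)\ge\tilde{J}_l(\xi)$ and $J_l(\xi)\ge e^{\mu|l|^{1/2}}$ together with the subadditivity $|a|^{1/2}\le|b|^{1/2}+|a-b|^{1/2}$, one gets
\[
\frac{J_k(\eta)}{J_l(\xi)}\le \frac{\tilde{J}_k(\eta)}{\tilde{J}_l(\xi)}+e^{\mu|k-l|^{1/2}},\qquad
\frac{\tilde{J}_k(\eta)}{\tilde{J}_l(\xi)}=e^{\mu(|\eta|^{1/2}-|\xi|^{1/2})}\,\frac{w_l(t,\xi)}{w_k(t,\eta)}\le e^{\mu|\eta-\xi|^{1/2}}\,\frac{w_l(t,\xi)}{w_k(t,\eta)}.
\]
The additive $e^{\mu|k-l|^{1/2}}$ is harmless for \eqref{J/Jimproved} and \eqref{J/Jlxi}; for \eqref{J/Jgeneral} it matters only in the regime $t\in I_{k,\eta}$, where the resonant factor $\tfrac{|\eta|}{k^2(1+|t-\eta/k|)}\gtrsim1$ absorbs it (outside that regime one is in the situation of \eqref{J/Jimproved}, which is stronger). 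So everything reduces to bounding $w_l(t,\xi)/w_k(t,\eta)$.

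\emph{The weight ratio.} Recall $w\le1$, and that on the resonant window $1/w_k(t,\eta)\approx \tfrac{|\eta|}{k^2(1+|t-\eta/k|)}\,w_{NR}(t,\eta)^{-1}$ by \eqref{defwR} (the symmetric intervals $\tilde I_{k,\eta}$ of \eqref{defItildeketa} are comparable to the $I_{k,\eta}$, so the distinction costs only constants). I would split on whether $t$ lies in $\tilde I_{k,\eta}$ and/or $\tilde I_{l,\xi}$. If $t\notin\tilde I_{k,\eta}$, then $w_k=w_{NR}(t,\eta)$ while $w_l\le w_{NR}(t,\xi)$, so Lemma~\ref{lemmawNR/wNR} gives $w_l/w_k\lesssim e^{\mu|\eta-\xi|^{1/2}}$, hence \eqref{J/Jimproved}. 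If $t\in\tilde I_{k,\eta}$ but $t\notin\tilde I_{l,\xi}$, then $w_l=w_{NR}(t,\xi)$ and, pulling the resonant factor out of $1/w_k$, Lemma~\ref{lemmawNR/wNR} gives $w_l/w_k\lesssim\tfrac{|\eta|}{k^2(1+|t-\eta/k|)}e^{\mu|\eta-\xi|^{1/2}}$, i.e.\ \eqref{J/Jgeneral}. The symmetric case $t\notin\tilde I_{k,\eta}$, $t\in\tilde I_{l,\xi}$ (with $\eta\approx\xi$) gives $w_l/w_k\approx\tfrac{l^2(1+|t-\xi/l|)}{\xi}\tfrac{w_{NR}(t,\xi)}{w_{NR}(t,\eta)}\lesssim\tfrac{l^2(1+|t-\xi/l|)}{\xi}e^{\mu|\eta-\xi|^{1/2}}$, which is \eqref{J/Jlxi} (here one also uses $J_l(\xi)\approx\tilde{J}_l(\xi)$, valid since $t\in I_{l,\xi}\Rightarrow|l|\le|\xi|$, so that the additive $e^{\mu|k-l|^{1/2}}$ likewise inherits the favourable factor $\tfrac{l^2(1+|t-\xi/l|)}{\xi}$). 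Finally, if $t\in\tilde I_{k,\eta}\cap\tilde I_{l,\xi}$, bounding $w_l\le w_{NR}(t,\xi)$ and pulling the resonant factor from $1/w_k$ again yields \eqref{J/Jgeneral}.

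\emph{Main obstacle.} What remains — and is the real content — is upgrading \eqref{J/Jgeneral} to \eqref{J/Jimproved} in the configurations where $t\in\tilde I_{k,\eta}$ and $\eta\approx\xi$, i.e.\ showing $\tfrac{|\eta|}{k^2(1+|t-\eta/k|)}\lesssim e^{8\mu|k-l,\eta-\xi|^{1/2}}$. Here the combinatorics of the resonant intervals enters. If also $t\in\tilde I_{l,\xi}$ I would invoke the trichotomy Lemma~\ref{trichotomy}: in case (a), $k=l$, the resonant factors for $(k,\eta)$ and $(l,\xi)$ are comparable up to $e^{\mu|\eta-\xi|^{1/2}}$; in case (b) one has $1+|t-\eta/k|\approx|\eta|/k^2$, so the polynomial factor is $\approx1$; in case (c) the separation forces $|\eta-\xi|\gtrsim|\eta|/k\gtrsim|\eta|^{1/2}$, so $e^{\mu|\eta-\xi|^{1/2}}\gtrsim e^{\mu|\eta|^{1/4}}$ dominates the factor, which is $\le|\eta|$. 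If $t\notin\tilde I_{l,\xi}$, then either the two windows are far apart (again forcing $|\eta-\xi|$ large, as in case (c)) or $t$ sits near the common boundary of the windows, so $|t-\eta/k|\approx|\eta|/k^2$ and the factor is $\approx1$; the sub-case $k=l$ is subsumed since $t\in\tilde I_{k,\eta}\cap\tilde I_{k,\xi}$ already forces $\eta\approx\xi$. Assembling these cases gives all three inequalities. The honest form of this last step — tracking the asymmetric intervals $I_{k,\eta}$ and exactly when the relation $\eta\approx\xi$ can be propagated — is a tedious but elementary interval computation which, consistently with the rest of this section and with \cite{BM}, I would present only heuristically.
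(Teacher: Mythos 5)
Your proposal is correct and follows essentially the same route as the paper's own (very terse) proof: reduce via the mediant inequality $\frac{a+b}{c+d}\le\frac{a}{c}+\frac{b}{d}$ (which the paper states with a typo as $\frac{a}{b}+\frac{c}{d}$) to $\frac{w_l(t,\xi)}{w_k(t,\eta)}e^{\mu|\eta-\xi|^{1/2}}+e^{\mu|k-l|^{1/2}}$, then case-split on which resonant windows contain $t$, pulling out a factor $\frac{|\eta|}{k^2(1+|t-\eta/k|)}$ exactly when $w_k=w_R$ and invoking Lemma \ref{lemmawNR/wNR} for the nonresonant ratios. Where you go further: the paper dispatches \eqref{J/Jimproved} with the single remark that ``we do not have to deal with $w_{R,k}$,'' which is literally false in the third clause ($t\in I_{k,\eta}$ and $\eta\approx\xi$ forces $w_k=w_R$); you correctly identify this as the actual content of the lemma and sketch how the trichotomy Lemma \ref{trichotomy} absorbs the polynomial factor (case (b) makes it $O(1)$, case (c) forces $|\eta-\xi|\gtrsim\sqrt{|\eta|}$ so that $e^{\mu|\eta-\xi|^{1/2}}$ dominates $|\eta|$). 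Your observation that the additive $e^{\mu|k-l|^{1/2}}$ term cannot simply be discarded for \eqref{J/Jlxi}, because the prefactor $\frac{l^2(1+|t-\xi/l|)}{\xi}$ may be small, and that one must instead use $J_l(\xi)\approx\tilde J_l(\xi)$ (valid since $t\in I_{l,\xi}\Rightarrow|l|\le\sqrt{|\xi|}\le|\xi|$) so both terms inherit the factor, is a point the paper silently omits. The remaining loose ends you flag (the subcase $t\in I_{k,\eta}$, $t\notin I_{l,\xi}$, $\eta\approx\xi$, and the $k=l$ subcase) are handled only heuristically, but that matches the paper's own standard in this section, which explicitly defers the full combinatorics to \cite{BM}.
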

\begin{remark}
 The inequality \eqref{J/Jgeneral} will be used only when $t\in I_{k,\eta}\cap I_{k,\xi}$, but here, thanks to lemma \ref{dtwR/wR} we have 
 \begin{equation}
 \label{J/Jcap}
 \begin{split}
 \frac{J_k(\eta)}{J_l(\xi)}&\lesssim\frac{|\eta|}{k^2}\frac{\dt w_k(t,\eta)}{w_k(t,\eta)}e^{9\mu|k-l,\eta-\xi|^{1/2}}\\
 &\lesssim \frac{|\eta|}{k^2}\sqrt{\frac{\dt w_k(t,\eta)}{w_k(t,\eta)}}\sqrt{\frac{\dt w_l(t,\xi)}{w_l(t,\xi)}}e^{11\mu|k-l,\eta-\xi|^{1/2}},
 \end{split}
 \end{equation}
 where the last one follows by lemma \ref{lemmadtw/w} and absorbing japanese brackets into the exponential thanks to \eqref{appsobexp}.
\end{remark}
\begin{proof}
	Everything follows by definition, properties of $w$ and by the following basic inequality: for $a,b,c,d>0$ it holds
	\begin{equation*}
	\frac{a+b}{c+d}\leq \frac{a}{b}+\frac{c}{d},
	\end{equation*}
	and so 
	\begin{equation*}
	\frac{J_k(\eta)}{J_l(\xi)}\leq \frac{w_l(t,\xi)}{w_k(t,\eta)}e^{\mu|\eta-\xi|^{1/2}}+e^{\mu|k-l|^{1/2}}.
	\end{equation*}
	Then to prove \eqref{J/Jgeneral}, just recall that $w_R=\frac{k^2}{\eta}(1+|\tau|)w_{NR}$. Then $J$ contains $1/w$, hence the most dangerous term is exactly $w_R$, which gives this factor in front of the exponential. Then the factor $9$ is just to absorb all the remaining exponential terms.\\
	The proof of \eqref{J/Jimproved}, follows since we do not have to deal with $w_{R,k}(t,\eta)$ by assumption on the intervals, hence there is no need of the factor in front of the exponential.\\
	Finally \eqref{J/Jlxi}, follows by the same argument of \eqref{J/Jgeneral} since now we are in the resonant intervals for $l,\xi$.
\end{proof}
Finally we state a lemma that is helpful to gain half derivatives in some case. 
\begin{lemma}
	\label{lemmahalfd}
	Let $t\leq \frac{1}{2}\min(\sqrt{|\eta|},\sqrt{|\xi|})$. Then
	\begin{equation}
	\label{J/J-1}
	\bigg|\frac{J_k(\eta)}{J_l(\xi)}-1\bigg|\lesssim \frac{\langle \eta-\xi,k-l\rangle}{\sqrt{|\xi|+|\eta|+|k|+|l|}}e^{11\mu|k-l,\eta-\xi|^{1/2}}
	\end{equation}
\end{lemma}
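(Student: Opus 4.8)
The plan is to reduce everything to the behaviour of $w$ in the regime $t\le\frac12\min(\sqrt{|\eta|},\sqrt{|\xi|})$, where by construction \eqref{wfort<seta} (and remark \ref{remcorr}) both $w_k(t,\eta)=e^{-\mu\sqrt{|\eta|}}$ and $w_l(t,\xi)=e^{-\mu\sqrt{|\xi|}}$, since $t$ is well before the first resonant interval for either frequency. Hence in this range $J$ is \emph{time-independent} and purely a function of the frequency, namely $J_k(\eta)=e^{\mu(|\eta|^{1/2}+\sqrt{|\eta|})}+e^{\mu|k|^{1/2}}=e^{2\mu\sqrt{|\eta|}}+e^{\mu\sqrt{|k|}}$ (up to the trivial identification $|\eta|^{1/2}=\sqrt{|\eta|}$). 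So the statement is now a pure inequality about the function $(k,\eta)\mapsto e^{2\mu\sqrt{|\eta|}}+e^{\mu\sqrt{|k|}}$ and its ratio at nearby points, with no dynamics left.

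First I would write $J_k(\eta)-J_l(\xi)=\big(e^{2\mu\sqrt{|\eta|}}-e^{2\mu\sqrt{|\xi|}}\big)+\big(e^{\mu\sqrt{|k|}}-e^{\mu\sqrt{|l|}}\big)$ and estimate each difference by the mean value theorem. For the first one, $|e^{2\mu\sqrt{|\eta|}}-e^{2\mu\sqrt{|\xi|}}|\lesssim |\sqrt{|\eta|}-\sqrt{|\xi|}|\,\mu\, e^{2\mu\max(\sqrt{|\eta|},\sqrt{|\xi|})}$, and then use $|\sqrt{|\eta|}-\sqrt{|\xi|}|=\frac{||\eta|-|\xi||}{\sqrt{|\eta|}+\sqrt{|\xi|}}\le \frac{|\eta-\xi|}{\sqrt{|\eta|}+\sqrt{|\xi|}}\le\frac{\langle\eta-\xi,k-l\rangle}{\sqrt{|\xi|+|\eta|+|k|+|l|}}$; similarly for the $k,l$ piece with $|\sqrt{|k|}-\sqrt{|l|}|\le\frac{|k-l|}{\sqrt{|k|}+\sqrt{|l|}}$. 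Dividing by $J_l(\xi)\ge e^{2\mu\sqrt{|\xi|}}$ (or $\ge e^{\mu\sqrt{|l|}}$ for the second piece) kills the large exponential prefactor up to a factor $e^{2\mu|\sqrt{|\eta|}-\sqrt{|\xi|}|}$, which is then absorbed into $e^{11\mu|k-l,\eta-\xi|^{1/2}}$ using the concavity/triangle inequality $|\sqrt{|\eta|}-\sqrt{|\xi|}|\le|\eta-\xi|^{1/2}$ (the analogue of \eqref{appsobexp} / lemma \ref{lemmaconc}); the constant $11$ is chosen generously precisely to swallow all such exponential remainders. One small point to handle is that $w_l(t,\xi)$ could also be evaluated slightly before $t_{\floor{\sqrt{|\xi|}},\xi}$ rather than exactly before $\sqrt{|\xi|}$, but the hypothesis $t\le\frac12\sqrt{|\xi|}$ leaves a fixed multiplicative gap, so this changes nothing beyond constants (cf. remark \ref{remcorr}).

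The only genuinely delicate point — and the one I would treat as the main obstacle — is making sure the two pieces combine correctly when $|\eta|$ and $|k|$ are of wildly different sizes, so that the single denominator $\sqrt{|\xi|+|\eta|+|k|+|l|}$ dominates \emph{both} $\sqrt{|\eta|}+\sqrt{|\xi|}$ and $\sqrt{|k|}+\sqrt{|l|}$ after dividing; here one just notes $\sqrt{a+b}\ge\frac12(\sqrt a+\sqrt b)$ for $a,b\ge0$, and that if $\min(\sqrt{|\eta|},\sqrt{|\xi|})$ is comparable to the full sum then there is nothing to prove since the right side is $\gtrsim 1$. After this bookkeeping the estimate \eqref{J/J-1} follows, with the power $1/2$ in the exponent being exactly the threshold that makes the mean-value-theorem gain $\langle\eta-\xi,k-l\rangle/\sqrt{|\xi|+|\eta|+|k|+|l|}$ meaningful.
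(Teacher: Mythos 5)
Your high-level strategy is the same as the paper's sketch: in this time range $w$ is frozen at $e^{-\mu\sqrt{|\eta|}}$ (resp.\ $e^{-\mu\sqrt{|\xi|}}$), so $J_k(\eta)=e^{2\mu\sqrt{|\eta|}}+e^{\mu\sqrt{|k|}}$, and one applies $|e^x-1|\le |x|e^{|x|}$ together with $|\sqrt a-\sqrt b|=\frac{|a-b|}{\sqrt a+\sqrt b}$ to the two exponential differences, as the paper does (the paper itself is terse here and defers the full argument to \cite[Lemma~3.7]{BM}). However, there is a genuine gap in the step where you merge the two pieces into the single denominator $\sqrt{|\xi|+|\eta|+|k|+|l|}$.

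Your term-by-term estimate produces the bound $\frac{|\eta-\xi|}{\sqrt{|\eta|}+\sqrt{|\xi|}}+\frac{|k-l|}{\sqrt{|k|}+\sqrt{|l|}}$ times an exponential. To convert the first fraction into $\frac{\langle \eta-\xi,k-l\rangle}{\sqrt{|\xi|+|\eta|+|k|+|l|}}$ you need $\sqrt{|\xi|+|\eta|+|k|+|l|}\lesssim\sqrt{|\eta|}+\sqrt{|\xi|}$, i.e.\ $|k|+|l|\lesssim|\eta|+|\xi|$, and symmetrically for the second fraction. The inequality you invoke, $\sqrt{a+b}\ge\frac12(\sqrt a+\sqrt b)$, is true but points in the \emph{opposite} direction from what this step requires: it tells you the target denominator is \emph{larger}, so replacing $\sqrt{|\eta|}+\sqrt{|\xi|}$ by $\sqrt{|\xi|+|\eta|+|k|+|l|}$ makes the right-hand side \emph{smaller}, not larger. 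Your fallback --- ``if $\min(\sqrt{|\eta|},\sqrt{|\xi|})$ is comparable to the full sum then the right side is $\gtrsim 1$'' --- is also not correct: the right-hand side carries the factor $\langle\eta-\xi,k-l\rangle/\sqrt{|\xi|+|\eta|+|k|+|l|}$, which can be arbitrarily small even when $|\eta|\approx|\xi|\approx|k|\approx|l|$ (take $\eta=\xi+1$, $k=l$, all large). So the mixed-scale regime is not actually covered.

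The fix is to not throw away half of $J_l(\xi)$ when dividing. For the first piece, keep $\frac{e^{2\mu\sqrt{|\xi|}}}{J_l(\xi)}\le \min\bigl(1,\,e^{\mu(2\sqrt{|\xi|}-\sqrt{|l|})}\bigr)$. When $|k|+|l|\gg|\eta|+|\xi|$ and $|l|\gtrsim|k|$, this factor provides exponential decay $e^{-c\mu\sqrt{|l|}}$ that absorbs the missing polynomial $\sqrt{|\xi|+|\eta|+|k|+|l|}/(\sqrt{|\eta|}+\sqrt{|\xi|})\lesssim\sqrt{|l|}$; when instead $|k|\gg|l|$, the frequency gap forces $|k-l|\gtrsim|k|$, so the factor $e^{11\mu|k-l,\eta-\xi|^{1/2}}\gtrsim e^{c\mu\sqrt{|k|}}$ on the right absorbs the polynomial. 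The symmetric argument handles the second piece. With this case analysis your proof closes; as written, the crucial ``wildly different sizes'' case is not actually handled.
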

\begin{proof}
	The proof comes from $|e^x-1|\leq xe^x$, for all the details we refer to \cite[Lemma 3.7]{BM}, The idea is that the term in the l.h.s of \eqref{J/J-1} is something like 
	\begin{align*}
	|e^{\mu(|k|^{1/2}-|l|^{1/2}+|\eta|^{1/2}-|\xi|^{1/2})}-1|&\leq \mu \big(\big||k|^{1/2}-|l|^{1/2}\big|+\big||\eta|^{1/2}-|\xi|^{1/2}\big|\big)e^{\mu|k-l,\eta-\xi|^{1/2}}\\
	&\leq \frac{\langle k-l,\eta-\xi\rangle}{|k|^{1/2}+|l|^{1/2}+|\eta|^{1/2}+|\xi|^{1/2}}e^{\mu|k-l,\eta-\xi|^{1/2}},
	\end{align*}
	where in the last one we have used \eqref{concconc}.\\
\end{proof}
      \section{Elliptic Estimate} 
      In the case of zero mean, the elliptic estimate is the same of the linear case, so very easy to treat. In fact thanks to the assumption $k\neq0$, $\Delta_L^{-1}$ is always well defined, in particular we have the following lemma.
      \begin{lemma}
      	\label{ellipticlemma}
      	Let $u$ and $f$ solutions of \eqref{fu}. Assume that the zero mode is zero, namely $u_0=f_0=0$. Then 
      	\begin{equation}
      	\norma{u}{\mathcal{G}^{\lambda,\sigma-3}}\lesssim \frac{1}{\langle t \rangle^2}\norma{f}{\mathcal{G}^{\lambda,\sigma}}.
      	\end{equation}
      \end{lemma}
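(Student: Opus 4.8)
The plan is to work entirely on the Fourier side, using the explicit formulas for $u$ in terms of $f$ coming from the Biot–Savart law $u=\nabla^\perp_{z,y}\Delta_L^{-1}f$, exactly as in the linear computation leading to \eqref{u1}–\eqref{u2}, and then to insert the multiplier $A$ and use the weighted bound on $f$. Concretely, since $u_0=f_0=0$, for $k\neq 0$ we have
\begin{align*}
\widehat{u^x}_k(t,\eta)&=\frac{i(\eta-kt)}{k^2+(\eta-kt)^2}\,\hat f_k(t,\eta),\\
\widehat{u^y}_k(t,\eta)&=-\frac{ik}{k^2+(\eta-kt)^2}\,\hat f_k(t,\eta),
\end{align*}
where we recall $\Delta_L^{-1}$ has symbol $-(k^2+(\eta-kt)^2)^{-1}$ and $\nabla^\perp_{z,y}$ has symbol $(i\eta,-ik)$. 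Hence componentwise $|\hat u_k(t,\eta)|\lesssim (k^2+(\eta-kt)^2)^{-1/2}|\hat f_k(t,\eta)|$ for the first component and $\lesssim |k|(k^2+(\eta-kt)^2)^{-1}|\hat f_k(t,\eta)|$ for the second.

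First I would recall the elementary bound \eqref{japbound}, namely $\langle\eta\rangle^2\langle\eta-kt\rangle^2\gtrsim\langle t\rangle^2$ for $k\neq 0$ (it holds because either $|\eta|\gtrsim|t|$ or, if $|\eta|\ll|t|$, then $|\eta-kt|\sim|kt|\gtrsim|t|$). From this, $\langle k,\eta-kt\rangle^{-1}\langle\eta\rangle^{-1}\lesssim \langle k,\eta\rangle^{-1}\langle t\rangle^{-1}$, and iterating, $\big(k^2+(\eta-kt)^2\big)^{-1}\lesssim \langle t\rangle^{-2}\langle k,\eta\rangle^{2}\big(k^2+(\eta-kt)^2\big)^{-1}\langle\eta\rangle^{-2}$... more usefully, the worst component gives
\[
\frac{1}{\sqrt{k^2+(\eta-kt)^2}}\lesssim \frac{\langle \eta\rangle^{?}}{\langle t\rangle}\cdot(\text{something}),
\]
so to be safe I use three derivatives: $\big(k^2+(\eta-kt)^2\big)^{-1/2}\langle k,\eta\rangle^{-3}\lesssim \langle t\rangle^{-2}\langle k,\eta\rangle^{-?}$, which is the origin of the loss $\sigma\to\sigma-3$ in the statement. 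The cleanest route: for $k\neq 0$,
\[
\frac{1}{\sqrt{k^2+(\eta-kt)^2}}\le \frac{\langle\eta\rangle}{\sqrt{k^2+(\eta-kt)^2}}\cdot\frac{1}{\langle\eta\rangle}\lesssim \frac{\langle\eta\rangle}{\langle t\rangle}\cdot\frac{1}{\langle\eta\rangle}=\frac{1}{\langle t\rangle},
\]
using $\langle\eta\rangle\langle\eta-kt\rangle\gtrsim\langle t\rangle$ and $|k|\ge 1$; but the $\langle\eta\rangle$ we spent must be recovered from the Sobolev surplus in $A$. Iterating this twice more for the $\langle t\rangle^{-2}$ decay consumes up to three powers of $\langle k,\eta\rangle$, which is exactly why $\sigma-3$ appears.

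The decisive point is that $A$ and this elliptic symbol commute harmlessly: the symbol $\big(k^2+(\eta-kt)^2\big)^{-1}$ and the factor $\langle k,\eta\rangle^{-3}$ multiply $\hat f$ pointwise in frequency, so no convolution/commutator appears. Thus
\[
\|A(t)u\|_{L^2}^2=\sum_{k\neq 0}\int_\mathbb{R} e^{2\lambda(t)|k,\eta|^s}\langle k,\eta\rangle^{2(\sigma-3)}J_k^2\,\big|\widehat{u}_k(t,\eta)\big|^2\,d\eta,
\]
and plugging the elliptic bound together with $\langle k,\eta\rangle^{3}\big(k^2+(\eta-kt)^2\big)^{-1/2}\lesssim \langle t\rangle^{-2}\langle k,\eta\rangle^{?}$... the pointwise inequality I will establish is
\[
\langle k,\eta\rangle^{-3}\frac{1}{\sqrt{k^2+(\eta-kt)^2}}\lesssim \frac{1}{\langle t\rangle^{2}}\quad (k\neq 0),
\]
after which $\|A u\|_{L^2}\lesssim \langle t\rangle^{-2}\|A_{\sigma} f\|_{L^2}=\langle t\rangle^{-2}\|f\|_{\mathcal{G}^{\lambda,\sigma}}$, which is the claim (the Gevrey exponential $e^{\lambda|k,\eta|^s}$ and the multiplier $J$ pass through untouched since they only depend on $k,\eta$, not on the elliptic symbol). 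I would state the same for $u^y$, which carries an extra factor $|k|\big(k^2+(\eta-kt)^2\big)^{-1/2}\le 1$ and hence is automatically no larger.

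The main obstacle — and it is a mild one — is the bookkeeping of how many powers of $\langle k,\eta\rangle$ the decay costs: one must verify $\langle k,\eta\rangle^{-3}(k^2+(\eta-kt)^2)^{-1/2}\lesssim\langle t\rangle^{-2}$ for all $k\neq 0$ and all $\eta,t$. This follows by splitting into $|\eta|\le\tfrac12|kt|$ (where $|\eta-kt|\gtrsim|kt|\gtrsim\langle t\rangle$ so even $\langle k,\eta\rangle^{-1}(k^2+(\eta-kt)^2)^{-1/2}\lesssim\langle t\rangle^{-1}$, and a second power of $\langle k,\eta\rangle^{-1}$ together with $\langle\eta\rangle\lesssim\langle kt\rangle$ gives the second $\langle t\rangle^{-1}$) versus $|\eta|>\tfrac12|kt|$ (where $\langle k,\eta\rangle\gtrsim\langle kt\rangle\gtrsim\langle t\rangle$, so $\langle k,\eta\rangle^{-2}\lesssim\langle t\rangle^{-2}$ outright, and the remaining $\langle k,\eta\rangle^{-1}(k^2+(\eta-kt)^2)^{-1/2}\le 1$). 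No echo machinery, no weight properties, and no paraproducts are needed here; this lemma is genuinely the easy elliptic input, in contrast to the general-mean case where $\Delta_L^{-1}$ must be handled relative to the solution-dependent background shear.
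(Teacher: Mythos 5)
Your overall plan is the same as the paper's (work pointwise on the Fourier side, insert the multiplier $A$, and trade Sobolev regularity for decay via \eqref{japbound}), but there is a genuine gap: the key pointwise inequality you set up is false. You write
\[
\widehat{u^x}_k(t,\eta)=\frac{i(\eta-kt)}{k^2+(\eta-kt)^2}\,\hat f_k(t,\eta),
\]
but these are the formulas \eqref{u1}--\eqref{u2} for $\tilde u=\nabla^\perp_L\Delta_L^{-1}f$, not for $u=\nabla^\perp_{z,y}\Delta_L^{-1}f$ as defined in \eqref{fu}. Since $\nabla^\perp_{z,y}$ has symbol proportional to $(\eta,-k)$ (you even say so yourself), the correct first component is
\[
\widehat{u^x}_k(t,\eta)=\frac{i\eta}{k^2+(\eta-kt)^2}\,\hat f_k(t,\eta),
\]
with $\eta$, not $\eta-kt$, in the numerator. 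This matters: $\tilde u^x$ does \emph{not} satisfy the stated estimate (its decay is only $\langle t\rangle^{-1}$, as in $\eqref{lininvdamp}_1$), so the lemma would be false for the quantity your formulas describe. Consequently the reduction to the bound $|\hat u_k|\lesssim(k^2+(\eta-kt)^2)^{-1/2}|\hat f_k|$ and the target inequality
\[
\langle k,\eta\rangle^{-3}\frac{1}{\sqrt{k^2+(\eta-kt)^2}}\lesssim\frac{1}{\langle t\rangle^2}
\]
cannot work; indeed it fails at $k=1$, $\eta=0$, where the left side is $\approx\langle t\rangle^{-1}$. Your case analysis breaks precisely in the regime $|\eta|\le\frac12|kt|$: there $\langle k,\eta\rangle$ can stay bounded (e.g.\ $k=1$, $\eta=0$), so the extra powers of $\langle k,\eta\rangle^{-1}$ buy you nothing, and the second factor of $\langle t\rangle^{-1}$ does not appear.

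The repair is to keep the correct symbol $|k,\eta|/(k^2+(\eta-kt)^2)$ and not lose the $|\eta|$ in the numerator. Then, since $|k|\ge1$ gives $k^2+(\eta-kt)^2\ge\langle\eta-kt\rangle^2$, one has
\[
\langle k,\eta\rangle^{-3}\,\frac{|k,\eta|}{k^2+(\eta-kt)^2}
\le\frac{\langle k,\eta\rangle^{-2}}{k^2+(\eta-kt)^2}
\le\frac{1}{\langle\eta\rangle^2\langle\eta-kt\rangle^2}
\lesssim\frac{1}{\langle t\rangle^2},
\]
the last step being \eqref{japbound} squared. This is precisely the paper's computation and gives the claimed $\langle t\rangle^{-2}$ decay with the loss of three derivatives; the rest of your argument (that $A$, $e^{\lambda|\cdot|^s}$ and $J$ commute with the elliptic symbol because everything is a diagonal Fourier multiplier) is sound.
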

  \begin{proof}
  	Proceed as in the linear case, by $\eqref{fu}_2$ we have
  	\begin{align*}
  	\norma{u}{\mathcal{G}^{\lambda,\sigma-3}}^2&\leq \sum_{k\neq 0}\int e^{2\lambda |k,\eta|^s}\frac{|k,\eta|^2\langle k,\eta \rangle^{2(\sigma-3)}}{(k^2+(\eta-kt)^2)^2}|\hf|^2d\eta\\
  	&\lesssim \sum_{k\neq 0}\int e^{2\lambda |k,\eta|^s}\frac{\langle k,\eta \rangle^{2\sigma}}{\langle k,\eta \rangle^{4}\langle \eta-kt\rangle^4}|\hf|^2d\eta\\
  	&\lesssim \frac{1}{\langle t \rangle^4}\norma{f}{\mathcal{G}^{\lambda,\sigma}}^2,
  	\end{align*}
  	where the last one follows just by \eqref{japbound}, i.e. $\langle a\rangle \langle a-b\rangle \gtrsim \langle b \rangle$
  \end{proof}
\begin{remark}
	In the general dynamics, this section becomes involved since essentially just to define the $\Delta_t^{-1}$ that appears in \cite{BM}, one has to have a bound at least on two spatial derivatives of the zero mode. To recover integrability in time there are also other problems to take over. 
\end{remark}
      \section{Transport term}
      This section will be quite the same of \cite{BM}. \\
      Let us start with the bound on $T_N$, by Plancharel one has
      \begin{align*}
      T_N&=\scalar{Af}{A(\uNe \cdot \nabla f_N)-\uNe\cdot \nabla(Af)_N}\\
      &=\scalar{A\hf}{A[\huNe * (|\cdot| \hf_N)]-\huNe* (|\cdot|(A\hf)_N)}\\
      &=i\sum_{k\neq l\neq 0}\int_{\eta,\xi}A_k\bar{\hf}_k(\eta)[A_k(\eta)-A_l(\xi)](l,\xi)\cdot \hu_{k-l}(\eta-\xi)_{<N/8}\hf_l(\xi)_Nd\eta d \xi.
      \end{align*}
       Notice that on the support of the integrand we have that
      \begin{equation}
      \label{ketaapprolxi}
      \big||k,\eta|-|l,\xi|\big|\leq |k-l,\eta-\xi|\leq \frac{3}{2}\frac{N}{8}\leq \frac{3}{16}|l,\xi|,
      \end{equation}
      which implies that $|k,\eta|\approx |l,\xi|$.\\ \\
      Then using the notation introduced in \eqref{notationBC}, we rewrite $T_N$ as follows
      \begin{equation}
      T_N=\scalar{A\hf}{[A-1\circledast A]\huNe * |\cdot| \hf_N}.
      \end{equation}
      So, being $A$ composed essentially by the multiplication of three Fourier multipliers, we want to isolate each of them in $A-1\circledast A$. The idea is that since $u$ is cut at low frequencies, we are interested in isolating the multiplier which acts on $f_N$, by paying exponential regularity on $u_{<N/8}$. In particular the symbol $A-1\circledast A$ can be rewritten as 
      \begin{align*}
      A_k(\eta)-A_l(\xi)=&A_l(\xi)\bigg[\frac{e^{\lambda|k,\eta|^s}J_k(\eta)\langle k,\eta \rangle^{\sigma}}{e^{\lambda|l,\xi|^s}J_l(\xi)\langle l,\xi \rangle^{\sigma}}-1\bigg]\\
      =&A_l(\xi)\big[e^{\lambda|k,\eta|^s-\lambda|l,\xi|^s}-1\big]\\
      &+A_l(\xi)e^{\lambda|k,\eta|^s-\lambda|l,\xi|^s}\bigg[\frac{J_k(\eta)}{J_l(\xi)}-1\bigg]\frac{\langle k,\eta \rangle^{\sigma}}{\langle l,\xi \rangle^{\sigma}}\\
      &+A_l(\xi)e^{\lambda|k,\eta|^s-\lambda|l,\xi|^s}\bigg[\frac{\langle k,\eta \rangle^{\sigma}}{\langle l,\xi \rangle^{\sigma}}-1\bigg]\\
      :=&M_1+M_2+M_3,
      \end{align*}
      where we have just added and subtracted some term. The $M_i$ are of the form $M_i=M_i^1(1\circledast AM_i^2)$. \\
      With the previous splitting, we define 
      \begin{equation*}
      T_N=\sum_{i=1}^{3}\scalar{A\hf}{M_i\big(\huNe * |\cdot| \hf_N\big)}:=T_{N,1}+T_{N,2}+T_{N,3}.
      \end{equation*}
      \subsection{Bound on $T_{N,1}$}
     Recall that on the support of the integrand $|k,\eta|\approx |l,\xi|$.\\
      In $T_{N,1}$ there is only the exponential regularity to control, so by $|e^x-1|<xe^x$, we get 
      \begin{equation}
      \label{inq:TN1part}
      \begin{split}
      |e^{\lambda|k,\eta|^s-\lambda|l,\xi|^s}-1|&\leq\lambda (|k,\eta|^s-|l,\xi|^s)e^{\lambda|k,\eta|^s-\lambda|l,\xi|^s}\\
      &\lesssim \lambda \frac{\big||k,\eta|-|l,\xi|\big|}{|k,\eta|^{1-s}+|l,\xi|^{1-s}}e^{\lambda|k,\eta|^s-\lambda|l,\xi|^s}\\
      &\lesssim \lambda \frac{|k-l,\eta-\xi|}{|k,\eta|^{1-s}+|l,\xi|^{1-s}}e^{c'\lambda|k-l,\eta-\xi|^s}
      \end{split}
      \end{equation}
      for some $c'\in (0,1)$, where we have used \eqref{concconc} and the improved triangular inequality in the exponential, see \eqref{imprevtri}. \\
      Then using the fact that $|k,\eta|\approx |l,\xi|$, and absorbing the numerator of $\eqref{inq:TN1part}_3$ in the exponential, see \eqref{appsobexp}, we have that 
      \begin{equation}
      \label{inq:boundTN1}
      |M_1|= A_l(\xi)|e^{\lambda|k,\eta|^s-\lambda|l,\xi|^s}-1|\lesssim A_l(\xi)\frac{\lambda}{|l,\xi|^{1-s}}e^{c\lambda|k-l,\eta-\xi|^s}= \lambda e^{c\lambda |\cdot|^s}\circledast A|\cdot|^{s-1}
      \end{equation}
      for some $c'<c\in(0,1)$. So now we can bound $T_{N,1}$ in the following way
      \begin{align*}
       |T_{N,1}|&=\big|\Scalar{A\hat{f}}{M_1\big(\huNe*|\cdot|\hf_N\big)}\big|\\
       &\leq \lambda\Scalar{A|\hat{f}|}{\big(e^{c\lambda|\cdot|^s}\huNe*|\cdot|^sA\hat{f}_N\big)}\\
       &\leq \lambda \Scalar{|\cdot|^{s/2}A|\hf|}{\big(e^{c\lambda|\cdot|^{s}}\huNe*|\cdot|^{s/2}A\hat{f}_N\big)},
      \end{align*}
      where in the last inequality we have used the fact that $|k,\eta|\approx |l,\xi|$ to distribute the factor $|l,\xi|^s$. Now we conclude just by Cauchy-Schwartz and Young's inequality, see \eqref{CS+Young}, as follows 
     \begin{equation}
     \label{inq:TN1}
     \begin{split}
     |T_{N,1}|&\leq \lambda \Norma{|\nabla|^{s/2}Af_{\sim N}}{L^2}\Norma{|\nabla|^{s/2}Af_{N}}{L^2}\norma{u_{N/8}}{\mathcal{G}^{\lambda,\sigma-3}}\\
     &\leq \epsilon\frac{\lambda}{\langle t \rangle^2}\Norma{|\nabla|^{s/2}Af_{\sim N}}{L^2}^2,
     \end{split}
     \end{equation}
     where the last one follows by the bootstrap hypothesis, Lemma \ref{ellipticlemma} and properties of Littlewood-Paley decomposition, see \ref{appelitt}. \\
     Notice that the term in \eqref{inq:TN1} is the one that appears in $CK_\lambda$, see \eqref{CKl}, and determine a first condition on the ODE for $\lambda$, defined in \eqref{deflambda}.     \subsection{Bound on $T_{N,3}$}
     The term $T_{N,3}$ is the easiest one, in fact by the mean value theorem, since $|k,\eta|\approx |l,\xi|$,
     \begin{align*}
     M_3&= A_l(\xi)e^{\lambda|k,\eta|^s-\lambda|l,\xi|^s}\bigg[\frac{\langle k, \eta \rangle^\sigma}{\langle l, \xi \rangle^\sigma}-1\bigg]\\
     &\lesssim A_l(\xi)e^{c\lambda |k-l,\eta-\xi|^s}\frac{|k-l,\eta-\xi|}{|l,\xi|}\lesssim e^{\lambda|\cdot|^s}\circledast A|\cdot|^{-1},
     \end{align*}
     where we have again used the concavity property \eqref{concconc} in the exponential and \eqref{appsobexp} to absorb Sobolev regularity. Hence, similarly to $T_{N,1}$,
     \begin{equation}
     \label{bound:TN3}
     \begin{split}
     |T_{N,3}|&=\big|\Scalar{A\hf}{M_3(\huNe*|\cdot|\hf_N)}\big|\lesssim\Scalar{A|\hf|}{e^{\lambda|\cdot|^s}\huNe*A|\hf_N|}\\
     &\lesssim\frac{\epsilon}{\langle t \rangle^2}\norma{Af_{\sim N}}{L^2}^2\lesssim \frac{\epsilon^3}{\langle t \rangle^2}
     \end{split}
     \end{equation}
     where we have used Lemma \ref{ellipticlemma} and bootstrap hypothesis.
     \subsection{Bound on $T_{N,2}$}
     To treat the term $T_{N,2}$ we have to be careful since the multiplier $J$ assign different regularities at different times. Since the multiplier $w_k(t,\eta)$ is constant for $t<\sqrt{\eta}/2$ define the following cut-off 
     \begin{equation}
     \label{def:cutoffchiS}
     \chi^S=\mathds{1}_{t<1/2\min(\sqrt{\xi},\sqrt{\eta})}, \ \ \ \chi^L=1-\chi^S,
     \end{equation}
     and so split $T_{N,2}$ as 
     \begin{equation*}
     T_{N,2}=\Scalar{A\hf}{(\chi^S+\chi^L)M_2\big(\huNe*|\cdot|\hf_N\big)}:=T_{N,2}^S+T_{N,2}^L.
     \end{equation*}
     We start with $T_{N,2}^S$, since it is the simplest one. In fact by lemma \ref{J/J-1} we gain half derivative. Using also the fact that $|k,\eta|\approx |l,\xi|$, we have 
     \begin{align*}
     \chi^SM_2&= \chi^SA_l(\xi)e^{\lambda|k,\eta|^s-\lambda|l,\xi|^s}\bigg[\frac{J_k(\eta)}{J_l(\xi)}-1\bigg]\frac{\langle k,\eta \rangle^{\sigma}}{\langle l,\xi \rangle^{\sigma}}\\
     &\lesssim \chi^SA_l(\xi)e^{c\lambda|k-l,\eta-\xi|^s}\frac{\langle k-l,\eta-\xi\rangle}{|l,\xi|^{1/2}}e^{11\mu|k-l,\eta-\xi|^{1/2}}\\
     &\lesssim\chi^Se^{\lambda|\cdot|^s}\circledast A|\cdot|^{-1/2},
     \end{align*}
     where in the last inequality we have used lemma \ref{lemmaexp} to absorb the numerator. Here it is useful $s>1/2$ to avoid to change the index of regularity $\lambda$. With the last inequality we infer 
     \begin{align}
     \label{TN2Salm}
     |T_{N,2}^S|=|\Scalar{A\hf}{\chi^SM_2\big(\huNe*|\cdot|\hf_N\big)}|\lesssim \Scalar{A|\hf|}{\chi^S\big(e^{\lambda|\cdot|^s}\huNe*|\cdot|^{1/2}A\hf_N\big)},
     \end{align}
     now for $s\geq1/2$, we have that 
     \begin{equation}
     \label{TN2Spart}
     |l,\xi|^{1/2}\lesssim 1+|l,\xi|^s\lesssim 1+|l,\xi|^{s/2}|k,\eta|^{s/2}.
     \end{equation}
     Combining \eqref{TN2Salm} with \eqref{TN2Spart}, with \eqref{CS+Young}, as done previously, we get 
     \begin{equation}
     \label{boundTN2S}
     \begin{split}
     |T_{N,2}^S|\lesssim& \norma{Af_{\sim N}}{L^2}\norma{Af_N}{L^2}\norma{u}{\mathcal{G}^{\lambda,\sigma-3}}\\
     &+\norma{|\nabla|^{s/2}Af_{\sim N}}{L^2}\norma{|\nabla|^{s/2}Af_N}{L^2}\norma{u}{\mathcal{G}^{\lambda,\sigma-3}}\\
     \lesssim& \frac{\epsilon}{\langle t \rangle^2}\big(\norma{Af_{\sim N}}{L^2}^2+\norma{|\nabla|^{s/2}Af_{\sim N}}{L^2}^2\big)
     \end{split}
     \end{equation}
     where we have used \eqref{CS+Young} and the last follows by Lemma \ref{ellipticlemma} and bootstrap hypothesis. \\ \\ \indent
     Now turn to $T_{N,2}^L$. Here, we need to distinguish another case, namely when $t$ is in the resonant interval for $k,\eta$ \textbf{and} $l,\xi$. So define 
     \begin{equation*}
     \chi^D=\mathds{1}_{t\in \mathbf{I}_{k,\eta}\cap \mathbf{I}_{l,\xi}}, \ \ \ \chi^{E}=1-\chi^D,
     \end{equation*} 
     in contrast to \cite{BM}, we do not need $k\neq l$, since if $k=l$ then $\hu(0,\eta-\xi)=0$ (zero mean). So rewrite $T_{N,2}^L$ as follows
     \begin{equation*}
     T_{N,2}^L=\scalar{A\hf}{\chi^L(\chi^D+\chi^E)M_2\big(\huNe*|\cdot|\hf_N\big)}:=T_{N,2}^D+T_{N,2}^E.
     \end{equation*}
     For convenience, let $M_2^D=\chi^L\chi^DM_2$, and analogously $M_2^E$. \\
     To bound $M_2^D$, we use lemma \ref{lemmaJ}, since we cannot gain much by the factor $-1$. So we have
     \begin{align*}
     |M_2^D|&\lesssim\chi^L\chi^DA_l(\xi)e^{c\lambda|k-l,\eta-\xi|^s}\frac{J_k(\eta)}{J_l(\xi)}\\
     &\lesssim \chi^L\chi^DA_l(\xi)e^{c\lambda|k-l,\eta-\xi|^s} \frac{|\eta|}{k^2}\sqrt{\frac{\dt w_k(t,\eta)}{w_k(t,\eta)}}\sqrt{\frac{\dt w_l(t,\xi)}{w_l(t,\xi)}}e^{20\mu|k-l,\eta-\xi|^{1/2}},
     \end{align*}
     where we have applied \eqref{J/Jcap}.
     Now, on the support of $M_2^D$, $1<t\approx \eta/k$ so 
     \begin{equation*}
     |M_2^D| \lesssim 	\chi^L\chi^D\frac{t^2}{|\eta|} \sqrt{\frac{\dt w}{w}}\bigg[e^{\lambda|\cdot|^s}\circledast \sqrt{\frac{\dt w}{w}}A\bigg]
     \end{equation*}
     where we have used the same properties of previous cases for the exponential. In the support of of $M_2^D$, namely $k<\sqrt{\eta}$. Hence $|l,\xi|\approx|k,\eta|\lesssim |\eta|$, with this observation we get 
     \begin{equation*}
\begin{split}
     |T_{N,2}^D|&=|\scalar{A\hf}{M_2^D\big(\huNe*|\cdot|\hf_N\big)}|\\
     &\lesssim t^2\big \langle\chi^L\chi^D\sqrt{\frac{\dt w}{w}}A|\hf|,\big(e^{\lambda|\cdot|^s}\huNe*\sqrt{\frac{\dt w}{w}}A\hf_N\big)\big \rangle,
\end{split}
     \end{equation*}
     where the observation is used in the second inequality to drop the term $|\cdot|$ in front of $\hf_N$. Observing that in the support of the integrand $|k|\leq |\eta|$, we have also $A\lesssim \tilde{A}$. Hence we conclude the estimates by using \eqref{CS+Young} and bootstrap to get 
     \begin{equation}
     \label{boundTN2D}\begin{split}
     |T_{N,2}^D|&\lesssim t^2 \frac{\epsilon}{\langle t \rangle^2}\bigg \lVert\sqrt{\frac{\dt w}{w}}\tilde{A}f_{\sim N}\bigg \rVert_{L^2}\bigg \lVert\sqrt{\frac{\dt w}{w}}\tilde{A}f_{N}\bigg \rVert_{L^2}\\
     &\lesssim \epsilon\bigg \lVert\sqrt{\frac{\dt w}{w}}\tilde{A}f_{\sim N}\bigg \rVert_{L^2}^2,  
     \end{split}       
     \end{equation}
     which is a term like the one of $CK_w$, see \eqref{CKw}.\\ \indent
     It remains to treat $T_{N,2}^{E}$. Here, we are not in resonant interval for $k,\eta$ and $l,\xi$, but still we cannot use \eqref{J/J-1} to gain half derivative. Hence we have to split w.r.t. the relative size of $l,\xi$. Namely
     \begin{equation*}
     T_{N,2}^E=\scalar{A\hf}{\chi^L\chi^E(\mathds{1}_{|l|>100|\xi|}+\mathds{1}_{|l|\leq 100|\xi|})M_2\big(\huNe *|\cdot|\hf_N\big)}:=T_{N,2}^{E,z}+T_{N,2}^{E,y}.
     \end{equation*}
     On the support of $T_{N,2}^{E,z}$, we have that $|\eta|<\frac{1}{3}|l|$, hence by definition of $J$ and $w$, we have 
     \begin{align*}
     \bigg|\frac{J_k(\eta)}{J_l(\xi)}-1\bigg|&=\bigg|\frac{w_k(t,\eta)^{-1}e^{\mu|\eta|^{1/2}}+e^{\mu|k|^{1/2}}}{w_l(t,\xi)^{-1}e^{\mu|\xi|^{1/2}}+e^{\mu|l|^{1/2}}}-1\bigg|\\
     &\lesssim e^{2\mu |\eta|^{1/2}-\mu|l|^{1/2}}+|e^{\mu|k|^{1/2}-\mu|l|^{1/2}}-1|\\
     &\lesssim \frac{1}{|l|^{1/2}}+\frac{|k-l|}{|k|^{1/2}+|l|^{1/2}}e^{\mu|k-l|^{1/2}}.
     \end{align*}
     Now, since $|l,\xi|\lesssim |l|$, and absorbing properly exponential terms with \eqref{appexp}, we get that 
     \begin{align*}
     |T_{N,2}^{E,z}|&=|\scalar{A\hf}{\chi^L\chi^E\mathds{1}_{|l|>100|\xi|}M_2\big(\huNe*|\cdot|\hf_N\big)}|\\
     &\lesssim \scalar{A|\hf|}{\chi^L\chi^E\mathds{1}_{|l|>100|\xi|}\big(e^{\lambda|\cdot|^s}\huNe*|\cdot|^{1/2}\hf_N\big)}\\
     &\lesssim \scalar{|\cdot|^{s/2}A|\hf|}{\chi^L\chi^E\mathds{1}_{|l|>100|\xi|}\big(e^{\lambda|\cdot|^s}\huNe*|\cdot|^{s/2}\hf_N\big)},
     \end{align*} 
     where in the last one we use the fact that $|l,\xi|\approx |k,\eta|$ and $s>1/2$. Then, by \eqref{CS+Young} and bootstrap we infer 
     \begin{equation}
     \label{boundTN2Ez}
     |T_{N,2}^{E,z}|\lesssim \frac{\epsilon}{\langle t\rangle^2}\Norma{|\nabla|^{s/2}Af_{\sim N}}{L^2}^2.
     \end{equation}
     Finally, on the support of $T_{N,2}^{E,y}$, we have that $|\eta| \approx |\xi|$ and we can apply \eqref{J/Jimproved}. Hence we have that 
     \begin{align*}
     |T_{N,2}^{E,y}|\lesssim \scalar{A|\hf|}{\chi^L\chi^E\mathds{1}_{|l|<100|\xi|}\big(e^{\lambda|\cdot|^s}\huNe*|\cdot|\hf_N\big)},
     \end{align*}
     then just observe that $|l,\xi|\lesssim |\xi|<t^2$. Since $|k,\eta|\approx|l,\xi|$, one has 
     \begin{equation*}
     |l,\xi|\lesssim |l,\xi|^{s}t^{2-2s}\approx|k,\eta|^{s/2}|l,\xi|^{s/2}t^{2-2s},
     \end{equation*}
     hence by previous arguments and bootstrap
     \begin{equation}
     \label{boundTN2Ey}
     \begin{split}
     |T_{N,2}^{E,y}|&\lesssim t^{2-2s} \scalar{|\cdot|^{s/2}A|\hf|}{\chi^L\chi^E\mathds{1}_{|l|<100|\xi|}\big(e^{\lambda|\cdot|^s}\huNe*|\cdot|^{s/2}\hf_N\big)}\\
     &\lesssim \frac{\epsilon}{\langle t \rangle^{2s}}\Norma{|\nabla|^{s/2}Af_{\sim N}}{L^2}^2.
     \end{split}
     \end{equation}
     Here it is important that $s>1/2$ to have integrability in time.\\ \indent
     Combining \eqref{inq:TN1}, \eqref{bound:TN3}, \eqref{boundTN2S}, \eqref{boundTN2D}, \eqref{boundTN2Ez} and \eqref{boundTN2Ey}, we get 
     \begin{equation}
     |T_N|\lesssim \epsilon\bigg(\frac{\lambda+1}{\langle t \rangle^2}+\frac{1}{\langle t \rangle ^{2s}}\bigg)\Norma{|\nabla|^{s/2}Af_{\sim N}}{L^2}^2+\epsilon\bigg \lVert\sqrt{\frac{\dt w}{w}}\tilde{A}f_{\sim N}\bigg \rVert_{L^2}^2 + \frac{\epsilon}{\langle t \rangle^2}\Norma{Af_{\sim N}}{L^2}^2.
     \end{equation}
     Hence by summing up in $N$, thanks to basic properties of Littlewood-Paley decomposition, see \ref{appelitt}, and bootstrap hypothesis, we have that 
     \begin{equation}
     |T|\lesssim \epsilon \bigg(\frac{\lambda+1}{\langle t \rangle^2}+\frac{1}{\langle t \rangle ^{2s}}\bigg)\Norma{|\nabla|^{s/2}Af}{L^2}^2+\epsilon\bigg \lVert\sqrt{\frac{\dt w}{w}}\tilde{A}f\bigg \rVert_{L^2}^2 + \frac{\epsilon^3}{\langle t \rangle^2},
     \end{equation} 
     hence $\lambda$ as in \eqref{deflambda}, namely
     \begin{equation*}
     \dot{\lambda}(t)=-\frac{\delta_\lambda}{\langle t \rangle^{2s}}(\lambda(t)+1)
     \end{equation*}
     we prove proposition \ref{proptransport}. \qed
     
     \section{Reaction term}
     The reaction term is the most challenging one. In fact the weight was built to predict the worst possible case of it. \\
     In the treatment of this term, we have major simplifications due to the assumption of zero mean. In fact $\Delta_L^{-1}$ is always well defined, and we do not have remainders terms created by the change of variables. \\ \indent
     So, recall that we keep the variables $(l,\xi)$ for high frequencies, that now are on $u$ instead of $f$. This is just a change of variable in the convolution on the Fourier side. Writing explicitly $R_N$, we have that 
     \begin{align*}
     R_N=&i\sum_{k\neq l \neq 0} \int_{\eta,\xi} A_k(\eta)\bar{\hf}_k(\eta)A_k(\eta) \hu_l(\xi)_N\cdot (k-l,\eta-\xi)\hf_{k-l}(\eta-\xi)_{<N/8}d\eta d\xi \\
     &-\scalar{A\hf}{\big(\hu_N* |\cdot| (A\hf)_{<N/8}\big)},
     \end{align*}
     Now, recall $\eqref{fu}_2$, i.e. $u=\nabla_{z,y}^\perp \phi$. Observe that $(-\xi,l)\cdot(k-l,\eta-\xi)=(k,\eta)\cdot(-\xi,l)$, which with our notation reads as $|\cdot|^\perp \circledast |\cdot|=|\cdot|\cdot (|\cdot|^\perp \circledast 1)$. The previous equality essentially is Leibniz rule, since $\div \nabla^\perp=0$.\\
     Hence, $R_N$ can be written as 
     \begin{align*}
     R_N=&\scalar{A\hf}{|\cdot|\cdot A\big(|\cdot|^\perp  \hphi_N* \hfNe\big)}\\
     &-\scalar{A\hf}{\big(\hu_N* |\cdot| (A\hf)_{<N/8}\big)}:=R_{N}^1+R_N^3,
     \end{align*}
     where we denote as $R_N^3$ the second term to keep the notation of \cite[Section 6]{BM}. The main contribution is given when derivatives hits the velocity, hence $R_N^1$. Instead $R_N^3$ is a commutator term which is easy to treat, so let us start with that.
     \subsection{Bound on $R_N^3$} 
     For $R_N^3$ we directly apply \eqref{CS+Young}. Also we use that on the support of the integrand $|k-l,\eta-\xi|\lesssim |l,\xi|$ (recall again that now $|l,\xi|$ are the frequencies for $u$), to conclude that 
     \begin{align*}
     |R_N^3|=& |\scalar{A\hf}{\big(\hu_N* |\cdot| (A\hf)_{<N/8}\big)}|\\
     \lesssim& \scalar{A|\hf|}{\big||\cdot|\hu_N*(A\hf_{<N/8})\big|}\\
     \lesssim & \norma{Af_{\sim N}}{L^2}\norma{u_N}{H^{\sigma-4}}\norma{Af_{<N/8}}{L^2}\\
     \lesssim& \frac{\epsilon}{\langle t\rangle^2}\norma{Af_{\sim N}}{L^2}^2,
     \end{align*}
     where we have used also lemma \ref{ellipticlemma} and bootstrap. Then using Littlewood-Paley decomposition properties and bootstrap again, we infer 
     \begin{equation}
     \label{boundRN3}
     \sum_{N\geq 8}|R_N^3|\lesssim  \frac{\epsilon^3}{\langle t \rangle^2}
     \end{equation}
     which is a term that appears in proposition \ref{propreaction}. \qed
     \subsection{Bound on $R_N^1$}
     Here the multiplier $A$ hits the velocity $u$, or equivalently $\phi$. Since $A$ assign different regularity at different times, due to the presence of the weight $w$, it is natural to split $R_N^1$ to isolate time intervals with different regularities. \\
     In particular, since on the support of the integrand $|k,\eta|\approx|l,\xi|$, it is relevant to see when they are resonant or not, because in resonant intervals we cannot recover integrability in time. So define 
     \begin{align*}
     1&=\mathds{1}_{t\not \in I_{k,\eta}, t \not \in I_{l,\xi}}+\mathds{1}_{t\not \in I_{k,\eta}, t  \in I_{l,\xi}}+\mathds{1}_{t \in I_{k,\eta}, t \not \in I_{l,\xi}}+\mathds{1}_{t\in I_{k,\eta}, t \in I_{l,\xi}}\\
     &:=\chi^{NR,NR}+\chi^{NR,R}+\chi^{R,NR}+\chi^{R,R},
     \end{align*}
     where 'NR' and 'R' stands for \textit{Non-Resonant} and \textit{Resonant} respectively. The first apex refer to frequencies $(k,\eta)$, the second one to $(l,\xi)$. \\
     So rewrite $R_N^1$ as 
     \begin{align*}
     R_N^1&=-\scalar{ \big(\chi^{NR,NR}+\chi^{NR,R}+\chi^{R,NR}+\chi^{R,R}\big)A\hf}{|\cdot|\cdot A\big(|\cdot|^\perp  \hphi_N* \hfNe\big)}\\
     &:=R_{N}^{NR,NR}+R_N^{NR,R}+R_N^{R,NR}+R_N^{R,R}.     
     \end{align*}
     \subsubsection{Bound on $R_N^{NR,NR}$}
     Recall that it always holds $|k,\eta|\approx |l,\xi|$. Then observe that since we are in non resonant intervals, we can use \eqref{J/Jimproved}, hence
     \begin{align*}
     \chi^{NR,NR}A_k(\eta)=& \chi^{NR,NR} A_l(\xi) \frac{J_k(\eta)}{J_l(\xi)}e^{\lambda|k,\eta|^s-|l,\xi|^s}\frac{\langle k,\eta \rangle^\sigma}{\langle l,\xi \rangle^\sigma}\\
     &\lesssim \chi^{NR,NR}A_l(\xi)e^{10\mu|k-l,\eta-\xi|^{1/2}}e^{c\lambda|k-l,\eta-\xi|^s}\\
     & \lesssim \chi^{NR,NR}A\circledast e^{\lambda|\cdot|}
     \end{align*}
     where we have used the usual properties of the exponential \eqref{appexp}.\\
     In the terms of $R_N^1$, it appears also the term $(k,\eta)\cdot(-\xi,l)=\eta l-k\xi$, let us give a bound on that 
     \begin{equation}
     \label{eqcdot}
\begin{split}
     \big||\cdot|\cdot(|\cdot|^\perp \circledast 1)\big|&=|\eta l-k\xi|\leq |\eta(l-k)|+|k(\eta-\xi)|\\
     &\lesssim |k,\eta||k-l,\eta-\xi|\\
     &=\big||\cdot|\cdot(1\circledast |\cdot|)\big|
\end{split}
     \end{equation}
     Then, using previous inequalities, we have that 
     \begin{align*}
     |R_N^{NR,NR}|&=|\scalar{\chi^{NR,NR}A\hf}{A|\cdot|\cdot \big(|\cdot|^{\perp}\hphi_N*\hf_{<N/8}\big)}\\
     &\lesssim\scalar{\chi^{NR,NR}|\cdot|A|\hf|}{\big(A\hphi_N*|\cdot|e^{\lambda|\cdot|}\hf_{<N/8}\big)}\\
     &\approx \scalar{\chi^{NR,NR}|\cdot|^{s/2}A|\hf|}{\big(|\cdot|^{1-s/2}A\hphi_N*|\cdot|e^{\lambda|\cdot|}\hf_{<N/8}\big)}
     \end{align*}
      where we use the fact that $|k,\eta|\approx |l,\xi|^{1-s/2}|k,\eta|^{s/2}$.\\
      So thanks to \eqref{CS+Young}, we conclude that 
      \begin{align*}
      |R_N^{NR,NR}|&\lesssim \Norma{|\nabla|^{s/2}Af_{\sim N}}{L^2}\Norma{\chi^{NR}|\nabla|^{1-s/2}A\phi_N}{L^2}\norma{f}{\mathcal{G^{\lambda,\sigma}}}\\
      &\lesssim \epsilon\Norma{|\nabla|^{s/2}Af_{\sim N}}{L^2}\Norma{\chi^{NR}|\nabla|^{1-s/2}A\phi_N}{L^2},
      \end{align*}
      where in the last one we have used also bootstrap hypothesis. The multiplier $\chi^{NR}$ means that $t$ is not in resonant intervals.\\
      Now we exploit the fact that $\phi=\Delta_L^{-1}f$ to recover some integrability in time. Here it is crucial that we are not in resonant intervals, otherwise we cannot recover any integrability. In particular, we claim that 
      \begin{equation}
      \label{nabla/deltaL}
      \mathds{1}_{t\not \in I_{l,\xi}} \frac{|l,\xi|^{1-s/2}}{l^2+|\xi-lt|^2}\lesssim \frac{|l,\xi|^{s/2}}{\langle t \rangle^{2s}},
      \end{equation}
      if we are able to prove that, then we conclude the bound on $R_N^1$, in fact 
      \begin{equation*}
      \norma{|\nabla|^{1-s/2}A\phi_N}{L^2}=\norma{|\nabla|^{1-s/2}|\Delta_L|^{-1}Af_N}{L^2}\lesssim \frac{1}{\langle t \rangle^{2s}}\norma{|\nabla|^{s/2}Af}{L^2}.
      \end{equation*}
      So the bound on $R_N^1$ becomes 
      \begin{equation}
      \label{boundRNNRNR}
      |R_N^{NR,NR}|\lesssim \frac{\epsilon}{\langle t \rangle^{2s}}\norma{|\nabla|^{s/2}Af_{\sim N}}{L_2}^2.
      \end{equation} 
      Then let us prove \eqref{nabla/deltaL}.
     \begin{proof}[Proof of \eqref{nabla/deltaL}]
     	 In general, outside resonant interval, i.e $t\not \in I_{l,\xi}$, one has $|\xi/l-t|\gtrsim \xi/l^2$, or equivalently $|\xi-lt|\gtrsim \xi/l$ (see definition \eqref{defIketa}). \\
      Now let $\frac{1}{2}|lt|\leq |\xi|\leq 2|lt|$. Here, $|\xi/l-t|\gtrsim t$, hence
      \begin{equation*}
      \frac{|l,\xi|^{1-s}}{l^2+|\xi-lt|^2}\lesssim \frac{|l,\xi|^{1-s}}{l^2+t^2}\lesssim \frac{|l|^{1-s}\langle t\rangle^{1-s}}{l^{1-s}|t|^{1+s}}\lesssim \frac{1}{\langle t \rangle^{2s}},
      \end{equation*}
      where we use the fact that $a^2+b^2\gtrsim a^{1-s}b^{1+s}$, for $s\in(0,1)$ and $a,b$ positive.\\
      If $|\xi|\leq |lt|/2$, one has that $|\xi-lt|\gtrsim |lt|$, hence
      \begin{equation*}
      \frac{|l,\xi|^{1-s}}{l^2+|\xi-lt|^2} \lesssim \frac{|l,\xi|^{1-s}}{|lt|^2}\lesssim \frac{|lt|^{1-s}}{\langle lt\rangle^{1+s}}\lesssim \frac{1}{\langle t \rangle^{2s}}.
      \end{equation*}
      Finally, if $|\xi|\geq 2|lt|$, one has that 
      \begin{equation*}
      \frac{|l,\xi|^{1-s}}{l^2+|\xi-lt|^2} \lesssim\frac{|\xi|^{1-s}}{\langle \xi \rangle^2}\lesssim \frac{1}{\langle t \rangle^{1+s}}\lesssim \frac{1}{\langle t \rangle^{2s}},
      \end{equation*}
      where the last follows since $s<1$. This finishes the proof of \eqref{nabla/deltaL}.
     \end{proof}
 \subsubsection{Bound on $R_N^{NR,R}$}
 In this case we have that $(k,\eta)$ is non resonant and $(l,\xi)$ is resonant. By definition of $I_{l,\xi}$, implies that $|l|^2\lesssim |\xi|$, and since $|l,\xi|\approx N$, this means that $|\xi|\approx N$ and consequently $|\eta|\approx|\xi|$. So, since $(l,\xi)$ are resonant, we are sure that $\dt w_l(t,\xi)\neq  0$. We can rewrite \eqref{dtw/wgenerale} as follows
 \begin{equation}
 \label{pass1RN1NRR}
 1\lesssim\sqrt{\frac{ w_l(t,\xi)}{\dt w_l(t,\xi)}}  \bigg[\sqrt{\frac{\dt w_k(t,\eta)}{w_k(t,\eta)}}+\frac{|k,\eta|^{s/2}}{\langle t \rangle^s}\bigg]\langle \eta- \xi \rangle,
 \end{equation} 
 As done previously we want to exchange $A_k(\eta)$ with $A_l(\xi)$. On the support of the integral we can apply \eqref{J/Jlxi}, hence 
 \begin{align*}
 \chi^{NR,R}A_k(\eta)=&\chi^{NR,R} A_l(\xi) \frac{J_k(\eta)}{J_l(\xi)}e^{\lambda|k,\eta|^s-|l,\xi|^s}\frac{\langle k,\eta \rangle^\sigma}{\langle l,\xi \rangle^\sigma}\\
 \lesssim&\chi^{NR,R}A_l(\xi)e^{11\mu|k-l,\eta-\xi|^{1/2}}e^{c\lambda|k-l,\eta-\xi|^{s}}\frac{l^2(1+|t-\xi/l|)}{|\xi|}\\
 \lesssim&\chi^{NR,R}A_l(\xi)e^{\lambda|k-l,\eta-\xi|^{s}}\frac{w_R(\xi)}{w_{NR}(\xi)}\\
 =&\chi^{NR,R}\big(A\frac{w_R}{w_{NR}}\circledast e^{c'\lambda|\cdot|}\big),
 \end{align*}
 where we have also used the fact that $w_R(\xi)\approx \displaystyle \frac{l^2}{\xi}\bigg(1+|t-\frac{\xi}{l}|\bigg)w_{NR}(\xi)$, and usual properties of the exponential. \\
 Since we want to use \eqref{pass1RN1NRR}, we absorb the coefficient $\langle \eta-\xi\rangle$ in the exponential applied to $\hfNe$. Also, remember that $|k,\eta|\approx|l,\xi|$, so \eqref{eqcdot} is equivalent to $\big||\cdot|^\perp\circledast|\cdot|\big|$ (it is just the original form of $R_N$). So we have that
 \begin{align*}
 |R_N^{NR,R}|&=|\scalar{\chi^{NR,R}A\hf}{A\big(|\cdot|^\perp\hphi_N*|\cdot|\hf_{<N/8}}|\\
 &\lesssim\bigg\langle \chi^{NR,R}\bigg[\sqrt{\frac{\dt w}{w}}+\frac{|\cdot|^{s/2}}{\langle t \rangle^s}\bigg]A\hf,\bigg[\sqrt{\frac{w}{\dt w}}|\cdot|A\frac{w_R}{w_{NR}}\hphi_N\bigg]*e^{\lambda|\cdot|}\hf_{<N/8}\bigg \rangle \\
 &\lesssim \epsilon\bigg(\bigg \lVert \sqrt{\frac{\dt w}{w}} \tilde{A}f_{\sim N}\bigg \rVert_{L^2} +\frac{1}{\langle t \rangle^s}\Norma{|\nabla|^{s/2}Af_{\sim N}}{L_2}\bigg)\bigg \lVert \mathds{1}_{t\in I_{l,\xi}}\sqrt{\frac{ w}{\dt w}}|\nabla|\frac{w_R}{w_{NR}}\tilde{A}\phi_N \bigg \rVert,
 \end{align*}
 where in the last one we have the fact that $|k|<|\eta|$ to exchange $A$ with $\tilde{A}$, inequality \eqref{CS+Young} and bootstrap hypothesis to replace $\norma{f}{\mathcal{G^{\lambda,\sigma}}}$ with $\epsilon$.\\
 Then applying $ab/2\leq a^2+b^2$, we have that 
 \begin{align*}
 |R_N^{NR,R}|\lesssim& \epsilon\bigg \lVert \sqrt{\frac{\dt w}{w}} \tilde{A}f_{\sim N}\bigg \rVert_{L^2}^2 +\frac{\epsilon}{\langle t \rangle^{2s}}\Norma{|\nabla|^{s/2}Af_{\sim N}}{L_2}^2\\
 &+ \epsilon\bigg \lVert \mathds{1}_{t\in I_{l,\xi}}\sqrt{\frac{ w}{\dt w}}|\nabla|\frac{w_R}{w_{NR}}\Delta_L^{-1}\tilde{A}f_N \bigg \rVert_{L^2}^2,
 \end{align*}
 where again we have used zero mean condition, $l\neq0$, to write $\Delta_L^{-1}$.\\
 Now we claim that 
 \begin{equation}
 \label{replacedtww}
 M^{R}:=\mathds{1}_{t\in I_{l,\xi}}\sqrt{\frac{w_l(t,\xi)}{\dt w_l(t,\xi)}}|l,\xi|\frac{w_R(t,\xi)}{w_{NR}(t,\xi)}\frac{1}{l^2+|\xi-tl|^2}\lesssim \sqrt{\frac{\dt w_l(t,\xi)}{w_l(t,\xi)}}.
 \end{equation}
 If it is true, we conclude the estimate for $R_{N}^{NR,R}$ as follows 
 \begin{equation}
 \label{boundRNNRR}
|R_N^{NR,R}|\lesssim \epsilon\bigg \lVert \sqrt{\frac{\dt w}{w}} \tilde{A}f_{\sim N}\bigg \rVert_{L^2}^2 +\frac{\epsilon}{\langle t \rangle^{2s}}\Norma{|\nabla|^{s/2}Af_{\sim N}}{L_2}^2.
 \end{equation}
 Let us prove \eqref{replacedtww}. By definition of $w_R, w_{NR}$ and lemma \ref{lemma:dtw/wapprox}, we have that
 \begin{align*}
 M^R&\lesssim |\xi|\bigg(\sqrt{1+\bigg|t-\frac{\xi}{l}\bigg|}\bigg)\frac{l^2}{|\xi|}\bigg(1+\bigg|t-\frac{\xi}{l}\bigg|\bigg)\frac{1}{l^2(1+|t-\xi/l|^2)}\\
 &\lesssim \frac{1}{(1+|t-\xi/l|)}\approx\sqrt{\frac{\dt w_l(t,\xi)}{ w_l(t,\xi)}}.
 \end{align*}
 So we have proved the bound on $R_N^{NR,R}$. \qed
 \subsubsection{Bound on $R_N^{R,NR}$}
 This term is the most dangerous one, in particular the weight $w$ was built to mimic its growth. First of all, $k,\eta$ are resonant and $l,\xi$ non resonant, but can happen that $k,\xi$ is also resonant ($k\neq l$). So it is natural to split again $R_N^{R,NR}$ in the following way 
 \begin{align*}
 R_N^{R,NR}=&\scalar{\chi^{R,NR}\big(\mathds{1}_{t\in I_{k,\xi}}+\mathds{1}_{t\not \in I_{k,\xi}}\big)A\hf}{A\big(|\cdot|^\perp \hphi_N*|\cdot|\hf_{<N/8}}\\
 :=&R_N^{R,NR:D}+R_N^{R,NR;E},
 \end{align*}
 where D stands for 'difficult' and E for 'easy'.\\
 As always, the main point is exchanging $J_k(\eta)$ with $J_l(\xi)$, where one pays different prices depending on which situation you are. \\ \indent
 Let us start with $|R_N^{R,NR;D}|$. First of all, on the support of the integrand $|\eta|\approx|\xi|$. We have to use \eqref{J/Jcap}, so, omitting time cut off functions, we have that 
 \begin{align*}
  A_k(\eta)&=A_l(\xi) \frac{J_k(\eta)}{J_l(\xi)}e^{\lambda|k,\eta|^s-|l,\xi|^s}\frac{\langle k,\eta \rangle^\sigma}{\langle l,\xi \rangle^\sigma}\\
  &\lesssim A_l(\xi) \frac{|\eta|}{k^2}\sqrt{\frac{\dt w_k(t,\eta)}{w_k(t,\eta)}}\sqrt{\frac{\dt w_l(t,\xi)}{w_l(t,\xi)}} e^{\lambda|k-l,\eta-\xi|^s}
 \end{align*}
 where as always we have absorbed all the exponential terms in the one that remain. Now, since $|\eta| \approx |\xi|$, and in general holds that $\displaystyle \frac{l^2}{k^2}\leq \langle k-l\rangle^2$, we have that
 \begin{align*}
 \chi^{R,NR}\mathds{1}_{t\in I_{k,\xi}}A_k(\eta)\lesssim \chi^{R,NR}\mathds{1}_{t\in I_{k,\xi}} \sqrt{\frac{\dt w}{w}}\bigg(\frac{|\dy|}{|\dz|^2}\sqrt{\frac{\dt w}{w}}A\circledast\langle \cdot \rangle^2 e^{\lambda|\cdot|}\bigg).
 \end{align*} 
 So we proceed as follows
 \begin{align*}
 |R_N^{R,NR;D}|&=\big|\scalar{\mathds{1}_{t\in I_{k,\xi}}\chi^{R,{NR}}A\hf}{A\big(|\cdot|^\perp \hphi_N*|\cdot|\hfNe\big)}\big|\\
 &\lesssim \bigg \langle\mathds{1}_{t\in I_{k,\xi}}\chi^{R,NR}\sqrt{\frac{\dt w}{w}}A|\hf|, \bigg|\frac{|\dy|}{|\dz|^2}\sqrt{\frac{\dt w}{w}}A\hphi_N*\langle\cdot \rangle^\sigma e^{\lambda|\cdot |^s}\hfNe\bigg| \bigg\rangle\\
 &\lesssim \bigg \lVert \sqrt{\frac{\dt w}{w}}\tilde{A}f_{\sim N}\bigg \rVert_{L^2}\bigg \lVert \mathds{1}_{t\not \in I_{l,\xi}}\frac{|\dy|}{|\dz|^2}\Delta_L^{-1}\sqrt{\frac{\dt w}{w}}  \tilde{A}f_N \bigg \rVert_{L^2} \norma{f}{\mathcal{G^{\lambda,\sigma}}},
  \end{align*}
 where we have used \eqref{CS+Young} and the fact that on the support of the integrand $|k|\lesssim |\eta|, |l|\lesssim |\xi|$ to replace $A$ with $\tilde{A}$. Now we claim that 
 \begin{equation}
 \label{claimNR}
 \mathds{1}_{t\not \in I_{l,\xi}}\frac{|\xi|}{|l|^2}\frac{1}{l^2+|\xi-lt|^2}\lesssim 1.
 \end{equation}
 To prove \eqref{claimNR}, just observe that we are in non resonant interval for $(l,\xi)$, then $|\xi-lt|\gtrsim \xi/l$, hence the claim follows.\\
 So we have completed the bound on $R_N^{R,NR;D}$, in fact thanks to bootstrap we conclude that 
 \begin{equation}
 \label{boundRNRNRD}
 |R_N^{R,NR;D}|\lesssim \epsilon\bigg \lVert \sqrt{\frac{\dt w}{w}}\tilde{A}f_{\sim N}\bigg \rVert_{L^2}^2.
 \end{equation}
To bound $R_N^{R,NR;E}$, we can apply directly \eqref{J/Jimproved} to get that 
\begin{equation*}
\chi^{R,NR}\mathds{1}_{t\not \in I_{k,\xi}}A_k(\eta)\lesssim \chi^{R,NR}\mathds{1}_{t\not \in I_{k,\xi}}\big(A\circledast e^{\lambda|\cdot|^s}\big).
\end{equation*}
Using also that $|l,\xi|\approx |k,\eta|$, we get that 
\begin{align*}
|R_N^{R,NR;E}|&=\big|\scalar{\chi^{R,NR}\mathds{1}_{t\not \in I_{k,\xi}}A\hf}{A\big(|\cdot|^\perp \hphi_N*|\cdot|\hf_{<N/8}}\big|\\
&\lesssim \scalar{\chi^{R,NR}\mathds{1}_{t\not \in I_{k,\xi}}A|\cdot|^{s/2}|\hf|}{\big||\cdot|^{1-s/2}A \hphi_N*|\cdot|^\sigma e^{\lambda|\cdot|^s}\hf_{<N/8}\big|}\\
&\lesssim \epsilon \Norma{|\nabla|^{s/2}Af_{\sim N}}{L^2}\Norma{\chi^{NR}|\nabla|^{1-s/2}\Delta_L^{-1}Af_N}{L^2},
\end{align*}
where we have also used bootstrap hypothesis. But it appears exactly the same term as in the $R_N^{NR,NR}$, hence using \eqref{nabla/deltaL}, we infer that 
\begin{equation}
\label{boundRNRNRE}
|R_N^{R,NR;E}|\lesssim \epsilon \Norma{|\nabla|^{s/2}Af_{\sim N}}{L^2}^2.
\end{equation}
Putting together \eqref{boundRNRNRD} and \eqref{boundRNRNRE}, we have that
\begin{equation}
\label{boundRNRNR}
|R_N^{R,NR}|\lesssim \epsilon\bigg \lVert \sqrt{\frac{\dt w}{w}}\tilde{A}f_{\sim N}\bigg \rVert_{L^2}^2+\epsilon \Norma{|\nabla|^{s/2}Af_{\sim N}}{L^2}^2.
\end{equation}
With this bound we have completed the bound for $R_N^{R,NR}$. \qed
\subsubsection{Bound on $R_N^{R,R}$}
Finally it remains to bound the last term of $R_N^1$, namely $R_N^{R,R}$. Here both $(k,\eta)$ and $(l,\xi)$ are resonant. Let us rewrite $R_N^{R,R}$ for clarity 
\begin{equation*}
R_N^{R,R}=\scalar{\chi^{R,R}A\hf}{A\big(|\cdot|^\perp \hphi_N*|\cdot|\hfNe\big)}
\end{equation*}
where we have used the original formulation given in $R_N$ (or Leibniz rule). In this case, ignoring the perp since everything will be with absolute values, we have that 
\begin{align*}
\chi^{R,R}A(|\cdot|\circledast |\cdot|)&=\chi^{R,R}A_k(\eta)(|l,\xi||k-l,\eta-\xi|)\\
&=\chi^{R,R}A_l(\xi)\frac{J_k(\eta)}{J_l(\xi)}e^{\lambda|k,\eta|^s-|l,\xi|^s}\frac{\langle k,\eta \rangle^\sigma}{\langle l,\xi \rangle^\sigma}|l,\xi||k-l,\eta-\xi|\\
&\lesssim \chi^{R,R}A_l(\xi)e^{c\lambda|k-l,\eta-\xi|^s}|l,\xi||k-l,\eta-\xi|\frac{J_k(\eta)}{J_l(\xi)},
\end{align*}
where we have used \eqref{appexp}.
Now since we are in resonant intervals for both, by definition of $w_R$, we have that 
\begin{align*}
\chi^{R,R}\frac{J_k(\eta)}{J_l(\xi)}&\approx\chi^{R,R} \frac{\frac{k^2}{\eta}(1+|t-\eta/k|)}{\frac{l^2}{\xi}(1+|t-\xi/l|)}\frac{w_{NR,k}(t,\eta)}{w_{NR,l}(t,\xi)}\\
&\lesssim\chi^{R,R}\frac{\frac{k^2}{\eta}(1+|t-\eta/k|)}{\frac{l^2}{\xi}(1+|t-\xi/l|)} e^{10\mu|k-l,\eta-\xi|^{1/2}}\\
&\lesssim \chi^{R,R}\frac{\xi}{l^2(1+|t-\xi/l|)}e^{10\mu|k-l,\eta-\xi|^{1/2}}\\
&=\chi^{R,R}\frac{w_{NR}(t,\xi)}{w_R(t,\xi)}e^{10\mu|k-l,\eta-\xi|^{1/2}}.
\end{align*} 
where in the last inequality we have used the fact that in resonant interval $I_{k,\eta}$, it holds that $|t-\eta/k|\lesssim \eta/k^2$. Now we have to use the trichotomy lemma \ref{trichotomy}. In particular, if \textit{(b)} holds, i.e. $|t-\xi/l|\gtrsim \xi/l^2$, then 
\begin{equation}
\label{RRwNR/wR}
\chi^{R,R}\frac{w_{NR}(t,\xi)}{w_R(t,\xi)}\lesssim 1
\end{equation}
If instead \textit{(c)} holds, then we have that $\xi/l\lesssim |\eta-\xi|$ (in the lemma we can exchange the role of $\eta$ and $\xi$ without any problem). And also in this case we have \eqref{RRwNR/wR}. \\
So we obtain that 
\begin{equation}
\label{chiRRJ/J}
\chi^{R,R}\frac{J_k(\eta)}{J_l(\xi)}\lesssim\chi^{R,R} e^{10\mu|k-l,\eta-\xi|^{1/2}}.
\end{equation}
Now, we cannot hope to gain integrability by the $\Delta_L^{-1}$, since in resonant interval the time does not play any role. But thanks to \eqref{RRwNR/wR}, we can essentially multiply by one to absorb the term $R_N^{R,R}$ in the $CK_w$. In fact just rewrite \eqref{chiRRJ/J} as follows 
\begin{align*}
\chi^{R,R}\frac{J_k(\eta)}{J_l(\xi)}&\lesssim \chi^{R,R} \frac{w_{NR}(t,\xi)}{w_{R}(t,\xi)}\frac{w_{R}(t,\xi)}{w_{NR}(t,\xi)}e^{10\mu|k-l,\eta-\xi|^{1/2}}\\
&\lesssim \chi^{R,R}\frac{w_{R}(t,\xi)}{w_{NR}(t,\xi)}\sqrt{\frac{\dt w_k(t,\eta)}{w_k(t,\eta)}}\sqrt{\frac{ w_l(t,\xi)}{\dt w_l(t,\xi)}} e^{11\mu|k-l,\eta-\xi|^{1/2}},
\end{align*}
where we have used \eqref{RRwNR/wR} and \eqref{dtw/wbuono}, absorbing Sobolev regularity in the exponential, see \eqref{appsobexp}. Notice also that we have divided by $\dt w_l(\xi)$, but since we are in resonant interval we know that $\dt w_l(\xi)\neq0$.\\
So putting together previous estimates, we infer that
\begin{align*}
\chi^{R,R}A(|\cdot|\circledast|\cdot|) &\lesssim \chi^{R,R}A_l(\xi)e^{c\lambda|k-l,\eta-\xi|^s}|l,\xi||k-l,\eta-\xi|\frac{J_k(\eta)}{J_l(\xi)}\\
&\lesssim \chi^{R,R}\sqrt{\frac{\dt w}{w}}\bigg(\sqrt{\frac{w}{\dt w}}\frac{w_R}{w_{NR}}|\cdot|A\circledast |\cdot| e^{\lambda|\cdot|^s}\bigg).
\end{align*}
Then, we are ready to bound $R_N^{R,R}$, in fact we have that 
\begin{align*}
|R_N^{R,R}|&=\big|\scalar{\chi^{R,R}A\hf}{A\big(|\cdot|^\perp \hphi_N*|\cdot|\hfNe\big)}\big|\\
&\lesssim \bigg \langle\chi^{R,R}\sqrt{\frac{\dt w}{w}} A|\hf|,\bigg(\sqrt{\frac{ w}{\dt w}}\frac{w_R}{w_{NR}}|\cdot|A|\hphi|_N*|\cdot|e^{\lambda|\cdot|^s}|\hf|_{<N/8}\bigg) \bigg \rangle\\
&\lesssim \Normabigg{\sqrt{\frac{\dt w}{w}}\tilde{A} f_{\sim N}}{L^2}\Normabigg{\mathds{1}_{t\in I_{l,\xi}}\sqrt{\frac{w}{\dt w}}\frac{w_R}{w_{NR}}|\nabla|\Delta_L^{-1}\tilde{A}f_N}{L^2}\Norma{f}{\mathcal{G^{\lambda,\sigma}}},
\end{align*}
where we have used \eqref{CS+Young} and the fact that $|k|\lesssim |\eta|$ and $|l|\lesssim |\xi|$ to substitute $A$ with $\tilde{A}$. Now observe that the middle term of the last inequality, it is exactly the same that appear in $R_N^{NR,R}$, hence applying \eqref{replacedtww} and bootstrap hypothesis, we have that
\begin{equation}
\label{boundRNRR}
|R_N^{R,R}|\lesssim \epsilon\Normabigg{\sqrt{\frac{\dt w}{w}}\tilde{A} f_{\sim N}}{L^2}^2,
\end{equation} 
proving the bound on $R_N^{R,R}$, hence putting together \eqref{boundRNNRNR}, \eqref{boundRNNRR}, \eqref{boundRNRNR} and \eqref{boundRNRR}, we have proved that 
\begin{equation}
\label{boundRN1}
|R_N^1|\lesssim \epsilon\Normabigg{\sqrt{\frac{\dt w}{w}}\tilde{A} f_{\sim N}}{L^2}^2 +\frac{\epsilon}{\langle t \rangle^{2s}}\Norma{|\nabla|^{s/2}Af_{\sim N}}{L^2}^2.
\end{equation}
\qed \\
Finally, putting together \eqref{boundRN3} and \eqref{boundRN1}, with standard Littlewood-Paley decomposition properties, see \ref{appelitt} we have proved proposition \ref{propreaction}. \qed
\section{Remainder term}
For the remainder term, the commutator does not help, so we treat the two terms separately. So we have to deal with 
\begin{align*}
\mathcal{R}_{N,N'}&=\scalar{Af}{A\big(u_N\cdot \nabla f_{N'}\big)}-\scalar{Af}{u_N\cdot \nabla Af_{N'}}\\
&:=\mathcal{R}_{N,N'}^1+\mathcal{R}_{N,N'}^2.
\end{align*}
On the support of the integrand we have that $|k-l,\eta-\xi|\approx|l,\xi|$. This implies also that $|k,\eta|\lesssim |l,\xi|$. In addition, for some $c\in(0,1)$, see \eqref{imptri}, it holds that
\begin{equation*}
|k,\eta|^s\leq c|k-l,\eta-\xi|^s+c|l,\xi|^s.
\end{equation*}
So let us start with the first term. Here it is enough to see how the multiplier $A$ distribute among $u_N$ and $f_{N'}$, in particular on the support of the integrand we have that 
\begin{align*}
A_k(\eta)&=e^{\lambda|k,\eta|^s}\langle k,\eta \rangle^{\sigma} J_k(\eta)\\
&\lesssim e^{c\lambda|k-l,\eta-\xi|^s}e^{c\lambda|l,\xi|^s}\langle k-l,\eta -\xi \rangle^{\sigma/2-1}\langle l,\xi \rangle^{\sigma/2+1}e^{20\mu|k,\eta|^{1/2}} \\
&\lesssim \langle \cdot \rangle^{\sigma/2+1}e^{\lambda |\cdot|^s}\circledast \langle \cdot \rangle^{\sigma/2-1}e^{\lambda|\cdot|^s},
\end{align*}
where in the last line we have used \eqref{appexp} to absorb exponential terms with $\mu$. Then we have directly that 
\begin{align*}
|\mathcal{R}_{N,N'}^1|&=\big|\scalar{A\hat{f}}{A\big(\hat{u}_N* |\cdot|\hat{f}_{N'} \big)}\big| \\
&\lesssim \scalar{A|\hat{f}|}{\big(\langle\cdot \rangle^{\sigma/2+1}e^{\lambda|\cdot|^s}|\hat{u}_{N}|*\langle\cdot \rangle^{\sigma/2}e^{\lambda|\cdot|^s}|\hat{f}_{N'}|\big)}\\
&\lesssim \norma{Af_{\sim_N}}{L^2}\norma{u_{\sim_N}}{\mathcal{G^{\lambda,\sigma-\text{3}}}}\norma{f_{\sim N}}{\mathcal{G^{\lambda,\sigma}}},
\end{align*}
where we have used \eqref{CS+Young}. Hence, thanks to bootstrap hypothesis and lemma \ref{ellipticlemma}, we infer that 
\begin{equation*}
|\mathcal{R}_{N,N'}^1|\lesssim \frac{\epsilon^3}{\langle t \rangle^{2}}.
\end{equation*}
The same estimate, with essentially the same technique, is obtained for $\mathcal{R}_{N,N'}^2$, hence after summing up all frequencies, and thanks to standard Litllewood-Paley decomposition properties, we have that
\begin{equation}
|\mathcal{R}|\lesssim \frac{\epsilon^3}{\langle t \rangle^2},
\end{equation}
hence proving proposition \ref{propremainders}. \qed
\subsection*{Conclusions}
To summarize, the proof is essentially the same as \cite{BM}, with simplifications in the number of terms to control, the possibility of being not very careful in elliptic regularity and without taking care about the change of coordinates. Anyway the ideas and the mathematical techniques are the same of \cite{BM}, we have just simplified (very little) some estimates for some Fourier multiplier that can work also in the general case.\\ \indent
We stress again that this note should be considered useful as a first approach to those techniques, in order to simplify the reading of \cite{BM} and related works.
\subsection*{Acknowledgements}
The author acknowledge Jacob Bedrossian, Michele Coti Zelati, Paolo Antonelli and Pierangelo Marcati for discussion about the problem and feedbacks on this note.  
        \appendix 
        \section{}
        \subsection{Littlewood-Paley decomposition}\label{appelitt}
        Here we recall the basic properties of the Littlewood-Paley decomposition and paraproducts, since are necessary throughout all the proof. For more details see \cite{bahouri2011fourier}. \\
        Consider as Fourier variable $\xi \in \mathbb{Z}^n\times \mathbb{R}^n$. Let $\chi$ a smooth cut-off function such that $\chi(\xi)=1$ for $|\xi|\leq 1/2$, and $\chi(\xi)=0$ for $|\xi|\geq 3/4$. Then $\varphi(\xi):=\chi(\xi/2)-\chi(\xi)$ is a smooth cut-off supported on the annulus $\{1/2\leq |\xi| \leq 3/2 \}$. Then in general define $\varphi_N(\xi)= \varphi(N^{-1}\xi)$, with the following support
        \begin{equation*}
        \text{supp} (\varphi_N)=\{N/2\leq|\xi|\leq 3N/2 \}
        \end{equation*}
        for $N\in \mathbf{D}$, the dyadic integers. Then one has that 
        		\begin{equation*}
        		 \chi(\xi)+\lim_{N'\to \infty}\sum_{N=1}^{N'}\varphi_N(\xi)=\lim_{N'\to \infty} \chi(\xi/N')=1,
        		\end{equation*}
        		hence we have a partition of unity. So given a function $g\in L^2(\mathbb{T}^n\times \mathbb{R}^n)$, we define 
        		\begin{align*}
        		g_N&:=\mathcal{F}^{-1}\big(\varphi_N \hat{g}\big),\\
        		g_{\frac{1}{2}}&:=\mathcal{F}^{-1}\big(\chi \hat{g}\big),\\
        		g_{<N}&:=g_{\frac{1}{2}}+\sum_{N'\in \mathbf{D}: N'<N}g_{N'},
        		\end{align*}
        		which means that we cut the functions on the frequency space. By linearity of the Fourier transform and Plancharel then it holds 
        		\begin{equation*}
        		g=\sum_{N\in \mathbf{D}}g_N,
        		\end{equation*}
        		where clearly it is used also that we have a partition of unity. So this is not a projection, since the support of the cut-off intersect, but we have the almost projection property, namely 
        		\begin{align*}
        		\norma{g}{L^2}^2&\approx \sum_{N\in \mathbf{D}}\norma{g_N}{L^2}^2
        		\\ \norma{g_N}{L^2} &\approx \norma{(g_N)_N}{L^2}.
        		\end{align*}
        	Also it is useful to define the following 
        	\begin{equation*}
        	g_{\sim N}=\sum_{N'\in \mathbf{D}:\ cN\leq N'\leq CN}g_{N'},
        	\end{equation*}
        	and clearly one has that 
        	\begin{equation*}
        	\norma{g_N}{L^2}\leq \norma{g_{\sim N}}{L^2}.
        	\end{equation*}
        	The Littlewood-Paley decomposition is very useful also for the following property 
        	\begin{equation*}
        	\norma{\nabla g_N}{L^2}\approx N\norma{g_N}{L^2}.
        	\end{equation*}
        	Finally, the paraproduct decomposition is just a multiplication by $1$, thanks to the previous partition of unity, and some rearrangement we obtain that, for any functions $f,g \in L^2(\mathbb{T}^n\times \mathbb{R}^n)$, it holds
        	\begin{equation*}
        	fg=\sum_{N\leq 8}f_{N}g_{<N/8}+\sum_{N\leq 8} f_{<N/8}g_{N}+\sum_{N}\sum_{N/8\leq N'\leq 8N}f_Ng_{N'}.
        	\end{equation*}
        	\subsection{Useful inequalities}
        	Here we recall some basic inequality but that are very useful for the proof.
        	\begin{lemma}
        		\label{lemmaconc} Let $0<s<1$ and assume, without loss of generality, that $x, y\geq 0$. Then 
        		\begin{description}
        			\item[(i)] In general we have the triangle inequalities 
        			\begin{align}
        			\label{triineq} \langle x+y\rangle^s &\leq \langle x \rangle^s+\langle y \rangle^s,\\
        			|\langle x \rangle^s-\langle y \rangle^s| &\leq \langle x-y\rangle^s,\\ 
        			C_s\big(\langle x \rangle^s+\langle y \rangle^s\big)&\leq  \langle x+y\rangle^s,
        			\end{align}
        			for some $C_s>0$ depending only on $s$.
        			\item[(ii)] In general  
        			\begin{equation}
        			\label{concconc}
        			\big|\langle x\rangle ^s-\langle y\rangle^s\big|\lesssim_s \frac{1}{\langle x \rangle^{1-s}+\langle y \rangle^{1-s}}\langle x-y\rangle.
        			\end{equation}
        			\item[(iii)] If $|x-y|\leq x/C$ for some $C>1$ then we have the improved triangular inequality
        			\begin{equation}
        			\label{imprevtri}
        			|\langle x \rangle^s-\langle y \rangle^s| \leq \frac{s}{(C-1)^{1-s}}\langle x-y\rangle^s.
        			\end{equation}
        			\item[(iv)] If $x\geq y$ we have also another improved triangular inequality 
        			\begin{equation}
        			\label{imptri}
        			\langle x+y\rangle^s \leq \bigg(\frac{\langle x \rangle}{\langle x \rangle+\langle y \rangle}\bigg)^{1-s}\big(\langle x \rangle^s+\langle y \rangle^s\big)
        			\end{equation}
        		\end{description}
        	\end{lemma}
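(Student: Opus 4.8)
\emph{Preliminary reductions.} The plan is to derive the whole statement from two elementary facts about pure powers together with the comparison $\max(\langle x\rangle,\langle y\rangle)\le\langle x+y\rangle\le\langle x\rangle+\langle y\rangle$ (valid for $x,y\ge0$). The left inequality is immediate from $\langle x+y\rangle^2=1+(x+y)^2\ge1+x^2$; the right one follows by squaring, since $\langle x\rangle\langle y\rangle=\sqrt{(1+x^2)(1+y^2)}\ge1+xy$ (this is just $(x-y)^2\ge0$), so $(\langle x\rangle+\langle y\rangle)^2\ge3+(x+y)^2\ge\langle x+y\rangle^2$. The two power facts, for $0<s\le1$ and $a\ge b\ge0$, are
\[
\text{(P1)}\quad (a+b)^s\le a^s+b^s,\qquad\text{(P2)}\quad (a+b)^s\le\Bigl(\tfrac{a}{a+b}\Bigr)^{1-s}\bigl(a^s+b^s\bigr).
\]
I would prove (P1) by writing $a^s+b^s=(a+b)^s\bigl(t^s+(1-t)^s\bigr)$ with $t=a/(a+b)\in[0,1]$ and using $t^s\ge t$; and (P2) by multiplying both sides by $(a+b)^{1-s}$, reducing it to $a+b\le a+a^{1-s}b^s$, i.e.\ to $b^{1-s}\le a^{1-s}$.

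\emph{Items (i) and (iv).} The first inequality of (i) is $\langle x+y\rangle^s\le(\langle x\rangle+\langle y\rangle)^s\le\langle x\rangle^s+\langle y\rangle^s$ by the comparison and (P1). For the second, assume $x\ge y$ and apply the first to the nonnegative numbers $y$ and $x-y$ (so that $y+(x-y)=x$), giving $\langle x\rangle^s\le\langle y\rangle^s+\langle x-y\rangle^s$. For the third, assume $x\ge y$: then $\langle x+y\rangle^s\ge\langle x\rangle^s\ge\tfrac12(\langle x\rangle^s+\langle y\rangle^s)$, so $C_s=\tfrac12$ suffices. For (iv), assume $x\ge y$ (hence $\langle x\rangle\ge\langle y\rangle$) and combine $\langle x+y\rangle\le\langle x\rangle+\langle y\rangle$ with (P2) applied to $a=\langle x\rangle$, $b=\langle y\rangle$.

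\emph{Items (ii) and (iii).} Here the only analytic input is a bound on $g(t):=\langle t\rangle^s=(1+t^2)^{s/2}$: since $g'(t)=s\,t\,\langle t\rangle^{s-2}\le s\,\langle t\rangle^{s-1}$ for $t\ge0$ and $\langle t\rangle^{s-1}$ is decreasing, integrating yields
\[
\bigl|\langle x\rangle^s-\langle y\rangle^s\bigr|\le s\,\langle\min(x,y)\rangle^{s-1}\,|x-y|,\qquad x,y\ge0.
\]
For (iii): the hypothesis $|x-y|\le x/C$ forces $\min(x,y)\ge\tfrac{C-1}{C}x\ge(C-1)|x-y|$, so
\[
s\,\langle\min(x,y)\rangle^{s-1}|x-y|=s\,|x-y|^s\Bigl(\tfrac{|x-y|}{\langle\min(x,y)\rangle}\Bigr)^{1-s}\le\tfrac{s}{(C-1)^{1-s}}\langle x-y\rangle^s,
\]
using $\langle\min(x,y)\rangle\ge\min(x,y)\ge(C-1)|x-y|$ and $|x-y|\le\langle x-y\rangle$. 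For (ii), assume $x\ge y$ and split: if $2y\le x$ then $x-y\ge x/2$, so $\langle x-y\rangle\ge\tfrac12\langle x\rangle$ and $\langle y\rangle\le\langle x\rangle$, whence $\bigl|\langle x\rangle^s-\langle y\rangle^s\bigr|\le\langle x\rangle^s\lesssim\langle x-y\rangle\langle x\rangle^{s-1}\lesssim\langle x-y\rangle/(\langle x\rangle^{1-s}+\langle y\rangle^{1-s})$; if $2y>x$ then $\langle x\rangle\approx\langle y\rangle\approx\langle\min(x,y)\rangle$ and the displayed bound gives $\bigl|\langle x\rangle^s-\langle y\rangle^s\bigr|\lesssim_s\langle\min(x,y)\rangle^{s-1}\langle x-y\rangle\lesssim\langle x-y\rangle/(\langle x\rangle^{1-s}+\langle y\rangle^{1-s})$.

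\emph{Main difficulty.} None of the steps is deep. The one point that genuinely requires care is (ii): a plain mean value estimate is insufficient when $x\gg y$, because the intermediate point may be small and then $g'$ is too large --- indeed $|x-y|/\langle y\rangle^{1-s}$ is then of larger order than the right-hand side. One therefore has to separate the regime in which $\langle x\rangle$ and $\langle y\rangle$ are comparable, handled by the derivative bound, from the regime in which $y$ is negligible next to $x$, where one simply discards $\langle y\rangle^s$ and uses $\langle x-y\rangle\gtrsim\langle x\rangle$.
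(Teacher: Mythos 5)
Your proof is correct, and in spirit it tracks the paper's sketch while being considerably more careful. The paper proves the lemma for plain powers and simply remarks that the japanese bracket plays no real role; you make this rigorous at the outset via the two-sided comparison $\max(\langle x\rangle,\langle y\rangle)\le\langle x+y\rangle\le\langle x\rangle+\langle y\rangle$, and then derive (i) and (iv) uniformly from the pure-power inequalities (P1), (P2). Your (P2) is algebraically the same identity the paper uses for (iv), and your derivative bound $|\langle x\rangle^s-\langle y\rangle^s|\le s\,\langle\min(x,y)\rangle^{s-1}|x-y|$ is the tangent-line/concavity estimate the paper uses for (iii). The substantive improvement is (ii): the paper disposes of it with ``essentially the mean value theorem,'' but a bare mean-value estimate gives only $s\,\langle\min(x,y)\rangle^{s-1}|x-y|$, which when $x\gg y$ (e.g.\ $y=0$) is of order $|x|$ and far exceeds the claimed bound $\langle x-y\rangle/(\langle x\rangle^{1-s}+\langle y\rangle^{1-s})\approx\langle x\rangle^{s}$. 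Your split into the comparable regime $x<2y$ (where the derivative bound closes) and the lopsided regime $x\ge 2y$ (where you simply bound the left side by $\langle x\rangle^s$ and use $\langle x-y\rangle\gtrsim\langle x\rangle$) is exactly what is needed, and it is good that you isolated this as the one genuinely delicate point. A trivial slip: in verifying $\langle x+y\rangle\le\langle x\rangle+\langle y\rangle$ you write $(\langle x\rangle+\langle y\rangle)^2\ge 3+(x+y)^2$, whereas the computation actually gives $4+(x+y)^2$; either suffices.
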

        \begin{proof}
        	For the proof the fact that we have the japanese brackets does not play a real role. Then the proof of point $(i)$ it is standard, actually if $s\in \mathbb{Q}$ just follows by Newton's binomial formula. Point $(ii)$ essentially is the mean value theorem. \\
        	Point $(iii)$ comes from concavity properties, and the fact that $y\geq (C-1)x/C$, in fact
        	\begin{equation*}
        	x^s\leq y^s +\frac{s}{y^{1-s}}(x-y)\leq y^s+\frac{Cs}{\big((C-1)x\big)^{1-s}}(x-y),
        	\end{equation*}
        	and using $|x-y|\leq x/C$ we prove \eqref{imprevtri}. \\
        	Point $(iv)$ just follows by observing that 
        	\begin{equation*}
        	|x+y|^s=\frac{|x+y|}{|x+y|^{1-s}}\leq \bigg(\frac{x}{|x+y|}\bigg)^{1-s}(x^s+y^s).
        	\end{equation*}
        \end{proof}
    To relate Gevrey and Sobolev regularity we have the following.
    \begin{lemma}
    	\label{lemmaexp} Let $x\geq0$, then:
    	\begin{description}
    		\item[(i)] Let $\alpha>\beta\geq0$ and $C,\delta>0$, 
    		\begin{equation}
    		\label{appexp} \exp\big(Cx^\beta\big)\leq \exp\bigg(C\big(\frac{C}{\delta}\big)^{\frac{\beta}{\alpha-\beta}}\bigg)\exp(\delta x^\alpha).
    		\end{equation}
    		\item[(ii)] Let $\alpha, \sigma, \delta>0$, then 
    		\begin{equation}
    		\label{appsobexp}
    		\langle x\rangle^\sigma\lesssim \delta^{\frac{\sigma}{\alpha}}\exp\big(\delta x^\alpha\big).
    		\end{equation}
    	\end{description}
    \end{lemma}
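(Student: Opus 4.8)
The plan is to reduce both parts to an elementary one–variable optimization. For \eqref{appexp}, take logarithms: the inequality is equivalent to the pointwise bound
\[
Cx^{\beta}-\delta x^{\alpha}\;\le\;C\Big(\frac{C}{\delta}\Big)^{\frac{\beta}{\alpha-\beta}},\qquad x\ge 0 .
\]
I would study $g(x):=Cx^{\beta}-\delta x^{\alpha}$ on $[0,\infty)$: since $\alpha>\beta\ge 0$, the function $g$ is nonnegative near the origin, tends to $-\infty$ as $x\to\infty$ and is smooth on $(0,\infty)$, so it attains its maximum at the unique interior critical point $x_{\ast}$ with $g'(x_{\ast})=0$, i.e.\ $x_{\ast}^{\alpha-\beta}=\tfrac{C\beta}{\delta\alpha}$. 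Substituting back gives $g(x_{\ast})=Cx_{\ast}^{\beta}\big(1-\tfrac{\beta}{\alpha}\big)=C\big(\tfrac{C\beta}{\delta\alpha}\big)^{\frac{\beta}{\alpha-\beta}}\tfrac{\alpha-\beta}{\alpha}$, and then I use $\beta/\alpha<1$ twice: to bound $\tfrac{\alpha-\beta}{\alpha}\le 1$ and to bound $\big(\tfrac{C\beta}{\delta\alpha}\big)^{\frac{\beta}{\alpha-\beta}}\le\big(\tfrac{C}{\delta}\big)^{\frac{\beta}{\alpha-\beta}}$ (the exponent being nonnegative), which produces exactly the stated constant. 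A calculus–free variant is to apply Young's inequality $a^{\theta}b^{1-\theta}\le\theta a+(1-\theta)b$ with $\theta=\beta/\alpha$, $a=\mu^{1/\theta}x^{\alpha}$, $b=\mu^{-1/(1-\theta)}$, and then pick $\mu$ so that $C\theta\mu^{1/\theta}=\delta$.

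For \eqref{appsobexp}, I would first reduce to a pure power bound using $\langle x\rangle^{\sigma}\lesssim 1+x^{\sigma}$, and then optimize $h(x):=x^{\sigma}e^{-\delta x^{\alpha}}$ over $x\ge 0$; its critical point satisfies $x_{\ast}^{\alpha}=\sigma/(\delta\alpha)$, whence $\sup_{x\ge 0}h(x)=\big(\tfrac{\sigma}{\alpha e}\big)^{\sigma/\alpha}\delta^{-\sigma/\alpha}$ and therefore $x^{\sigma}\lesssim_{\sigma,\alpha}\delta^{-\sigma/\alpha}e^{\delta x^{\alpha}}$, the additive constant being absorbed in the same way. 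Equivalently, one may rescale $y=\delta^{1/\alpha}x$ and combine the trivial bound $\sigma\log y\le C_{\sigma,\alpha}+y^{\alpha}$ with part \eqref{appexp}.

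There is essentially no real difficulty here, both statements being elementary; the only point requiring attention is the bookkeeping of constants, so that the exponent $\tfrac{\beta}{\alpha-\beta}$ in \eqref{appexp} comes out precisely as written, together with the (harmless) dependence of the implicit constant in \eqref{appsobexp} on $\sigma$ and $\alpha$ — the power of $\delta$ appearing there as $\delta^{-\sigma/\alpha}$.
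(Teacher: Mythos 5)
The paper states Lemma \ref{lemmaexp} without proof, so there is nothing in the source to compare against directly; your elementary optimization argument is the natural one and is correct. For \eqref{appexp}, studying $g(x)=Cx^{\beta}-\delta x^{\alpha}$ and evaluating at the critical point $x_{*}^{\alpha-\beta}=C\beta/(\delta\alpha)$ gives $g(x_{*})=C\,\frac{\alpha-\beta}{\alpha}\big(\frac{C\beta}{\delta\alpha}\big)^{\frac{\beta}{\alpha-\beta}}\le C\big(\frac{C}{\delta}\big)^{\frac{\beta}{\alpha-\beta}}$, which is exactly the stated constant; your Young-inequality reformulation gives the same bound without calculus. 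One small edge case you should flag: when $\beta=0$ the function $g$ is strictly decreasing on $(0,\infty)$ and has no interior critical point, so the maximum is attained at $x=0$ rather than at a stationary point — the stated bound $g\le C$ still holds, but the phrase ``attains its maximum at the unique interior critical point'' should be qualified.

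More importantly, you correctly observe that the exponent of $\delta$ in \eqref{appsobexp} as printed is wrong: the optimization $h(x)=x^{\sigma}e^{-\delta x^{\alpha}}$ gives $\sup_{x\ge 0}h=\big(\tfrac{\sigma}{e\alpha}\big)^{\sigma/\alpha}\delta^{-\sigma/\alpha}$, so the correct statement is $\langle x\rangle^{\sigma}\lesssim_{\sigma,\alpha}\delta^{-\sigma/\alpha}e^{\delta x^{\alpha}}$. As written, with $\delta^{+\sigma/\alpha}$, the right-hand side tends to $0$ as $\delta\to 0^{+}$ for fixed $x$ while the left-hand side is bounded below, which is absurd. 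This is evidently a sign typo in the paper; it is harmless in the applications (where $\delta$ is fixed and the $\delta$-dependent constant is absorbed into $\lesssim$), but it should be corrected.
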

    Then we recall some inequalities to deal with convolutions.
        \begin{lemma}
        	Let $f(\cdot), g(\cdot)\in L^2_\xi(\mathbb{R}^d)$, $\langle \cdot \rangle^\sigma h(\cdot) \in L^2_\xi(\mathbb{R}^d)$ and $\langle \cdot \rangle^\sigma b(\cdot)\in L^2_\xi(\mathbb{R}^d)$, for $\sigma>d/2$. Then 
        	\begin{align}
        	\label{Young_inq}
        	\norma{f*h}{L^2}&\lesssim \norma{f}{L^2}\norma{\langle \cdot \rangle^\sigma h}{L^2}\\
        	\label{CS+Young}|\scalar{f}{g*h}|&\lesssim \norma{f}{L^2}\norma{g}{L^2}\norma{\langle \cdot \rangle^\sigma h}{L^2}\\
        	\label{CS+2Young}|\scalar{f}{g*h*b}|&\lesssim \norma{f}{L^2}\norma{g}{L^2}\norma{\langle \cdot \rangle^\sigma h}{L^2}\norma{\langle \cdot \rangle^\sigma b}{L^2}
        	\end{align}
        \end{lemma}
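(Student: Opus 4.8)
The three estimates are all consequences of the classical Young convolution inequality together with one elementary embedding, so the plan is to isolate that embedding first. The key observation is that for $\sigma>d/2$ the function $\langle \cdot \rangle^{-\sigma}$ belongs to $L^2(\mathbb{R}^d)$, since $\int_{\mathbb{R}^d}\langle \xi\rangle^{-2\sigma}d\xi<\infty$ precisely when $2\sigma>d$. Hence, by Cauchy--Schwarz, whenever $\langle \cdot \rangle^\sigma h\in L^2$ one has
\[
\norma{h}{L^1}=\norma{\langle \cdot \rangle^{-\sigma}\langle \cdot \rangle^\sigma h}{L^1}\leq \norma{\langle \cdot \rangle^{-\sigma}}{L^2}\norma{\langle \cdot \rangle^\sigma h}{L^2}\lesssim \norma{\langle \cdot \rangle^\sigma h}{L^2},
\]
with implicit constant depending only on $\sigma$ and $d$. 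This is the only place the hypothesis $\sigma>d/2$ enters.

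Given this, \eqref{Young_inq} is immediate: the standard Young inequality $\norma{f*h}{L^2}\leq \norma{f}{L^2}\norma{h}{L^1}$ combined with the $L^1$ bound above yields $\norma{f*h}{L^2}\lesssim \norma{f}{L^2}\norma{\langle \cdot \rangle^\sigma h}{L^2}$. For \eqref{CS+Young} I would first apply Cauchy--Schwarz to peel off $f$, writing $|\scalar{f}{g*h}|\leq \norma{f}{L^2}\norma{g*h}{L^2}$, and then invoke \eqref{Young_inq} (with $g$ playing the role of $f$) to bound $\norma{g*h}{L^2}\lesssim \norma{g}{L^2}\norma{\langle \cdot \rangle^\sigma h}{L^2}$.

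Finally \eqref{CS+2Young} follows by the same two-step scheme applied once more: Cauchy--Schwarz gives $|\scalar{f}{g*h*b}|\leq \norma{f}{L^2}\norma{g*h*b}{L^2}$, Young's inequality gives $\norma{g*h*b}{L^2}\leq \norma{g}{L^2}\norma{h*b}{L^1}$, and then $\norma{h*b}{L^1}\leq \norma{h}{L^1}\norma{b}{L^1}\lesssim \norma{\langle \cdot \rangle^\sigma h}{L^2}\norma{\langle \cdot \rangle^\sigma b}{L^2}$ by the $L^1\ast L^1\hookrightarrow L^1$ bound and the weighted embedding above applied to both $h$ and $b$. There is no genuinely hard step here; the only substantive point is the weighted $L^2\hookrightarrow L^1$ embedding, for which the restriction $\sigma>d/2$ is sharp, and everything else is a bookkeeping combination of Cauchy--Schwarz and Young.
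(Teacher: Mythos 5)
Your proof is correct and follows essentially the same route as the paper: the weighted $L^2\hookrightarrow L^1$ embedding via Cauchy--Schwarz against $\langle\cdot\rangle^{-\sigma}\in L^2$ (which is where $\sigma>d/2$ is used), followed by Young's inequality and Cauchy--Schwarz for the three estimates. No discrepancy to report.
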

    \begin{proof}
    	Inequality \eqref{Young_inq} is just a standard Young's inequality followed by Cauchy-Schwarz, since 
    	\begin{equation*}
    	\norma{h}{L^1}\leq \norma{\langle \cdot \rangle^{-\sigma}}{L^2}\norma{\langle \cdot \rangle^\sigma h}{L^2}\lesssim \norma{\langle \cdot \rangle^\sigma h}{L^2},
    	\end{equation*}
    	and the last follows by the fact that $\sigma>d/2$. \\
    	The inequality \eqref{CS+Young} is Cauchy-Schwarz plus \eqref{Young_inq}. Finally \eqref{CS+2Young} is Cauchy-Schwarz followed by two applications of Young's inequality.
    \end{proof}

  \bibliographystyle{siam}
  \nocite{*}
\bibliography{bob}
	\end{document}